\theoremstyle{plain}
\newtheorem{thm}{Theorem}[section]
\newtheorem{lem}[thm]{Lemma}
\newtheorem{cor}[thm]{Corollary}
\newtheorem{prop}[thm]{Proposition}
\theoremstyle{definition}
\newtheorem{defn}[thm]{Definition}
\newtheorem{q}[thm]{Question}
\newtheorem{ex}[thm]{Example}
\newtheorem{obs}[thm]{Observation}
\newtheorem{notation}[thm]{Notation}
\numberwithin{equation}{section}
\numberwithin{figure}{section}
\def\w{\omega}
\DeclareMathOperator{\mr}{mr}
\DeclareMathOperator{\mrREAL}{mr^{\mathbb R}}
\DeclareMathOperator{\msr}{mr_+}
\DeclareMathOperator{\msrREAL}{mr^{\mathbb R}_+}
\DeclareMathOperator{\mcr}{mcr}
\DeclareMathOperator{\mcrREAL}{mcr^{\mathbb R}}
\DeclareMathOperator{\mscr}{mcr_{+}}
\DeclareMathOperator{\mscrREAL}{mcr_+^{\mathbb R}}
\DeclareMathOperator{\rank}{rank}
\DeclareMathOperator{\Hqual}{\mathcal{H}}
\DeclareMathOperator{\supp}{supp}
\DeclareMathOperator{\weight}{weight}
\DeclareMathOperator{\Arg}{Arg}
\newcommand{\diag}[1]{\operatorname{diag}\left(#1\right)}
\newcommand{\circulant}[2]{\operatorname{Circ}(#1,#2)}
\renewcommand\implies{\Rightarrow}
\renewcommand\Re{\operatorname{Re}}
\renewcommand\Im{\operatorname{Im}}
\title{The minimum rank problem for circulants}
\author{Louis Deaett}
\email{louis.deaett@quinnipiac.edu}
\address{Department of Mathematics, Quinnipiac University, Hamden, CT 06518}
\author{Seth A.\ Meyer}
\email{seth.meyer@snc.edu}
\address{Department of Mathematics, St.\ Norbert College, De Pere, WI 54115}
\keywords{circulant graphs; circulant matrices; minimum rank problem; minimum semidefinite rank}
\subjclass[2010]{Primary: 05C50. Secondary: 15A03.}
\begin{document}

\begin{abstract} 
The {\it minimum rank problem} is to determine for a graph $G$ the smallest rank of a Hermitian (or real symmetric) matrix whose off-diagonal zero-nonzero pattern is that of the adjacency matrix of $G$.  Here $G$ is taken to be a circulant graph, and only circulant matrices are considered.  The resulting graph parameter is termed the {\it minimum circulant rank} of the graph.  This value is determined for every circulant graph in which a vertex neighborhood forms a consecutive set, and in this case is shown to coincide with the usual minimum rank.
Under the additional restriction to positive semidefinite matrices, the resulting parameter is shown to be equal to the smallest number of dimensions in which the graph has an orthogonal representation with a certain symmetry property, and also to the smallest number of terms appearing among a certain family of polynomials determined by the graph.  This value is then determined when the number of vertices is prime.  The analogous parameter over $\mathbb R$ is also investigated.
\end{abstract}

\maketitle

\section{Introduction}\label{sec:intro}

The location of the off-diagonal nonzero entries of a Hermitian or real symmetric matrix can naturally be specified by a graph.  More formally, we have the following.

\begin{defn}\label{def:graph_of_matrix}
Let $A$ be an $n\times n$ Hermitian matrix and $G$ be a simple graph on
$n$ vertices, say with $V(G)=\{v_1,v_2,\ldots,v_n\}$.  We say that $G$ is the {\it graph} of $A$ if it is the case that $\{v_i,v_j\} \in E(G)$ if and only if $a_{ij} \not= 0$, for all $i,j\in\{1,2,\ldots,n\}$ with $i\not=j$.
\end{defn}

A problem of interest in combinatorial matrix theory is to
determine particular ways in which the graph of a matrix constrains its rank.
Because the diagonal entries of the matrix play no role in Definition \ref{def:graph_of_matrix}, every graph allows a diagonally dominant matrix, so the question of how large the rank may be is not interesting.
On the other hand, to determine the smallest rank among all matrices with a given graph is an interesting problem, known as the {\it minimum rank problem} for graphs.  More formally, the problem is to determine the value of the graph parameter defined as follows.

\begin{notation}\label{not:qual_classes}
Let $G$ be a graph.  We write $\Hqual(G)$ for the set of all complex Hermitian matrices with graph $G$.
\end{notation}

\begin{defn}\label{def:mr}
Let $G$ be a graph.  The {\it minimum rank} of $G$ is
\[ \mr(G) = \min\{\rank(A) : A \in \Hqual(G) \}. \]
\end{defn}

The present work focuses on the case in which $G$ is a circulant graph.  We may then consider the smallest rank among all Hermitian (or real symmetric) circulant matrices whose off-diagonal nonzero entries occur according to the edges of $G$.

A question that naturally arises is:  When $G$ is a circulant {\em graph}, under what conditions is the smallest rank among all Hermitian (or real symmetric) matrices with graph $G$ attained by a circulant  {\em matrix?}
In Section \ref{sec:mcr_for_families} we show that this in fact does occur for at least one broad class of circulant graphs,
namely those in which each vertex neighborhood comprises a consecutive set of vertices.
We also give examples of circulants for which this does not occur, however the problem of providing a complete characterization of such circulants remains open.

We also investigate the problem in the positive semidefinite setting.  First, in Section \ref{sec:orthogonal_representations_and_symmetry}, we show that the problem of determining the smallest rank among all positive semidefinite circulant matrices with a given graph is equivalent to determining the smallest number of dimensions admitting an orthogonal representation for the graph with a specific symmetry property.  Then, in Section \ref{sec:polynomial_connection}, this problem in turn is shown to be equivalent to determining the smallest number of terms in a real polynomial with nonnegative coefficients whose zeros intersect a precise subset, determined by the graph, of the complex roots of unity.  
In Section \ref{sec:mcr_for_families}, this value is determined for two broad classes of circulants.
Finally, Section \ref{sec:results_over_the_reals} develops analogous results over $\mathbb R$.

\section{Preliminaries}\label{prelims}

We begin with some fundamental definitions.  In particular, we need to set out what is meant by a {\it circulant}, in both the sense of a graph and of a matrix, and then establish appropriate connections between the two notions.

\subsection{Circulant graphs}

Intuitively, $G$ is a circulant graph precisely when its vertices may be arranged around a circle such that the presence of an edge between any two vertices is determined entirely by the distance ``around the circle'' from one to the other.  More precisely, we have the following definition.

\begin{defn}\label{def:circulant_graph}
A graph $G$ on $n$ vertices is said to be a {\it circulant graph} if its vertices may be labeled as $v_0,v_1,\ldots,v_{n-1}$ such that there exists a set $S \subseteq \{1,2,\ldots,n-1\}$ with
\[ \{v_i, v_j\} \in E(G) \Longleftrightarrow i-j \equiv k \bmod n \text{ for some } k \in S. \]
\end{defn}

When $v \in V(G)$, we write $N(v)$ for the {\it neighborhood} of $v$, i.e., the set of all vertices adjacent to $v$.  Note that when $G$ is a circulant graph, its entire edge set is determined by the neighborhood of any single vertex.  Hence, it is convenient to specify a circulant graph by giving its number of vertices together with the neighborhood of just one vertex.

\begin{notation}
Let $n$ be a positive integer and $S \subseteq \{1,2,\ldots,n-1\}$ be closed under taking the additive inverse modulo $n$.  By $\circulant{n}{S}$ we denote the unique circulant graph on vertex set $\{0,1,\ldots,n-1\}$ such that $N(0)=S$.  For the sake of convenience, when writing the elements of a specific set $S$, we do not restrict ourselves to integers in $\{1,2,\ldots,n-1\}$, but may instead write other integers that represent the same residues modulo $n$.
\end{notation}

Note that, by definition, $\circulant{n}{S}$ is necessarily regular  of degree $|S|$.

\begin{defn}\label{def:consecutive_circulant}
A graph $G$ is a {\it consecutive circulant} if there exist integers $n$ and $k$ with $1 \le k \le \lfloor n/2 \rfloor$ such that  $G=\circulant{n}{S}$ for $S=\{\pm 1, \pm 2, \ldots, \pm k\}$.
\end{defn}

Note that Definition \ref{def:consecutive_circulant} precludes a graph with no edges; we do not consider the empty graph to be a consecutive circulant.

\begin{ex}
The circulant graphs $\circulant{10}{\{\pm 1,\pm 2, \pm 3\}}$ and $\circulant{6}{\{\pm 2,3\}}$
are shown in Figure \ref{fig:circulant_graph_examples}.  The former is a consecutive circulant.
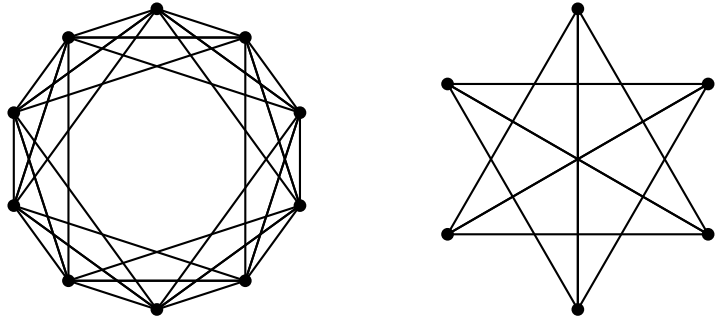
\begin{figure}
\begin{center}
\begin{tabular}{ccc}
\begin{tikzpicture}
\def\n{10};
\foreach \x in {1,...,\n}{
	\pgfmathsetmacro\xcoord{2*(sin((360/(\n)*(\x))))};
	\pgfmathsetmacro\ycoord{2*(cos((360/(\n)*(\x))))};
    \filldraw[thick] (\xcoord,\ycoord) circle (2pt);
    \foreach \e in {1,2,8,7}{
        \pgfmathsetmacro\xecoord{2*(sin((360/(\n)*(\x+\e))))};
        \pgfmathsetmacro\yecoord{2*(cos((360/(\n)*(\x+\e))))};
        \draw[thick] (\xcoord,\ycoord) -- (\xecoord,\yecoord);};};
\end{tikzpicture}
& \hspace{6ex} &
\begin{tikzpicture}
\def\n{6};
\foreach \x in {1,...,\n}{
	\pgfmathsetmacro\xcoord{2*(sin((360/(\n)*(\x))))};
	\pgfmathsetmacro\ycoord{2*(cos((360/(\n)*(\x))))};
    \filldraw[thick] (\xcoord,\ycoord) circle (2pt);
    \foreach \e in {2,3}{
        \pgfmathsetmacro\xecoord{2*(sin((360/(\n)*(\x+\e))))};
        \pgfmathsetmacro\yecoord{2*(cos((360/(\n)*(\x+\e))))};
        \draw[thick] (\xcoord,\ycoord) -- (\xecoord,\yecoord);};};
\end{tikzpicture}
\end{tabular}
\end{center}
\caption{The circulant graphs $\circulant{10}{\{\pm 1,\pm 2,\pm 3\}}$ and $\circulant{6}{\{\pm 2,3\}}$.}
\label{fig:circulant_graph_examples}
\end{figure}
\end{ex}

\subsection{Circulant matrices}
In what follows, the rows and columns of every matrix, as well as the coordinates of every vector, are taken to be zero-indexed.  That is, the coordinates of each $n$-dimensional vector, as well as the rows and the columns of each $n \times n$ matrix will be indexed by the integers $0,1,\ldots,n-1$.  We write $A_{ij}$ for the entry residing in row $i$ and column $j$ of matrix $A$.

Accordingly, given subsets $S$ and $T$ of the set $\{0,1,\ldots,n-1\}$, we write $A[S,T]$ for the submatrix of $A$ that lies on the rows of $A$ with indices in $S$ and the columns with indices in $T$.  When $S=T$, we denote this submatrix simply by $A[S]$.

\begin{defn}
An $n \times n$ matrix $A$ is a {\it circulant matrix} if
there exist values for $b_0,b_1,\ldots,b_{n-1}$
such that
\[ A = \begin{bmatrix}
b_0 & b_1 & b_2 &\cdots & b_{n-2} & b_{n-1} \\
b_{n-1} & b_0 & b_1 & \cdots & b_{n-3} & b_{n-2} \\
b_{n-2} & b_{n-1} & b_0 & \cdots & b_{n-4} & b_{n-3} \\
\vdots & \vdots & \vdots & \ddots & \vdots & \vdots \\
b_2 & b_3 & b_4 & \cdots & b_0 & b_1 \\
b_1 & b_2 & b_3 &\cdots & b_{n-1} & b_0
\end{bmatrix}. \]
\end{defn}

Note that $A$ is a circulant matrix if and only if the value of $A_{ij}$ is a function of the residue of $i-j$ modulo $n$.  For a general reference on circulant matrices and their properties, see the monograph \cite{Davis} or the more recent \cite{KS2012}.

\begin{notation}
In any context in which $n$ is understood to be a positive integer, we write $\w$ for the complex $n$th root of unity $e^{2\pi i/n}$.
\end{notation}

\begin{defn}\label{def:fourier_matrix}
For every positive integer $n$, the $n\times n$ {\it Fourier matrix} $F_n$ is defined by $(F_n)_{ij} = \frac{1}{\sqrt{n}}\w^{ij}$.
At times we wish to neglect the scaling factor of $1/\sqrt n$, and so for convenience of notation we let $\hat F_n = \sqrt nF_n$.
Explicitly, we have
\[ F_n = {\textstyle \frac 1{\sqrt n}}\hat F_n = 
\frac 1{\sqrt n}
\begin{bmatrix}
1 & 1 & 1 & \cdots & 1 \\
1 & \w & \w^2 & \cdots & \w^{n-1} \\
1 & \w^2 & \w^4 & \cdots & \w^{2(n-1)} \\
\vdots & \vdots & \vdots & \ddots & \vdots \\
1 & \w^{n-1} & \w^{2(n-1)} & \cdots & \w^{(n-1)(n-1)}
\end{bmatrix}. \]
\end{defn}

Note that $\hat F_n$ is the Vandermonde matrix of the complex $n$th roots of unity.  Note also that $F_n^\ast = F_n^{-1}$, so that $F_n$ is unitary.  The significance of the Fourier matrix in the context of circulant matrices stems from the fact that $F_n$ diagonalizes every $n\times n$ circulant matrix.

\begin{thm}[see, e.g., {\cite[Theorem 3.2.2]{Davis}}]\label{thm:circulants_diagonalized_by_Fourier}
If $A$ is an $n\times n$ circulant matrix, then $F_n^\ast A F_n$ is diagonal.
\end{thm}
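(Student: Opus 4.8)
The plan is to reduce the diagonalization of an arbitrary circulant to that of a single, maximally simple circulant: the fundamental cyclic shift. First I would introduce the $n \times n$ shift matrix $P$, defined as the circulant matrix obtained by setting $b_1 = 1$ and $b_k = 0$ for all other $k$, so that $P_{ij} = 1$ exactly when $j \equiv i + 1 \pmod n$. A direct check against the entrywise description $A_{ij} = b_{(j-i) \bmod n}$ then shows that an arbitrary circulant $A$ with parameters $b_0, \ldots, b_{n-1}$ satisfies $A = \sum_{k=0}^{n-1} b_k P^k = p(P)$, where $p(x) = \sum_{k=0}^{n-1} b_k x^k$. Thus every circulant is a polynomial in $P$, and it suffices to diagonalize $P$ by $F_n$ and then track what happens to a polynomial in $P$.

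Next I would exhibit the columns of $\hat F_n$ as eigenvectors of $P$. Writing $v_j$ for the $j$th column of $\hat F_n$, so that $(v_j)_i = \w^{ij}$, a one-line computation gives $(P v_j)_i = \w^{(i+1)j} = \w^j (v_j)_i$, hence $P v_j = \w^j v_j$. So the $n$ roots of unity $\w^0, \w^1, \ldots, \w^{n-1}$ are the eigenvalues of $P$, and the normalized Fourier columns form a corresponding orthonormal eigenbasis; this simultaneously re-derives the unitarity of $F_n$ via the orthogonality relation $\sum_i \w^{i(k-j)} = n\,\delta_{jk}$. It follows that $F_n^\ast P F_n = \diag{\w^0, \w^1, \ldots, \w^{n-1}}$.

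Finally I would conclude using the fact that conjugation by the unitary $F_n$ preserves polynomial expressions: since $A = p(P)$,
\[ F_n^\ast A F_n = p(F_n^\ast P F_n) = p\bigl(\diag{\w^0, \ldots, \w^{n-1}}\bigr) = \diag{p(\w^0), \ldots, p(\w^{n-1})}, \]
which is diagonal, as claimed. As an alternative to the polynomial route, one can prove the statement by a single direct computation of $(F_n^\ast A F_n)_{jk}$, substituting $A_{pq} = b_{(q-p)\bmod n}$ and summing first over the index $p$; the inner sum collapses to $n\,\delta_{jk}$ by the same orthogonality relation, leaving $\sum_r b_r \w^{rj}$ on the diagonal. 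I expect no serious obstacle here, as this is a classical fact; the only real care needed is the bookkeeping with zero-indexing and modular arithmetic in the definition of $P$ and the entry formula for $A$, so as to get the direction of the shift and the eigenvalue $\w^j$ (rather than $\w^{-j}$) correct.
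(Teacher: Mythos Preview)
Your argument is correct and is the standard proof of this classical fact: express an arbitrary circulant as a polynomial in the cyclic shift $P$, check that the columns of $F_n$ are eigenvectors of $P$ with eigenvalues $\w^j$, and conclude by functional calculus. The bookkeeping is right; with the paper's convention $A_{ij}=b_{(j-i)\bmod n}$ and $(v_j)_i=\w^{ij}$ one indeed gets $Pv_j=\w^jv_j$ and hence $F_n^\ast A F_n=\diag{p(\w^0),\ldots,p(\w^{n-1})}$.

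There is nothing to compare against here: the paper does not supply its own proof of this theorem but simply quotes it from \cite[Theorem 3.2.2]{Davis}. Your write-up would serve perfectly well as a self-contained replacement for that citation.
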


\subsection{Minimum circulant rank}
As established above, the term ``circulant'' has a distinct meaning as applied to a matrix as opposed to a graph.  When that distinction is clear from the context, however, we will sometimes refer to an object simply as a ``circulant'' for the sake of brevity.

The classical minimum rank problem seeks the smallest rank among all Hermitian (or real symmetric) matrices with the zero-nonzero pattern specified by a given graph.
We wish to consider this problem with its scope restricted to the circulant matrices.
Hence, for a given circulant graph, we seek the smallest rank of a Hermitian or real symmetric circulant matrix whose zero-nonzero pattern is that given by the graph.
That is, we wish to study the following graph parameter analogous to that set out in Definition \ref{def:mr}.

\begin{defn}
Let $G$ be a circulant graph.  The {\it minimum circulant rank} of $G$ is
\[ \mcr(G) = \min\{\rank(A) : A \in \Hqual(G) \text{ and $A$ is a circulant matrix} \}. \]
\end{defn}

The following easy observation shows that $\mcr(G)$ is well-defined.

\begin{obs}
A graph $G$ is a {\it circulant graph} if and only if the adjacency matrix of $G$ can be taken to be a circulant matrix.
\end{obs}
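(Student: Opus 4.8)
The plan is to prove the biconditional directly, handling each direction by appeal to the remark following the definition of a circulant matrix, namely that a matrix $A$ is circulant precisely when the value of $A_{ij}$ depends only on the residue of $i-j$ modulo $n$. This reduces the whole statement to translating between the neighborhood-distance condition in Definition \ref{def:circulant_graph} and this diagonal-constancy condition on the adjacency matrix.

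For the forward direction, I would assume $G$ is a circulant graph and fix a labeling $v_0,v_1,\ldots,v_{n-1}$ of its vertices together with a set $S \subseteq \{1,2,\ldots,n-1\}$ witnessing Definition \ref{def:circulant_graph}. Ordering the vertices accordingly, let $A$ be the resulting adjacency matrix, so that $A_{ij}=1$ exactly when $\{v_i,v_j\}\in E(G)$ and $A_{ij}=0$ otherwise. By the defining property of $S$, we have $A_{ij}=1$ if and only if $i-j\equiv k \bmod n$ for some $k\in S$; in particular the value of $A_{ij}$ is determined by the residue of $i-j$ modulo $n$, so $A$ is a circulant matrix by the stated characterization.

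For the converse, I would suppose that, under some labeling $v_0,v_1,\ldots,v_{n-1}$ of $V(G)$, the adjacency matrix $A$ is circulant. Then $A_{ij}$ depends only on $i-j \bmod n$, so the set $S=\{k\in\{1,\ldots,n-1\} : A_{ij}=1 \text{ whenever } i-j\equiv k \bmod n\}$ is well-defined, and we obtain $\{v_i,v_j\}\in E(G) \iff A_{ij}=1 \iff i-j\equiv k \bmod n$ for some $k\in S$, which is exactly the condition placed on $G$ by Definition \ref{def:circulant_graph}.

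The content of the argument is entirely bookkeeping; the only point requiring care, and hence the main (if mild) obstacle, is to confirm that the set $S$ constructed in the converse meets the structural hypotheses attached to $S$ in the circulant-graph framework. Specifically, since $G$ is a simple graph its adjacency matrix has zero diagonal, forcing $0\notin S$ so that indeed $S\subseteq\{1,\ldots,n-1\}$; and since the adjacency matrix is symmetric while $A_{ij}$ depends only on $i-j \bmod n$, the set $S$ is closed under the additive inverse modulo $n$, which is precisely what makes the notation $\circulant{n}{S}$ legitimate. Once these two observations are dispatched, the equivalence follows immediately.
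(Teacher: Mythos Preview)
Your argument is correct and is the natural way to verify this biconditional; the paper itself states the result as an observation without proof, so there is no alternative approach to compare against. Your additional care in the converse direction, checking that $0\notin S$ and that $S$ is closed under additive inverses modulo $n$, is appropriate bookkeeping even if the paper would have regarded it as implicit.
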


Occasionally, we will consider some previously-defined minimum rank parameter over a specific field other than $\mathbb C$.  To be precise, we establish the following notation.

\begin{notation}
When $K$ denotes a field, we write $K$ as a superscript to denote an analogous minimum rank parameter defined such that the matrices in question are taken to be symmetric with entries in $K$.  For instance, $\mcrREAL(G)$ denotes the minimum rank over all real symmetric circulant matrices with graph $G$.
\end{notation}

We refer to the problem of determining the value of $\mcr(G)$ for a graph $G$ as the {\it circulant minimum rank problem}.
The following question thus arises naturally.

\begin{q}\label{q:mr_equal_mcr_when}
What conditions on a circulant graph $G$ are sufficient to ensure that $\mr(G) = \mcr(G)$?
\end{q}

The extent to which a simple or complete answer to Question \ref{q:mr_equal_mcr_when} exists is certainly not clear.  Nevertheless, Theorem \ref{thm:consCircC} provides a partial result in this direction, showing that the equality of Question \ref{q:mr_equal_mcr_when} does in fact hold for at least one natural family of circulant graphs.

\subsection{The positive semidefinite case}

A variant of the minimum rank problem which has been well-studied (see, e.g.,   \cite{booth_et_al_2011}, \cite{booth_et_al_2008}, \cite[Section 46.3]{handbook_min_rank_chapter} and \cite{van_der_holst}) is that in which only positive semidefinite matrices are considered.
For convenience in this context, we set up the following notation, analogous to Notation \ref{not:qual_classes}.

\begin{notation}
Let $G$ be a graph.  We write $\Hqual_+(G)$ for the set of all  positive semidefinite Hermitian matrices with graph $G$.
\end{notation}

\begin{defn}\label{def:msr}
Let $G$ be a graph on $n$ vertices.  The {\it minimum semidefinite rank} of $G$ is
\[ \msr(G) = \min\{ \rank A : A \in \Hqual_+(G)  \}. \]
\end{defn}

We wish to consider the effect of restricting our attention to circulants in the positive semidefinite setting as well.  Hence, we define the following graph parameter.

\begin{defn}\label{def:mscr}
Let $G$ be a circulant graph.  The {\it minimum semidefinite circulant rank} of $G$ is
\[ \mscr(G) = \min\{\rank(A) : A \in \Hqual_+(G) \text{ is a circulant matrix} \}. \]
\end{defn}

The following observation shows that this parameter is well-defined.

\begin{obs}\label{obs:mscrG_well_defined}
Given a circulant graph $G$ on $n$ vertices, let $A$ be its adjacency matrix.  Since $G$ is a circulant, it is $d$-regular for some $d$.
In particular, the graph Laplacian matrix of $G$, namely $dI-A$, is well-known to be positive semidefinite of rank at most $n-1$, and is clearly a circulant matrix.  Hence, $\mscr(G)$ is defined and $\mscr(G) \le n-1$.
\end{obs}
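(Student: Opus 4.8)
The plan is to substantiate both assertions at once by producing a single explicit matrix, the graph Laplacian $L = dI - A$, and verifying the four properties it must enjoy: that it is a circulant matrix, that its graph is $G$, that it is positive semidefinite, and that its rank is at most $n-1$. Establishing these shows both that $\Hqual_+(G)$ contains a circulant (so that the minimum defining $\mscr(G)$ is taken over a nonempty set, hence well-defined) and that this minimum does not exceed $n-1$.

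First I would dispatch the three routine properties. That $L$ is circulant follows from the fact that, $G$ being a circulant graph, its adjacency matrix $A$ may be taken to be a circulant matrix, while $dI$ is a scalar multiple of the identity and so trivially circulant; the difference of two circulants is again circulant. That the graph of $L$ is $G$ follows from Definition \ref{def:graph_of_matrix}: for $i \ne j$ we have $L_{ij} = -A_{ij}$, so $L_{ij} \ne 0$ precisely when $A_{ij} \ne 0$, and the diagonal entries (all equal to $d$) play no role in determining the graph. That $\rank L \le n-1$ follows by exhibiting a nonzero kernel vector: since every row of $A$ sums to $d$, we have $A\mathbf{1} = d\mathbf{1}$ for the all-ones vector $\mathbf{1}$, whence $L\mathbf{1} = dI\mathbf{1} - A\mathbf{1} = d\mathbf{1} - d\mathbf{1} = 0$, giving a nontrivial kernel.

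The one property genuinely requiring an argument is positive semidefiniteness, and this is where I would focus the attention. One clean route uses the Fourier diagonalization of Theorem \ref{thm:circulants_diagonalized_by_Fourier}: since $L$ is a real symmetric circulant, $F_n^\ast L F_n$ is a real diagonal matrix whose diagonal entries are the eigenvalues of $L$, namely $d - \lambda_k$ where $\lambda_0,\ldots,\lambda_{n-1}$ are the eigenvalues of $A$. As $A$ is a symmetric nonnegative matrix with all row sums equal to $d$, the Perron--Frobenius theorem yields $\lambda_k \le d$ for every $k$, so each eigenvalue $d - \lambda_k$ of $L$ is nonnegative and $L$ is positive semidefinite. (Alternatively, one may simply invoke the classical fact that the Laplacian of any graph is positive semidefinite, via its decomposition as $B B^{\top}$ for an oriented incidence matrix $B$.) This step is the only substantive one, though it is standard; with it in hand, $L \in \Hqual_+(G)$ is a circulant of rank at most $n-1$, and both conclusions of the observation follow.
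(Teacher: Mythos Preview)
Your proposal is correct and follows exactly the approach the paper indicates: the paper's observation simply names the Laplacian $dI-A$ and invokes as ``well-known'' the facts you have carefully verified (circulant, graph $G$, positive semidefinite, rank at most $n-1$). You have supplied the details the paper leaves implicit, so there is nothing to correct or contrast.
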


\begin{ex}\label{ex:Complement_of_6_cycle_mcr}
Consider the graph $G=\circulant{6}{\{\pm 2,3\}}$, shown in Figure \ref{fig:circulant_graph_examples}.  In particular, note that $G$ is the complement of the $6$-cycle.  The real symmetric matrix
\[
A = \left[\begin{array}{rrrrrr}
1 & 0 & -2 & 3 & -2 & 0 \\
0 & 1 & 0 & -2 & 3 & -2 \\
\!\!-2 & 0 & 1 & 0 & -2 & 3 \\
3 & -2 & 0 & 1 & 0 & -2 \\
\!\!-2 & 3 & -2 & 0 & 1 & 0 \\
0 & -2 & 3 & -2 & 0 & 1
\end{array}\right]
\]
has graph $G$ and is a circulant matrix with rank $3$.  Meanwhile, from \cite[Theorem 14]{barrett2004} we have that $\mr(G) \ge 3$.  Hence, for this graph we have
\[
\mr(G) = \mrREAL(G) = \mcr(G) = \mcrREAL(G) = 3.
\]
Note that $A$ is not positive semidefinite; its nonzero eigenvalues are $6$
(with multiplicity 2) and $-6$.  In fact, it is shown in Example \ref{ex:Complement_of_6_cycle_poly} that $\mscr(G) = 4$.
\end{ex}

The analog of Question \ref{q:mr_equal_mcr_when} is again of interest.  In particular, for which circulant graphs $G$ do $\msr(G)$ and $\mscr(G)$ differ?  Example \ref{ex:Complement_of_6_cycle_poly} gives one such graph, showing that the two notions are in fact distinct.
More broadly, certain inequalities between the various minimum rank parameters are inherent from the sets over which their respective minima are defined.  These inequalities are illustrated in Figure \ref{fig:mr_32_flavors}, which is annotated with references showing separation between pairs of parameters, when available.

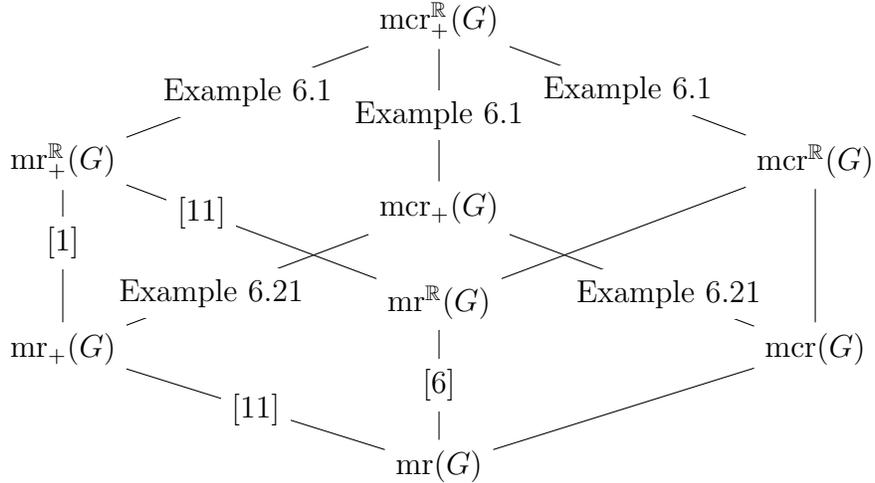
\begin{figure}
\centering
\begin{tikzpicture}[scale=1.25]
	\node (mscr) at (2,3.75) {$\mscr(G)$};
	\node (msr) at (-2,2.25) {$\msr(G)$};
	\node (mcr) at (6,2.25) {$\mcr(G)$};
	\node (mr) at (2,1) {$\mr(G)$};
	\draw (mscr) -- node [fill=white,pos=0.65] {Example \ref{ex:Complement_of_6_cycle_poly}}
	        (msr) -- node[fill=white] {\cite{fallat2007}} (mr);
	\draw (mscr) -- node [fill=white,pos=0.65] {Example \ref{ex:Complement_of_6_cycle_poly}}
	    (mcr) -- (mr);

	\def\x{0};
	\def\y{3};
	\node (mscrREAL) at (2+\x,2.75+\y) {$\mscrREAL(G)$};
	\node (msrREAL) at (-2+\x,1.25+\y) {$\msrREAL(G)$};
	\node (mcrREAL) at (6+\x,1.25+\y) {$\mcrREAL(G)$};
	\node (mrREAL) at (2+\x,-0.25+\y) {$\mrREAL(G)$};

	\draw (mscrREAL) -- node[fill=white] {Example \ref{ex:4cycle_over_R}} (msrREAL);
	\draw (mscrREAL) -- node[fill=white] {Example \ref{ex:4cycle_over_R}} (mcrREAL);
	\draw (mscrREAL) -- node[fill=white] {Example \ref{ex:4cycle_over_R}} (mscr);
	\draw (msrREAL) -- node[fill=white,pos=0.3] {\cite{fallat2007}} (mrREAL);
	\draw (msrREAL) -- node[fill=white,pos=0.4] {\cite{Barioli2010}} (msr);
	\draw (mcrREAL) -- (mrREAL);
	\draw (mcrREAL) -- (mcr);
	\draw (mrREAL) -- node[fill=white] {\cite{Berman2008}} (mr);
\end{tikzpicture}
\caption{Hasse diagram illustrating inequalities that hold by definition among various minimum rank parameters; the values of the parameters are nonincreasing from top to bottom.  Annotations on the edges in the diagram give references showing that strict inequality is possible between the corresponding pairs of parameters.}
\label{fig:mr_32_flavors}
\end{figure}

Although Definition \ref{def:mscr} may seem restrictive,
Theorem \ref{thm:consCircC}
will show that there is at least one natural class of circulant graphs $G$ for which $\mr(G)$ and $\mscr(G)$ coincide, meaning that the smallest rank among all Hermitian matrices with graph $G$ is realized by a positive semidefinite circulant matrix.  Of course, as the following observation notes, all of the minimum rank parameters mentioned here do coincide for the complete graph.

\begin{obs}\label{obs:complete_graph_min_rank}
The $n\times n$ matrix with every entry equal to $1$ witnesses that
\[
\mr(K_n)=\msr(K_n)=\mcr(K_n)=\mscr(K_n)=1.
\]
\end{obs}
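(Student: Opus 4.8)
The plan is to exhibit a single matrix that simultaneously certifies all four upper bounds, and then to supply one uniform lower bound. First I would take $J$ to be the $n\times n$ matrix with every entry equal to $1$, as suggested in the statement, and check that it has all the structural properties needed to be admissible in every one of the four minimizations. It is real symmetric, hence Hermitian; its entry $J_{ij}=1$ depends only on the residue of $i-j$ modulo $n$ (indeed it is constant), so by the characterization following the definition of a circulant matrix it is a circulant matrix; and it is positive semidefinite, since $J=\mathbf{1}\mathbf{1}^\ast$ for the all-ones vector $\mathbf{1}$, whence its eigenvalues are $n$ and $0$.

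Next I would verify that the graph of $J$ is indeed $K_n$. Every off-diagonal entry of $J$ equals $1\ne 0$, so by Definition \ref{def:graph_of_matrix} every pair of distinct vertices is adjacent; that is, the graph of $J$ is the complete graph. Since $J\in\Hqual_+(K_n)$ and $J$ is a circulant matrix, $J$ is a feasible point for each of the four minimization problems defining $\mr(K_n)$, $\msr(K_n)$, $\mcr(K_n)$, and $\mscr(K_n)$. As $\rank(J)=1$ (again from $J=\mathbf{1}\mathbf{1}^\ast$), each of the four parameters is at most $1$.

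For the matching lower bound, I would invoke the inequalities recorded in Figure \ref{fig:mr_32_flavors}, which give $\mr(K_n)\le\msr(K_n)\le\mscr(K_n)$ and $\mr(K_n)\le\mcr(K_n)\le\mscr(K_n)$; it therefore suffices to bound $\mr(K_n)$ from below by $1$. Any matrix whose graph is $K_n$ (with $n\ge 2$) has a nonzero off-diagonal entry and is thus not the zero matrix, so its rank is at least $1$. Combining the two bounds forces all four parameters to equal $1$.

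There is no real obstacle here; the only point requiring a moment of care is confirming that the single matrix $J$ genuinely lies in the smallest of the four feasible sets, namely the positive semidefinite circulant matrices with graph $K_n$, so that it certifies the upper bound for all four parameters at once.
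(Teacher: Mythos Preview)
Your proposal is correct and follows exactly the approach the paper intends: the observation is stated without proof, with the all-ones matrix named as the witness, and you have simply filled in the routine verifications that this matrix is a positive semidefinite Hermitian circulant of rank $1$ with graph $K_n$. Your care in noting the lower bound requires $n\ge 2$ is appropriate, since for $n=1$ the zero matrix has graph $K_1$ and rank $0$; the paper implicitly assumes $n\ge 2$ here.
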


\begin{section}{Orthogonal representations and symmetry}
\label{sec:orthogonal_representations_and_symmetry}
A critical tool in studying the minimum semidefinite rank of a graph is the notion of an
{\it orthogonal representation} of the graph.
This is an assignment of a single vector to each vertex of the graph such that the adjacency relation of the graph is captured by the orthogonality relation on the corresponding vectors.  More precisely, we make the following definition.

\begin{defn}
    Let $G$ be a graph and $K$ be a field.  An {\it orthogonal representation} for $G$  in $K^d$ is a function $r:V(G) \rightarrow K^d$ such that, whenever $v,w \in V(G)$ are distinct, $\langle r(v), r(w) \rangle \not=0$ if and only if $\{v,w\} \in E(G)$.
    We also refer to such a function $r$ as an orthogonal representation for $G$ {\it over} $K$.
	The {\it rank} of the orthogonal representation is the dimension of the subspace spanned by $r[V(G)]$.
\end{defn}

Less formally, given a graph $G$ with $n$ vertices, we say that a sequence of $n$ vectors {\it forms an orthogonal representation} for $G$ when some correspondence between the vectors and the vertices of the graph gives an orthogonal representation.

The following theorem gives the significance of the notion of an orthogonal representation in the context of the minimum rank problem.
Although this result is well-known, we include a proof for the sake of completeness, and because it serves as a prototype for the proofs of some analogous results to follow.

\begin{thm}[well-known]
\label{thm:gram_matrix_equivalence}
Let $G$ be a graph.  The following are equivalent.
\begin{enumerate}
\item\label{cond:exists_psd_matrix_w_graph_G_rank_k} There exists a positive semidefinite Hermitian matrix with rank $k$ and graph $G$.
\item\label{cond:exists_rank_k_OR} There exists an orthogonal representation for $G$ over $\mathbb C$ with rank $k$.
\end{enumerate}
\end{thm}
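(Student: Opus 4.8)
The plan is to exploit the standard correspondence between positive semidefinite Hermitian matrices and Gram matrices of finite families of vectors; in each direction, the orthogonal representation and the matrix are to be linked by the identity $A = B^\ast B$, where $B$ is the matrix whose columns are the representation vectors. The off-diagonal zero-nonzero pattern of such an $A$ records exactly which pairs of columns are orthogonal, so matching this pattern against the adjacency relation of $G$ is precisely the requirement defining an orthogonal representation, and the equivalence reduces to translating between the two viewpoints.

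For the implication \eqref{cond:exists_psd_matrix_w_graph_G_rank_k}$\implies$\eqref{cond:exists_rank_k_OR}, I would begin with a positive semidefinite Hermitian matrix $A$ of rank $k$ with graph $G$. Because $A$ is positive semidefinite of rank $k$, a spectral decomposition yields a factorization $A = B^\ast B$ in which $B$ is a $k\times n$ matrix. Writing $b_0, b_1, \ldots, b_{n-1}$ for the columns of $B$, one has $A_{ij} = \langle b_i, b_j\rangle$, so for $i \neq j$ the entry $A_{ij}$ is nonzero exactly when $\{v_i, v_j\} \in E(G)$. Hence $v_i \mapsto b_i$ is an orthogonal representation for $G$ in $\mathbb{C}^k$, and its rank is $\dim \operatorname{span}\{b_0, \ldots, b_{n-1}\} = \rank B = k$.

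For \eqref{cond:exists_rank_k_OR}$\implies$\eqref{cond:exists_psd_matrix_w_graph_G_rank_k} I would simply reverse the construction: given an orthogonal representation $r$ of rank $k$, let $B$ be the matrix whose $i$-th column is $r(v_i)$ and set $A = B^\ast B$. Then $A$ is automatically positive semidefinite and Hermitian, and its off-diagonal pattern again encodes the orthogonality relation among the vectors $r(v_i)$, hence the edges of $G$. The rank of $A$ equals $\rank B = k$.

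The only bookkeeping requiring genuine (if modest) care, rather than being purely formal, is the rank equality in both directions. It rests on the identity $\rank(B^\ast B) = \rank(B)$ for any complex matrix $B$, together with the identification of $\rank B$ with the dimension of the span of its columns, i.e.\ with the rank of the orthogonal representation. I would also note that the diagonal entries of $A$ play no role in Definition \ref{def:graph_of_matrix}, so there is no constraint on the norms $\|r(v_i)\|$; in particular an isolated vertex may be assigned the zero vector (forcing the corresponding row and column of $A$ to vanish) without disturbing either the graph of $A$ or its positive semidefiniteness.
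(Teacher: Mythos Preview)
Your argument is correct and is essentially identical to the paper's own proof: both directions proceed via the factorization $A=B^\ast B$ (the paper cites Horn--Johnson for this and for the rank of a Gram matrix, whereas you invoke the spectral decomposition and the identity $\rank(B^\ast B)=\rank B$ directly). Your additional remarks about the rank bookkeeping and isolated vertices are fine elaborations but do not change the approach.
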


\begin{proof}
Let
$M$ be an $n \times n$ positive semidefinite Hermitian matrix with rank $k$ and graph $G$.
Then $M = A^*A$ for some $k \times n$ matrix $A$ of rank $k$ by  \cite[Theorem 7.2.7]{HJ_2nd_ed}.  Since $M$ has graph $G$, the columns of $A$ form an orthogonal representation for $G$ with rank $k$.

Conversely, let $r$ be an orthogonal representation for $G$ over $\mathbb C$ with rank $k$.
Then the Gram matrix of the vectors in the image of $r$
is a positive semidefinite Hermitian matrix of rank $k$ by \cite[Theorem 7.2.10]{HJ_2nd_ed}.  Since those vectors form an orthogonal representation for $G$, this matrix has graph $G$.
\end{proof}

Note that the 
proof of Theorem \ref{thm:gram_matrix_equivalence} actually
shows that if a graph $G$ has an orthogonal representation over $\mathbb C$ with rank $k$, then $G$ has an orthogonal representation in $\mathbb C^k$.
Combining this observation with
Theorem \ref{thm:gram_matrix_equivalence}
gives the following corollary. 

\begin{cor}\label{cor:msr_is_min_k_for_OR}
Let $G$ be a graph and let $k$ be the smallest integer such that $G$ has an orthogonal representation in $\mathbb C^k$.  Then $k=\msr(G)$.
\end{cor}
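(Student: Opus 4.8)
The plan is to combine Theorem \ref{thm:gram_matrix_equivalence} with the observation recorded immediately before the corollary, so that the only real content is bookkeeping the distinction between the \emph{rank} of an orthogonal representation (the dimension of the span of its image) and the ambient dimension of the codomain into which it maps.

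First I would unwind the definition. By Definition \ref{def:msr}, $\msr(G)$ is the least rank of a matrix in $\Hqual_+(G)$. Applying Theorem \ref{thm:gram_matrix_equivalence} in both directions, a positive semidefinite Hermitian matrix of rank $j$ with graph $G$ exists if and only if $G$ admits an orthogonal representation over $\mathbb C$ of rank $j$. Consequently $\msr(G)$ coincides with the smallest rank occurring among all orthogonal representations for $G$ over $\mathbb C$; I would name this quantity $r$, so that $\msr(G)=r$ by construction.

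Next I would establish $r=k$ by two inequalities. For $k\le r$, I take an orthogonal representation of rank $r$; the observation following Theorem \ref{thm:gram_matrix_equivalence} then yields an orthogonal representation for $G$ in $\mathbb C^{r}$, whence the least ambient dimension $k$ satisfies $k\le r$. For $r\le k$, I take an orthogonal representation in $\mathbb C^{k}$; its rank is the dimension of a subspace of $\mathbb C^{k}$ and so is at most $k$, giving $r\le k$. Combining the two gives $r=k$, and therefore $\msr(G)=r=k$, as desired.

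There is no substantive obstacle here, as the argument is purely formal; the only point demanding care is to keep the two senses of ``dimension'' cleanly separated, invoking the observation precisely when passing from rank to ambient dimension and the trivial span-dimension bound when passing back.
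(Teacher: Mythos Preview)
Your proposal is correct and follows essentially the same approach as the paper: the paper merely states that combining Theorem~\ref{thm:gram_matrix_equivalence} with the observation preceding the corollary yields the result, and you have simply spelled out the two inequalities $k\le r$ and $r\le k$ that make this explicit.
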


Hence, the problem of determining $\msr(G)$ for a particular graph $G$ is equivalent to determining the smallest $k$ such that $G$ has an orthogonal representation in $\mathbb C^k$.  An entirely similar argument shows
that $\msrREAL(G)$ is identical with the smallest $k$ such that $G$ has an orthogonal representation in $\mathbb R^k$.

\begin{ex}\label{ex:Complement_of_6_cycle_OR}
Consider once again the graph $G=\circulant{6}{\{\pm 2,3\}}$ shown in Figure \ref{fig:circulant_graph_examples}.
Let 
\begin{align*}
    v_0 &= \begin{bmatrix} 1 \\ 0 \\ 0 \end{bmatrix}, &
    v_1 &= \begin{bmatrix} 0 \\ 2 \\ 3 \end{bmatrix}, &
    v_2 &= \begin{bmatrix} 1 \\ 3 \\ -2 \end{bmatrix}, &
    v_3 &= \begin{bmatrix} 1 \\ -1 \\ -1 \end{bmatrix}, &
    v_4 &= \begin{bmatrix} 1 \\ 0 \\ 1 \end{bmatrix} &
    \text{and} &&
    v_5 &= \begin{bmatrix} 0 \\ 1 \\ 0 \end{bmatrix}. &
\end{align*}
It is straightforward to verify that $\langle v_i, v_j\rangle \not= 0$ precisely when $i-j \equiv k \bmod 6$ for some $k \in \{\pm 2,3\}$.  Hence, $i \mapsto v_i$ is an orthogonal representation for $G$ of rank $3$,
showing that $\msrREAL(G) \le 3$.
Viewed as extending Example \ref{ex:Complement_of_6_cycle_mcr},
this gives $\msr(G)=\msrREAL(G)=3$.
\end{ex}

The first major goal of the present work is to show that the connection established by Theorem \ref{thm:gram_matrix_equivalence} has a natural analog for the minimum semidefinite circulant rank of Definition \ref{def:mscr}.  In particular, the restriction to circulant matrices corresponds to a requirement that the orthogonal representations considered possess
symmetry in the sense set out precisely as follows.

\begin{defn}\label{def:cyclically_symmetric_OR_in_C}
Let $G$ be a graph with $V(G)=\{0,1,\ldots,n-1\}$.  An orthogonal representation
$r$ for $G$ in $\mathbb C^k$
is said to be {\it cyclically symmetric} if there exists some $k\times k$ unitary matrix $U$ with $U^n=I$ and some $x \in \mathbb C^k$ such that 
$r(i) = U^i x$ for each
$i \in \{0,1,\ldots,n-1\}$.
\end{defn}

The following example serves to illustrate Definition \ref{def:cyclically_symmetric_OR_in_C}, and represents essentially the same construction used by Lov\'asz for \cite[Theorem 2]{lovasz79}.  The discussion of Example \ref{ex:5cycle_poly_example} details how the theory of Section \ref{sec:results_over_the_reals} was applied to construct this example.

\begin{ex}\label{ex:5cycle_cyclically_symm_rep}
Consider the $5$-cycle, $C_5=\circulant{5}{\{\pm 1\}}$.
Figure \ref{fig:depiction_of_C5_orth_rep} shows five vectors in $\mathbb R^3$ that form an orthogonal representation for this graph;
the endpoints of the vectors are at a distance from the $yz$-plane such that $v_i$ and $v_j$ meet at an angle of $\pi/2$ precisely when $i-j \not\equiv \pm 1 \bmod 5$.
Explicitly, these vectors are given by
\[
v_i = \left(
{\textstyle \sqrt{ \scriptstyle 2\cos(\pi/5)\, }},\,
{\textstyle \cos(\frac{2i\pi}5)-\sin(\frac{2i\pi}5)},\,
{\textstyle \cos(\frac{2i\pi}5)+\sin(\frac{2i\pi}5)}
\right)
\]
for $i\in\{0,1,\ldots,4\}$.
In particular, each $v_i$ is the image of $v_{i-1}$ under a rotation about the $x$-axis by an angle of $2\pi/5$, where the subscripts are computed modulo $5$.
Hence, taking $x$ to be any of the five vectors and $U$ to be the unitary matrix whose action on $\mathbb R^3$ is
the aforementioned rotation
shows that Definition \ref{def:cyclically_symmetric_OR_in_C} is satisfied. That is, the orthogonal representation depicted is cyclically symmetric.

\begin{figure}
\centering

\tdplotsetmaincoords{60}{110}
\pgfmathsetmacro{\rvec}{0}
\pgfmathsetmacro{\thetavec}{270}
\pgfmathsetmacro{\phivec}{155}
\begin{tikzpicture}[scale=4.25,tdplot_main_coords]

\tdplotsetcoord{P}{\rvec}{\thetavec}{\phivec}
\tdplotsetthetaplanecoords{\phivec}

\tdplotsetrotatedcoords{\phivec}{\thetavec}{0}

\tdplotsetrotatedcoordsorigin{(P)}

\draw[tdplot_rotated_coords,-latex] (-0.2,0,0) -- (1,0,0) node[above]{$x$};
\draw[tdplot_rotated_coords,-latex] (0,-0.6,0) -- (0,0.7,0) node[left]{$y$};
\draw[tdplot_rotated_coords,-latex] (0,0,-0.7) -- (0,0,0.6) node[left]{$z$};

\draw[color=black,tdplot_rotated_coords,dotted] (0.539344662916632,0.33,0.33) -- (0.539344662916632,-0.214013173973402,0.420024503556700) --
(0.539344662916632,-0.465600748889140,-0.0737439140274914) --
(0.539344662916632,-0.0737439140274915,-0.465600748889140) --
(0.539344662916632,0.420024503556700,-0.214013173973402) --
(0.539344662916632,0.33,0.33);

\draw[-stealth,color=black,tdplot_rotated_coords] (0,0,0) -- (0.539344662916632,0.33,0.33) node[left] {$v_0$};
\draw[-stealth,color=black,tdplot_rotated_coords] (0,0,0) -- (0.539344662916632,-0.214013173973402,0.420024503556700) node[above] {$v_1$};
\draw[-stealth,color=black,tdplot_rotated_coords] (0,0,0) -- (0.539344662916632,-0.465600748889140,-0.0737439140274914) node[right] {$v_2$};
\draw[-stealth,color=black,tdplot_rotated_coords] (0,0,0) -- (0.539344662916632,-0.0737439140274915,-0.465600748889140) node[above right] {$v_3$};
\draw[-stealth,color=black,tdplot_rotated_coords] (0,0,0) -- (0.539344662916632,0.420024503556700,-0.214013173973402) node[above] {$v_4$};
\end{tikzpicture}

\caption{Five vectors forming an orthogonal representation for the $5$-cycle.  That the representation is cyclically symmetric is seen by considering a rotation about the $x$-axis by an angle of $2\pi/5$.}
\label{fig:depiction_of_C5_orth_rep}
\end{figure}
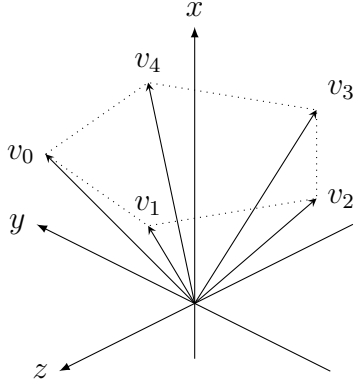
\end{ex}

The following proposition shows that when Definition \ref{def:cyclically_symmetric_OR_in_C} is met by an orthogonal representation for $G$ in $\mathbb R^k$, the associated unitary matrix can be taken to be real orthogonal.

\begin{prop}
Let $G$ be a graph with $V(G)=\{0,1,\ldots,n-1\}$.  If $r:V(G) \rightarrow \mathbb R^k$ is a cyclically symmetric orthogonal representation for $G$, then there exists some $k \times k$ real orthogonal matrix $A$ such that $A^n=I$ and some $x \in \mathbb R^k$
such that $r(i) = A^ix$ for each $i \in \{0,1,\ldots,n-1\}$.
\end{prop}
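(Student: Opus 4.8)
The plan is to extract a real orthogonal matrix directly from the complex unitary $U$ supplied by Definition \ref{def:cyclically_symmetric_OR_in_C}, rather than trying to build one from scratch. First I would observe that the vector $x$ can be taken to be real at no cost: since $r(0)=U^0x=x$ and $r(0)$ lies in $\mathbb R^k$ by hypothesis, the given $x$ is already real, so I set $x=r(0)$. The key structural fact is that $U$ acts on the representing vectors as a cyclic shift. Indeed, for each $i$ with $i\le n-2$ we have $Ur(i)=U\cdot U^ix=U^{i+1}x=r(i+1)$, while $Ur(n-1)=U^nx=x=r(0)$. Thus $U$ sends each $r(i)$ to $r(i+1\bmod n)$, all of which are real vectors.

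Next I would pass to the real subspace $W=\operatorname{span}_{\mathbb R}\{r(0),r(1),\ldots,r(n-1)\}\subseteq\mathbb R^k$. Because $U$ carries the real spanning set of $W$ into $W$, it follows that $U(W)\subseteq W$ and that $U$ maps real vectors of $W$ to real vectors of $W$; hence the restriction $A_0:=U|_W$ is a well-defined real linear endomorphism of $W$. Since $U$ is unitary and the Hermitian inner product coincides with the real inner product on real vectors, $A_0$ preserves the inner product on $W$, so $A_0$ is real orthogonal. Moreover $A_0^n=(U^n)|_W=I|_W$.

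Finally I would extend $A_0$ to all of $\mathbb R^k$ by letting it act as the identity on the orthogonal complement $W^\perp$, calling the resulting map $A$. As a direct sum of orthogonal maps, $A$ is real orthogonal, and $A^n=I$ because $A_0^n=I$ on $W$ while $A=I$ on $W^\perp$. Since $x=r(0)\in W$, we obtain $A^ix=A_0^ir(0)=r(i)$ for each $i$, as required. I expect no serious obstacle here; the only point needing genuine care is verifying that $A_0$ is truly a \emph{real} map, which rests entirely on the single observation that $U$ permutes the real vectors $r(0),\ldots,r(n-1)$ cyclically and therefore preserves their real span.
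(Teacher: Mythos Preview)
Your argument is correct and follows essentially the same route as the paper: restrict $U$ to the real span $W$ of the $r(i)$, observe this restriction is a real isometry, and extend to a real orthogonal matrix on $\mathbb R^k$. Your version is in fact slightly more explicit than the paper's, since you spell out the extension by the identity on $W^\perp$ and verify $A^n=I$ on both summands, whereas the paper leaves this step implicit.
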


\begin{proof} Suppose $r:V(G) \rightarrow \mathbb R^k$ is a cyclically symmetric orthogonal representation for $G$.  Then, by Definition \ref{def:cyclically_symmetric_OR_in_C}, there exists some $x \in \mathbb R^k$ and some $k\times k$ unitary matrix $U$ such that $U^n=I$ and $U^ix = r(i) \in \mathbb R^k$ for each $i \in \{0,1,\ldots,n-1\}$.  In particular, letting $W$ be the $\mathbb R$-span of $\{x,Ux,U^2x,\ldots,U^{n-1}x \}$, we have that $W$ is invariant under $U$.  Since $U$ is a unitary transformation, the action of $U$ on this subspace of $\mathbb R^k$ induces a real isometry.  Hence,
there is some real orthogonal matrix $A$ such that $Ay=Uy$ for every $y \in W$, so that, in particular, $r(i)=U^ix=A^ix$ for every $i \in \{0,1,\ldots,n-1\}$.
\end{proof}

A fact that will be important to what follows is that, whenever
a cyclically symmetric orthogonal representation exists, one can be found in a certain very simple canonical form; with a view toward expressing this form, we introduce the following notation.

\begin{notation}\label{not:U_n}
When $n$ is understood to be a positive integer, we write $U_n$ for the diagonal matrix whose $i$th diagonal entry is $\w^i$, i.e., $U_n=\diag{1,\w,\w^2,\ldots,\w^{n-1}}$.  Note that $U_n^n = I$.
\end{notation}

\begin{obs}\label{obs:mult_Un_by_vector_x}
For any integer $i \ge 0$,
\[ U_n^i = \diag{1,\w^i,\w^{2i},\ldots,\w^{(n-1)i}} \]
so that, for any vector $x=(x_0,x_1,\ldots,x_{n-1}) \in \mathbb C^n$,
\begin{equation}\label{eqn:Un_mult_by_vector_x}
U_n^i x = (  x_0,\w^ix_1,\w^{2i}x_2,\ldots,\w^{(n-1)i}x_{n-1} )
\end{equation}
and hence, for any integer $j \ge 0$,
\begin{equation}\label{eqn:UjUj_inner_prod}
\langle U_n^i x, U_n^j x \rangle = \sum_{k=0}^{n-1} x_k\w^{ki}\overline{x_k} \overline{\w^{kj}} = \sum_{k=0}^{n-1} |x_k|^2 \w^{ki}\w^{-kj} = \sum_{k=0}^{n-1} |x_k|^2 \left(\w^{i-j}\right)^k.
\end{equation}
\end{obs}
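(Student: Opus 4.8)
The plan is to verify the three assertions of this observation as a short chain of direct computations, each following immediately from the previous one, starting from the definition of $U_n$ recorded in Notation \ref{not:U_n}.

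First I would establish the formula for $U_n^i$. Since $U_n=\diag{1,\w,\w^2,\ldots,\w^{n-1}}$ is diagonal, its powers are computed entrywise: a product of diagonal matrices is the diagonal matrix of the entrywise products, so the $k$th diagonal entry of $U_n^i$ is $(\w^k)^i=\w^{ki}$. This gives $U_n^i=\diag{1,\w^i,\w^{2i},\ldots,\w^{(n-1)i}}$, the leading entry being $1^i=1$. Multiplying this diagonal matrix against $x=(x_0,\ldots,x_{n-1})$ then scales the $k$th coordinate by $\w^{ki}$, which is exactly \eqref{eqn:Un_mult_by_vector_x}.

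For the inner product \eqref{eqn:UjUj_inner_prod}, I would use the standard Hermitian form $\langle a,b\rangle=\sum_{k=0}^{n-1}a_k\overline{b_k}$ on $\mathbb C^n$, with the conjugate falling on the second argument. Taking $a=U_n^i x$ and $b=U_n^j x$ and substituting the coordinate formula just obtained gives $\langle U_n^i x,U_n^j x\rangle=\sum_{k=0}^{n-1}(\w^{ki}x_k)\overline{(\w^{kj}x_k)}$. Collecting $x_k\overline{x_k}=|x_k|^2$ and using that each $\w^{kj}$ lies on the unit circle, so that $\overline{\w^{kj}}=\w^{-kj}$, yields $\sum_{k=0}^{n-1}|x_k|^2\w^{ki}\w^{-kj}$; combining the exponents gives $\sum_{k=0}^{n-1}|x_k|^2(\w^{i-j})^k$, as claimed.

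This computation presents no genuine obstacle; the only points requiring care are bookkeeping ones, namely fixing the convention that the Hermitian inner product is conjugate-linear in its second slot (so that it is the conjugate of the second vector's coordinates that appears) and invoking $\overline{\w}=\w^{-1}$, which is valid precisely because $\w$ is a root of unity. Everything else is a routine manipulation of sums of scalars, and it is this final closed form, a sum of nonnegative weights $|x_k|^2$ against powers of $\w^{i-j}$, that makes the observation the computational engine for the cyclically symmetric representations of Definition \ref{def:cyclically_symmetric_OR_in_C}.
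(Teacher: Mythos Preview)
Your proposal is correct and is exactly the straightforward verification one would carry out; the paper in fact treats this as an observation without any accompanying proof, so there is nothing further to compare. The only minor point is that your closing sentence about Definition \ref{def:cyclically_symmetric_OR_in_C} is commentary rather than proof, but the mathematical content is complete and accurate.
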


Unsurprisingly, the matrix $U_n$ has a close connection with the Fourier matrix $F_n$ given in Definition \ref{def:fourier_matrix}.  The following lemma gives one view of this connection.

\begin{lem}\label{lem:spec_decomp_of_Ux_gram_mx}
Given $x=(x_0,x_1,\ldots,x_{n-1}) \in \mathbb R^n$, let $\Lambda=n\diag{x_0^2,x_1^2,\ldots,x_{n-1}^2}$.
Then the Gram matrix of the vectors
$x, U_n x, U_n^2 x, \ldots, U_n^{n-1} x$
is
$F_n^* \Lambda F_n$.
\end{lem}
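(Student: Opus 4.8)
The plan is to recognize the listed vectors as the columns of a single matrix that factors neatly through the (unnormalized) Fourier matrix, and then to read off the Gram matrix as a product. Throughout I would use the convention, consistent with the proof of Theorem \ref{thm:gram_matrix_equivalence}, that the Gram matrix of a list of vectors is $A^*A$, where $A$ is the matrix having those vectors as its columns.

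First I would assemble the data into a matrix. Let $D = \diag{x_0, x_1, \ldots, x_{n-1}}$ and let $A$ be the $n \times n$ matrix whose $i$th column is $U_n^i x$. By \eqref{eqn:Un_mult_by_vector_x} in Observation \ref{obs:mult_Un_by_vector_x}, the entry of $A$ in row $k$ and column $i$ is $\w^{ki} x_k$. Since $\hat F_n$ is the Vandermonde matrix of the roots of unity, its $(k,i)$ entry is exactly $\w^{ki}$; multiplying on the left by the diagonal matrix $D$ scales row $k$ by $x_k$, so $A = D \hat F_n$. This identification is the crux of the argument and the one step where I would be careful to match indices (in particular, placing $D$ on the left rather than the right).

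With the factorization in hand, the rest is formal. The Gram matrix is
\[
A^*A = (D\hat F_n)^*(D\hat F_n) = \hat F_n^* D^* D \hat F_n.
\]
Because $x$ is real, $D$ is a real diagonal matrix, so $D^*D = D^2 = \diag{x_0^2, x_1^2, \ldots, x_{n-1}^2}$. Substituting $\hat F_n = \sqrt n F_n$ then gives $A^*A = n F_n^* D^2 F_n = F_n^*(nD^2)F_n = F_n^*\Lambda F_n$, since $\Lambda = nD^2$ by definition.

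I do not expect any genuine obstacle: once the factorization $A = D\hat F_n$ is established, the scaling factor $n$ built into $\Lambda$ is precisely what absorbs the two factors of $1/\sqrt n$ hidden in $F_n$, and the result drops out. The only point demanding attention is the placement of the conjugate transpose; the convention $A^*A$ (rather than $AA^*$) is what lands the product on $F_n^*\Lambda F_n$ rather than its transpose $F_n \Lambda F_n^*$. As a cross-check, one could instead verify the claim entrywise, comparing the $(i,j)$ entry $\sum_{k=0}^{n-1} x_k^2\,\w^{k(j-i)}$ of $F_n^*\Lambda F_n$ against the inner product computed in \eqref{eqn:UjUj_inner_prod}.
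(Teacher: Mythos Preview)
Your proof is correct and essentially identical to the paper's: the paper sets $X=\sqrt{n}\,\diag{x_0,\ldots,x_{n-1}}$ and observes that the column vectors are those of $XF_n$, so the Gram matrix is $(XF_n)^*(XF_n)=F_n^*X^2F_n=F_n^*\Lambda F_n$. The only difference from your version is the cosmetic choice of whether to absorb the factor $\sqrt n$ into the diagonal matrix (as the paper does) or into the Fourier matrix via $\hat F_n$ (as you do); the factorization $XF_n = D\hat F_n$ is literally the same matrix.
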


\begin{proof}
Let $X=\sqrt n\diag{x_0,x_1,\ldots,x_{n-1}}$ and note that $X^2=\Lambda$.
It follows from \eqref{eqn:Un_mult_by_vector_x} that column $i$ of $X F_n$ is given by $U_n^i x$  for each $i \in \{0,1,\ldots,n-1\}$.  Hence, letting $M$ be the Gram matrix of the vectors $x, U_n x, U_n^2 x, \ldots, U_n^{n-1} x$, we have  that $M_{ij}$ is the inner product of the $i$th and $j$th columns of $X F_n$ for every $i,j \in \{0,1,\ldots,n-1\}$.  Thus,
\begin{equation*}
M  = (XF_n)^*(XF_n) = (F_n^*X)(XF_n) = F_n^*X^2F_n = F_n^*\Lambda F_n.
\qedhere
\end{equation*}
\end{proof}

Given any $x \in \mathbb R^n$, the vectors $x, U_n x, U_n^2 x, \ldots, U_n^{n-1} x$ by definition form a cyclically symmetric orthogonal representation for the graph of their
Gram matrix.
Lemma  \ref{lem:spec_decomp_of_Ux_gram_mx} 
shows how the eigenvalues of this matrix arise directly from the coordinates of $x$.
A very useful consequence is that the rank of the Gram matrix, and hence of the associated orthogonal representation, becomes transparent.

\begin{lem}\label{lem:span_of_Ux_vectors_is_size_of_x_support}
If $x \in \mathbb R^n$ has support of size $k$, then the vectors $x,U_nx,U_n^2x,\ldots,U_n^{n-1}x$ span a $k$-dimensional subspace.
\end{lem}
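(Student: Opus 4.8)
The plan is to reduce the computation of the dimension of the span directly to a rank computation for the Gram matrix supplied by Lemma \ref{lem:spec_decomp_of_Ux_gram_mx}, exploiting the fact that Lemma has already done the substantive work of identifying that Gram matrix in a transparent form. First I would recall the standard fact that, for any finite collection of vectors, the dimension of their span equals the rank of their Gram matrix. Applying this to the vectors $x, U_n x, U_n^2 x, \ldots, U_n^{n-1} x$, it suffices to compute the rank of their Gram matrix, which by Lemma \ref{lem:spec_decomp_of_Ux_gram_mx} is exactly $F_n^* \Lambda F_n$, where $\Lambda = n\diag{x_0^2, x_1^2, \ldots, x_{n-1}^2}$.

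Next I would invoke the fact, noted just after Definition \ref{def:fourier_matrix}, that $F_n$ is unitary and hence invertible. Since multiplying a matrix on either side by an invertible matrix leaves its rank unchanged, we obtain $\rank(F_n^* \Lambda F_n) = \rank(\Lambda)$. It therefore remains only to read off the rank of the diagonal matrix $\Lambda$.

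Finally I would observe that the $i$th diagonal entry of $\Lambda$ is $n x_i^2$, which is nonzero precisely when $x_i \neq 0$. Consequently the number of nonzero diagonal entries of $\Lambda$, and hence $\rank(\Lambda)$, equals the number of nonzero coordinates of $x$, namely $|\supp(x)| = k$. Chaining the equalities gives that the span of $x, U_n x, \ldots, U_n^{n-1} x$ has dimension $k$, as desired. I do not anticipate a genuine obstacle here: the only point requiring care is the appeal to the equality of span dimension and Gram-matrix rank, together with rank invariance under the unitary $F_n$; once Lemma \ref{lem:spec_decomp_of_Ux_gram_mx} is in hand, the remaining steps are immediate.
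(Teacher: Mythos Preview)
Your proof is correct and follows essentially the same route as the paper: both appeal to Lemma~\ref{lem:spec_decomp_of_Ux_gram_mx} to identify the Gram matrix as $F_n^*\Lambda F_n$ and then count the nonzero diagonal entries of $\Lambda$. The only cosmetic difference is that the paper phrases the last step in terms of eigenvalues (since $F_n$ is unitary, the diagonal entries of $\Lambda$ are the eigenvalues of the Gram matrix), whereas you phrase it via rank invariance under multiplication by invertible matrices.
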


\begin{proof}
By Lemma \ref{lem:spec_decomp_of_Ux_gram_mx}, the Gram matrix of the vectors $x,U_nx,U_n^2x,\ldots,U_n^{n-1}x$ has eigenvalues $nx_0^2,nx_1^2,\ldots,nx_{n-1}^2$, and exactly $k$ of these are nonzero.
\end{proof}

Lemma \ref{lem:span_of_Ux_vectors_is_size_of_x_support} will be especially useful once we have shown that cyclically symmetric orthogonal representations by vectors of the form $x, U_n x, U_n^2 x, \ldots, U_n^{n-1} x$ are essentially the only ones which must be considered.  This fact is anticipated by the following result, which uses Theorem \ref{thm:circulants_diagonalized_by_Fourier} to give Lemma \ref{lem:spec_decomp_of_Ux_gram_mx} a natural interpretation in terms of circulant matrices.

\begin{prop}\label{prop:psd_circulant_iff_Gram_mx_of_Unxs}
An $n\times n$ positive semidefinite Hermitian matrix is a circulant matrix if and only if it is the Gram matrix of the vectors $x,U_nx,U_n^2x,\ldots,U_n^{n-1}x$ for some nonnegative $x \in \mathbb R^n$.
\end{prop}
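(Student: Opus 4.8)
The plan is to prove the two directions separately, with the forward (``only if'') direction carrying essentially all of the content and the reverse (``if'') direction following immediately from the computation already recorded in Observation \ref{obs:mult_Un_by_vector_x}. For the ``if'' direction, suppose $M$ is the Gram matrix of $x, U_n x, \ldots, U_n^{n-1}x$ for some nonnegative $x \in \mathbb{R}^n$. Every Gram matrix is positive semidefinite and Hermitian, so it remains only to check that $M$ is circulant; but \eqref{eqn:UjUj_inner_prod} gives $M_{ij} = \langle U_n^i x, U_n^j x\rangle = \sum_{k=0}^{n-1}|x_k|^2 (\omega^{i-j})^k$, which depends only on the residue of $i-j$ modulo $n$. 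Hence $M$ is circulant. (Note that nonnegativity of $x$ is not needed here; any real $x$ would do.)

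For the ``only if'' direction, let $M$ be an arbitrary $n \times n$ positive semidefinite Hermitian circulant matrix. By Lemma \ref{lem:spec_decomp_of_Ux_gram_mx}, it suffices to exhibit a diagonal matrix $\Lambda$ with nonnegative real diagonal entries such that $M = F_n^* \Lambda F_n$: for then, setting $x_k = \sqrt{\Lambda_{kk}/n}$ yields a nonnegative $x \in \mathbb{R}^n$ with $n\,\diag{x_0^2,\ldots,x_{n-1}^2} = \Lambda$, and the lemma identifies the Gram matrix of $x, U_n x, \ldots, U_n^{n-1}x$ as precisely $F_n^* \Lambda F_n = M$. The natural candidate is $\Lambda := F_n M F_n^*$, for which the identity $F_n^*(F_n M F_n^*)F_n = M$ holds automatically since $F_n$ is unitary. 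Thus the whole problem reduces to showing that this particular $\Lambda$ is (a) diagonal and (b) nonnegative.

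The step I expect to be the one subtle point is verifying that $\Lambda = F_n M F_n^*$ is diagonal. Theorem \ref{thm:circulants_diagonalized_by_Fourier} gives that $F_n^* M F_n =: D$ is diagonal, but this is the \emph{opposite} conjugation, so some care is needed to transfer diagonality to $F_n M F_n^*$. I would handle this by writing $M = F_n D F_n^*$ and hence $\Lambda = F_n^2\, D\,(F_n^*)^2$, then observing via the direct computation $(F_n^2)_{ij} = \frac{1}{n}\sum_{k=0}^{n-1}\omega^{k(i+j)}$ that $F_n^2$ is exactly the permutation matrix effecting the reversal $i \mapsto -i \bmod n$. Conjugating the diagonal matrix $D$ by this permutation merely reorders its diagonal entries, so $\Lambda$ is diagonal with the same multiset of diagonal entries as $D$. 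For nonnegativity, note that $D$ is unitarily similar to $M$, so its diagonal entries are exactly the eigenvalues of $M$; since $M$ is positive semidefinite these are all nonnegative, and the same therefore holds for $\Lambda$. This completes the reduction and hence the proof.
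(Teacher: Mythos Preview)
Your proof is correct and follows essentially the same approach as the paper: use Fourier diagonalization of the circulant $M$ to extract its (nonnegative) eigenvalues, take scaled square roots to build $x$, and invoke Lemma~\ref{lem:spec_decomp_of_Ux_gram_mx}; for the converse, verify directly that the Gram-matrix entries depend only on $i-j \bmod n$. You are in fact more careful than the paper on one point: the paper simply asserts that $F_n M F_n^\ast$ is diagonal ``by Theorem~\ref{thm:circulants_diagonalized_by_Fourier}'' without addressing the reversed order of conjugation, whereas you supply the $F_n^2$-is-a-permutation argument that bridges $F_n^\ast M F_n$ and $F_n M F_n^\ast$.
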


\begin{proof}
Let $M$ be an $n\times n$ positive semidefinite Hermitian matrix.
Suppose first that $M$ is a circulant matrix.  Then, by Theorem \ref{thm:circulants_diagonalized_by_Fourier}, 
\[
F_nMF_n^* = \Lambda=\diag{\lambda_0,\lambda_1,\ldots,\lambda_{n-1}},
\]
where $\lambda_0,\lambda_1,\ldots,\lambda_{n-1}$ are the eigenvalues of $M$.  Since these are nonnegative,  we may take $x=(\sqrt{\lambda_0/n},\sqrt{\lambda_1/n},\ldots,\sqrt{\lambda_{n-1}/n}) \in \mathbb R^n$.  Then $x$ is nonnegative and, by Lemma \ref{lem:spec_decomp_of_Ux_gram_mx}, $M$ is the Gram matrix of the vectors
$x, U_n x, U_n^2 x, \ldots, U_n^{n-1} x$.

Now suppose $M$ is the Gram matrix of the vectors $x, U_n x, U_n^2 x, \ldots, U_n^{n-1} x$ for some nonnegative $x \in \mathbb R^n$.  Then $M$ is positive semidefinite by \cite[Theorem 7.2.10]{HJ_2nd_ed}.  It remains to show only that $M$ is a circulant matrix, and this follows from the fact that, since $U_n^n=I$,
\[ M_{ij} = \langle U_n^i x, U_n^j x \rangle = \langle U_n^{i-j} x, x \rangle = \langle U_n^{i'-j'} x, x \rangle = \langle U_n^{i'}\!x, U_n^{j'}\!x \rangle = M_{i'j'} \]
for every $i,j,i',j' \in \{0,1,\ldots,n-1\}$ with $i-j \equiv i'-j' \bmod n$.
\end{proof}

The following theorem serves to establish the canonical form for a cyclically symmetric orthogonal representation that was alluded to earlier.  However, its primary significance is seen by analogy with Theorem \ref{thm:gram_matrix_equivalence}.  Just as that theorem showed the existence of a positive semidefinite matrix with a given graph to be equivalent to the existence of an orthogonal representation for that graph with the same rank, the following result shows that a positive semidefinite {\it circulant} matrix with a given graph exists precisely when there is a corresponding {\it cyclically symmetric} orthogonal representation for that graph with the same rank.

\begin{thm}\label{thm:msr_circ_symmetric_rep_equivalence}
Let $G = \circulant{n}{S}$.  The following are equivalent.
    \begin{enumerate}
        \item\label{cond:exists_rank_k_matrix_in_HplusG} There exists a positive semidefinite Hermitian circulant matrix with rank $k$ and graph $G$.
        \item\label{cond:G_has_cyc_symm_rep_in_Ck} There exists a cyclically symmetric orthogonal representation for $G$ over $\mathbb C$ with rank $k$.
        \item\label{cond:G_has_cyc_symm_rep_in_canonical_form} There exists a nonnegative $x \in \mathbb R^n$ with support of size $k$ such that $i \mapsto U_n^i x$ is an orthogonal representation for $G$.
    \end{enumerate}
  \end{thm}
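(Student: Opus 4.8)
The plan is to establish the cycle of implications $\eqref{cond:exists_rank_k_matrix_in_HplusG} \implies \eqref{cond:G_has_cyc_symm_rep_in_canonical_form} \implies \eqref{cond:G_has_cyc_symm_rep_in_Ck} \implies \eqref{cond:exists_rank_k_matrix_in_HplusG}$, since the machinery assembled above feeds each arrow almost directly. The theorem is really a synthesis of Proposition \ref{prop:psd_circulant_iff_Gram_mx_of_Unxs} together with the two lemmas preceding it, so the work will be in citing them in the right order rather than in new calculation.

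For $\eqref{cond:exists_rank_k_matrix_in_HplusG} \implies \eqref{cond:G_has_cyc_symm_rep_in_canonical_form}$ I would start from a positive semidefinite Hermitian circulant matrix $M$ of rank $k$ with graph $G$ and apply Proposition \ref{prop:psd_circulant_iff_Gram_mx_of_Unxs} to obtain a nonnegative $x \in \mathbb R^n$ with $M$ equal to the Gram matrix of $x, U_n x, \ldots, U_n^{n-1} x$. Because $M$ has graph $G$, the map $i \mapsto U_n^i x$ is by definition an orthogonal representation for $G$; because $\rank M = k$, Lemma \ref{lem:span_of_Ux_vectors_is_size_of_x_support} forces the support of $x$ to have size exactly $k$.

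The implication $\eqref{cond:G_has_cyc_symm_rep_in_canonical_form} \implies \eqref{cond:G_has_cyc_symm_rep_in_Ck}$ is a matter of unwinding definitions: taking $U=U_n$, which is unitary with $U_n^n=I$ by Notation \ref{not:U_n}, the representation $i \mapsto U_n^i x$ satisfies Definition \ref{def:cyclically_symmetric_OR_in_C}, and Lemma \ref{lem:span_of_Ux_vectors_is_size_of_x_support} again identifies its rank with the support size $k$. To close the cycle with $\eqref{cond:G_has_cyc_symm_rep_in_Ck} \implies \eqref{cond:exists_rank_k_matrix_in_HplusG}$ I would take a cyclically symmetric representation $r(i)=U^i x$ of rank $k$ and invoke the forward direction of Theorem \ref{thm:gram_matrix_equivalence}, which makes the Gram matrix $M$ of $r(0), \ldots, r(n-1)$ positive semidefinite Hermitian of rank $k$ with graph $G$. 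The only remaining point is that $M$ is circulant, which follows from $M_{ij} = \langle U^i x, U^j x \rangle = \langle U^{i-j} x, x \rangle$: unitarity of $U$ gives the second equality, and $U^n = I$ makes the value depend only on the residue of $i-j$ modulo $n$.

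I do not expect a serious obstacle, precisely because the content has been front-loaded into the earlier results; the genuinely nontrivial passage—from an arbitrary positive semidefinite circulant to the canonical form $U_n^i x$—is exactly what Proposition \ref{prop:psd_circulant_iff_Gram_mx_of_Unxs} supplies, and the rank-versus-support bookkeeping is exactly Lemma \ref{lem:span_of_Ux_vectors_is_size_of_x_support}. The one place demanding a little care inside the proof is the final implication, where the representation is built from an \emph{arbitrary} unitary $U$ rather than the specific $U_n$; there the key observation is simply that unitarity collapses $\langle U^i x, U^j x \rangle$ into a function of $i-j$ alone, so no appeal to the canonical form is needed to see that the Gram matrix is circulant.
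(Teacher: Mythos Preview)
Your proposal is correct and follows the same cycle of implications as the paper (the paper writes it as $(2)\Rightarrow(1)\Rightarrow(3)\Rightarrow(2)$, which is the same cycle started at a different vertex), invoking Proposition~\ref{prop:psd_circulant_iff_Gram_mx_of_Unxs} and Lemma~\ref{lem:span_of_Ux_vectors_is_size_of_x_support} at exactly the same junctures.

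The one place your argument differs is the step $(3)\Rightarrow(2)$: you simply observe that $i\mapsto U_n^ix$, viewed in $\mathbb C^n$, already satisfies Definition~\ref{def:cyclically_symmetric_OR_in_C} and has rank $k$ by Lemma~\ref{lem:span_of_Ux_vectors_is_size_of_x_support}. The paper instead restricts to the support of $x$, forming $\hat x\in\mathbb R^k$ and $\hat U=U_n[\{i:x_i\neq0\}]$, to produce a cyclically symmetric representation living in $\mathbb C^k$ itself. Your shortcut is entirely sufficient for the theorem as stated; the paper's extra work is there because that explicit compression to $\mathbb C^k$ is what drives the subsequent Corollary~\ref{cor:min_dim_for_symmetric_OR_is_mscr}, which identifies $\mscr(G)$ with the smallest \emph{ambient dimension} admitting a cyclically symmetric representation.
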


\begin{proof}
The theorem is proved by establishing that \eqref{cond:G_has_cyc_symm_rep_in_Ck} $\implies$
\eqref{cond:exists_rank_k_matrix_in_HplusG} $\implies$ \eqref{cond:G_has_cyc_symm_rep_in_canonical_form} $\implies$ \eqref{cond:G_has_cyc_symm_rep_in_Ck}.

Suppose first that \eqref{cond:G_has_cyc_symm_rep_in_Ck} holds.  Then there exist a vector $x$ and a unitary matrix $V$ with $V^n=I$ such that $i \mapsto V^ix$ gives  an orthogonal representation for $G$ of rank $k$.
Let $M$ be the Gram matrix of the vectors $x,Vx,V^2x,\ldots,V^{n-1}x$.
As in the proof of Theorem \ref{thm:gram_matrix_equivalence}, $M$ is a positive semidefinite Hermitian matrix with rank $k$ and graph $G$.
By an argument identical to that used in the proof of Proposition \ref{prop:psd_circulant_iff_Gram_mx_of_Unxs}, $M$ is also a circulant matrix. 
Hence, \eqref{cond:exists_rank_k_matrix_in_HplusG} holds.

That \eqref{cond:exists_rank_k_matrix_in_HplusG} implies \eqref{cond:G_has_cyc_symm_rep_in_canonical_form} follows directly from Proposition \ref{prop:psd_circulant_iff_Gram_mx_of_Unxs} and Lemma  \ref{lem:span_of_Ux_vectors_is_size_of_x_support}.

Finally, assume that \eqref{cond:G_has_cyc_symm_rep_in_canonical_form} holds, i.e., that there exists some nonnegative $x \in \mathbb R^n$ with support of size $k$ such that $i \mapsto U_n^ix$ gives an orthogonal representation for $G$.  Let $\hat x \in \mathbb R^k$ be the vector formed from the nonzero coordinates of $x$ and let $\hat U$ be the $k\times k$ unitary matrix that results from deleting the $i$th row and column from $U_n$ precisely when $x_i =0$, i.e., let
\[ \hat U = U_n[\{ i : x_i \not= 0 \}]. \]
Using Observation \ref{obs:mult_Un_by_vector_x}, we have that for every $i,j \in \{0,1,\ldots,n-1\}$,
\begin{equation*}
\langle U_n^i x, U_n^j x \rangle = \sum_{k=0}^{n-1} |x_k|^2 \left(\w^{i-j}\right)^k=
\sum_{
x_k \not=\, 0}
|x_k|^2 \left(\w^{i-j}\right)^{k} = \langle \hat U^i \hat x , \hat U^j \hat x \rangle.
\end{equation*}
Hence, $i \mapsto \hat U^i \hat x$ is an orthogonal representation for $G$ in $\mathbb C^k$.  Moreover, this representation has rank $k$ by Lemma \ref{lem:span_of_Ux_vectors_is_size_of_x_support}.  Thus, \eqref{cond:G_has_cyc_symm_rep_in_Ck} holds.
\end{proof}

As noted in Observation \ref{obs:mscrG_well_defined},
whenever $G$ is a circulant graph, there exists a positive semidefinite Hermitian circulant matrix with graph $G$.  By Theorem \ref{thm:msr_circ_symmetric_rep_equivalence}, then, every circulant graph $G$ has a cyclically symmetric orthogonal representation over $\mathbb C$.  It is also clear that no such orthogonal representation may exist unless $G$ is a circulant graph, so that, although Definition \ref{def:cyclically_symmetric_OR_in_C} made no stipulations on the graph, we have the following.

\begin{obs}\label{prop:circ_graph_iff_symm_OR}
A graph $G$ is a circulant graph if and only if there exists a cyclically symmetric orthogonal representation for $G$ over $\mathbb C$.
\end{obs}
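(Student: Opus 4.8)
The statement is a biconditional, and the plan is to handle its two directions separately, putting essentially all of the real work into the converse.

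For the forward direction---that a circulant graph admits a cyclically symmetric orthogonal representation over $\mathbb{C}$---I would simply assemble results already in hand, as sketched in the paragraph preceding the statement. By Observation \ref{obs:mscrG_well_defined}, any circulant graph $G$ has a positive semidefinite Hermitian circulant matrix with graph $G$, namely its Laplacian. Feeding this into the implication \eqref{cond:exists_rank_k_matrix_in_HplusG} $\implies$ \eqref{cond:G_has_cyc_symm_rep_in_Ck} of Theorem \ref{thm:msr_circ_symmetric_rep_equivalence} immediately produces the desired representation, so no new work is required here.

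The substantive direction is the converse, and here I would start from an arbitrary cyclically symmetric orthogonal representation $r$ for $G$ in $\mathbb{C}^k$ and show that $G$ must satisfy Definition \ref{def:circulant_graph}. By Definition \ref{def:cyclically_symmetric_OR_in_C}, there is a unitary $U$ with $U^n = I$ and a vector $x \in \mathbb{C}^k$ such that $r(i) = U^i x$ for every $i$. The key computation is to show that $\langle r(i), r(j)\rangle$ depends only on the residue of $i-j$ modulo $n$: using that $U$ is unitary (so $U^\ast = U^{-1}$) together with $U^n = I$, one finds $\langle U^i x, U^j x\rangle = x^\ast U^{i-j} x$, and since $U^n = I$ this value is a function of $i-j \bmod n$ alone.

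With that computation in place, the conclusion follows by unwinding definitions: two distinct vertices $i$ and $j$ are adjacent precisely when $\langle r(i), r(j)\rangle \neq 0$, and by the above this holds or fails according only to $i-j \bmod n$, so setting $S = \{k \in \{1,\ldots,n-1\} : x^\ast U^k x \neq 0\}$ exhibits $G$ as $\circulant{n}{S}$. The one point requiring a moment's care---and the closest thing to an obstacle---is verifying that $S$ is closed under additive inverse modulo $n$, as required for $S$ to specify a genuine undirected circulant graph; but this is automatic, since the Gram matrix of the vectors $U^i x$ is Hermitian, giving $x^\ast U^{-k} x = \overline{x^\ast U^k x}$ and hence $x^\ast U^k x \neq 0 \iff x^\ast U^{-k} x \neq 0$. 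Everything else is routine.
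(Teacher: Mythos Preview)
Your proposal is correct and follows essentially the same approach as the paper. The paper handles the forward direction exactly as you do (citing Observation~\ref{obs:mscrG_well_defined} and Theorem~\ref{thm:msr_circ_symmetric_rep_equivalence}) and then simply asserts that the converse is ``clear''; your write-up just makes that converse explicit via the computation $\langle U^i x, U^j x\rangle = x^\ast U^{i-j} x$, which is precisely the argument the paper has in mind (and is essentially the same calculation used in the proof of Proposition~\ref{prop:psd_circulant_iff_Gram_mx_of_Unxs}).
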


Just as the particulars of the proof of Theorem \ref{thm:gram_matrix_equivalence} gave Corollary \ref{cor:msr_is_min_k_for_OR}, the argument that condition \eqref{cond:G_has_cyc_symm_rep_in_canonical_form}
implies condition \eqref{cond:G_has_cyc_symm_rep_in_Ck} in the proof of Theorem \ref{thm:msr_circ_symmetric_rep_equivalence} now gives the following analogous result.

\begin{cor}\label{cor:min_dim_for_symmetric_OR_is_mscr}
    Let $G$ be a circulant graph and let $k$ be the smallest integer such that $G$ has a cyclically symmetric orthogonal representation in $\mathbb C^k$.  Then $k=\mscr(G)$.
\end{cor}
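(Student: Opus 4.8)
The plan is to read the equality off directly from Theorem \ref{thm:msr_circ_symmetric_rep_equivalence} together with the definition of $\mscr(G)$, by establishing two inequalities. Write $m=\mscr(G)$, and let $k$ be the smallest integer such that $G$ has a cyclically symmetric orthogonal representation in $\mathbb C^k$ (this is well-defined by Observation \ref{prop:circ_graph_iff_symm_OR}).

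First I would prove $m \le k$. Given a cyclically symmetric orthogonal representation for $G$ in $\mathbb C^k$, its rank $r$ satisfies $r \le k$ since the spanning subspace sits inside $\mathbb C^k$. As this is a cyclically symmetric orthogonal representation of rank $r$, condition \eqref{cond:G_has_cyc_symm_rep_in_Ck} of Theorem \ref{thm:msr_circ_symmetric_rep_equivalence} holds with that value, so by the equivalence condition \eqref{cond:exists_rank_k_matrix_in_HplusG} furnishes a positive semidefinite Hermitian circulant matrix of rank $r$ with graph $G$. Hence $\mscr(G) \le r \le k$, that is, $m \le k$.

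For the reverse inequality $k \le m$, I would start from a positive semidefinite Hermitian circulant matrix of rank $m$ with graph $G$, which exists by the definition of $\mscr$. Applying the chain of implications in the proof of Theorem \ref{thm:msr_circ_symmetric_rep_equivalence} — specifically \eqref{cond:exists_rank_k_matrix_in_HplusG} $\implies$ \eqref{cond:G_has_cyc_symm_rep_in_canonical_form} $\implies$ \eqref{cond:G_has_cyc_symm_rep_in_Ck} — produces a cyclically symmetric orthogonal representation for $G$. The crucial point, exactly as flagged in the sentence preceding the statement, is that the construction witnessing \eqref{cond:G_has_cyc_symm_rep_in_canonical_form} $\implies$ \eqref{cond:G_has_cyc_symm_rep_in_Ck} deletes precisely the coordinates on which $x$ vanishes, yielding the representation $i \mapsto \hat U^i \hat x$ with $\hat x \in \mathbb R^m$ and $\hat U$ an $m\times m$ matrix. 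This is a cyclically symmetric orthogonal representation living in $\mathbb C^m$, so the minimal admissible dimension satisfies $k \le m$. Combining the two inequalities gives $k = m = \mscr(G)$.

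The only content beyond routine bookkeeping is the gap between the \emph{rank} of a representation and the \emph{dimension} of its ambient space: the first inequality uses that a representation in $\mathbb C^k$ has rank at most $k$, while the second uses that a cyclically symmetric representation of rank $m$ can be realized in $\mathbb C^m$ with no wasted dimensions. This is precisely the phenomenon that allowed Corollary \ref{cor:msr_is_min_k_for_OR} to follow from Theorem \ref{thm:gram_matrix_equivalence}, and here it is supplied by the specific form of the \eqref{cond:G_has_cyc_symm_rep_in_canonical_form} $\implies$ \eqref{cond:G_has_cyc_symm_rep_in_Ck} step, whose output by design spans a space of dimension equal to the support size of $x$, namely the rank. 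I expect no genuine obstacle; the work is entirely in aligning these two quantities.
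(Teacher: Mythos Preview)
Your proposal is correct and follows essentially the same approach as the paper: the paper simply points to the \eqref{cond:G_has_cyc_symm_rep_in_canonical_form} $\implies$ \eqref{cond:G_has_cyc_symm_rep_in_Ck} step of Theorem \ref{thm:msr_circ_symmetric_rep_equivalence} as supplying the nontrivial inequality $k \le \mscr(G)$, and you have spelled out both that step and the easy reverse inequality in full. There is nothing to add.
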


Hence, given a circulant graph $G$, the problem of finding a positive semidefinite matrix with graph $G$ of smallest rank that is a circulant matrix is equivalent to the problem of finding an orthogonal representation for $G$ in the smallest number of dimensions that is cyclically symmetric.

\end{section}

\section{Connection with polynomials}
\label{sec:polynomial_connection}

Let $G$ be a circulant graph on $n$ vertices.  We next show that finding a cyclically symmetric orthogonal representation for $G$ over $\mathbb C$ is equivalent to finding a polynomial satisfying certain combinatorial conditions determined by $G$.  These conditions are entirely in terms of the values of the polynomial on the complex $n$th roots of unity.  We may therefore bound the degree of the polynomials we consider.

\begin{lem}\label{lem:small_degree_poly}
Let $n\in\mathbb{Z}^+$ and $p\in\mathbb{R}[z]$.
Then there exists a unique polynomial $q\in\mathbb{R}[z]$ with $\deg(q) \le n-1$ such that $p(\w^j)=q(\w^j)$ for all $j\in \mathbb Z$.
  \end{lem}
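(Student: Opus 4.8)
The plan is to exploit the fact that $\w = e^{2\pi i/n}$ is a primitive $n$th root of unity, so that as $j$ ranges over $\mathbb Z$ the quantity $\w^j$ depends only on $j \bmod n$ and takes exactly the $n$ distinct values $1, \w, \w^2, \ldots, \w^{n-1}$. These are precisely the roots of $z^n - 1$, and the requirement ``$p(\w^j)=q(\w^j)$ for all $j \in \mathbb Z$'' is therefore just the requirement that $p$ and $q$ agree at these $n$ points. This reframing reduces the lemma to a standard interpolation statement.

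For existence, I would apply the division algorithm in $\mathbb R[z]$ to $p$ and the monic polynomial $z^n - 1$. Since $\mathbb R$ is a field, there exist $s, q \in \mathbb R[z]$ with
\[
p(z) = (z^n - 1)\,s(z) + q(z), \qquad \deg(q) \le n - 1,
\]
and crucially the coefficients of $q$ are real because both $p$ and $z^n-1$ have real coefficients. Evaluating at $z = \w^j$ and using $(\w^j)^n = (\w^n)^j = 1$, the factor $z^n - 1$ vanishes, so $p(\w^j) = q(\w^j)$ for every $j \in \mathbb Z$. This produces the desired $q$.

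For uniqueness, I would suppose $q_1, q_2 \in \mathbb R[z]$ both have degree at most $n-1$ and agree with $p$ on every $\w^j$. Then $q_1 - q_2$ is a real polynomial of degree at most $n-1$ that vanishes at the $n$ distinct points $1, \w, \ldots, \w^{n-1}$; since a nonzero polynomial of degree at most $n-1$ has at most $n-1$ roots, we conclude $q_1 - q_2 = 0$.

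There is no genuine obstacle here; the proof is routine once the problem is recognized as polynomial interpolation at the $n$th roots of unity. The only points requiring a moment's care are verifying that $\{\w^j : j \in \mathbb Z\}$ really consists of exactly $n$ distinct values (so that a degree-$(n-1)$ interpolant is both determined and unique) and observing that division of real polynomials keeps the remainder in $\mathbb R[z]$, which is what guarantees $q \in \mathbb R[z]$ rather than merely $q \in \mathbb C[z]$.
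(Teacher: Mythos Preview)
Your proof is correct. The paper's existence argument differs slightly in presentation: rather than invoking the division algorithm by $z^n-1$, it writes $p(z)=\sum_i a_i z^i$ and explicitly defines $q$ by ``folding'' the coefficients modulo $n$, setting the coefficient of $z^i$ in $q$ to $a_i+a_{n+i}+a_{2n+i}+\cdots$. Of course this $q$ is exactly the remainder your division algorithm produces, so the two arguments are equivalent; the paper's formulation has the minor advantage that the explicit coefficient formula is referenced again shortly after (following the definition of the normalized coefficient vector). The uniqueness arguments are identical.
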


\begin{proof}
Letting $a_0,a_1,a_2,\ldots$ be the sequence of real numbers with $a_i =0$ for $i > \deg(p)$ such that
\begin{align}
	p(z) &= a_0 + a_1z + a_2z^2 + a_3z^3 + \cdots, \notag
\intertext{take}
	q(z) &= (a_0+a_n+\cdots) + (a_1+a_{n+1}+\cdots)z + \cdots + (a_{n-1}+a_{2n-1}+\cdots)z^{n-1}. \label{eqn:q_poly_from_p}
\end{align}
Then, since $(\w^j)^{kn+i}=(\w^j)^i$ for any $i,j,k\in\mathbb{Z}$, the desired property holds.  Uniqueness follows since the values of $q(z)$ are prescribed for $n \ge \deg(q)+1$ distinct values of $z$.
\end{proof}
  
We ultimately wish to show that an appropriate polynomial gives rise to an orthogonal representation for $G$ by vectors in $\mathbb C^n$.  To this end, we establish a correspondence between polynomials and vectors.

\begin{notation}
We write $\mathbb{R}_{\geq 0}[z]$ for the subset of $\mathbb R[z]$ comprising those polynomials with every coefficient nonnegative.
\end{notation}

\begin{defn}\label{defn:normalized_coef_vector}
Let $p \in \mathbb R_{\geq 0}[z]$.
Lemma \ref{lem:small_degree_poly} gives the existence of a unique $q\in\mathbb{R}_{\geq0}[z]$ with degree at most $n-1$ whose value coincides with that of $p$ on every complex $n$th root of unity.  
Say
\[ q(z) = b_0 + b_1z + b_2z^2 + \cdots + b_{n-1}z^{n-1}, \]
with $b_i=0$ for $\deg(q) < i \le n-1$.
Then the {\it normalized coefficient vector} of $p$ is the vector
\[ \left(\sqrt{b_0},\sqrt{b_1},\ldots,\sqrt{b_{n-1}}\right) \in \mathbb R^n.  \]
\end{defn}

Note that the polynomial $q$ referenced in Definition \ref{defn:normalized_coef_vector} is given explicitly by \eqref{eqn:q_poly_from_p}.  Hence, it is straightforward to compute the normalized coefficient vector of any given $p \in \mathbb R_{\geq 0}[z]$.

\begin{defn}\label{defn:poly_corresponding_to_vector}
Given a nonnegative vector $v\in\mathbb{R}^n$, the {\it polynomial corresponding to v} is
\[ v_0^2 + v_1^2z + v_2^2z^2 + \cdots + v_{n-1}^2z^{n-1} \in\mathbb{R}_{\geq0}[z]. \]
\end{defn}

Definitions \ref{defn:normalized_coef_vector} and \ref{defn:poly_corresponding_to_vector} together give a bijective correspondence between polynomials in $\mathbb{R}_{\geq0}[z]$ of degree at most $n-1$ and nonnegative vectors in $\mathbb{R}^n$.
The motivation for setting up this correspondence is made clear by the following lemma.

\begin{lem}\label{lem:innerProduct_with_U_and_p}
Suppose $p \in \mathbb R_{\geq0}[z]$ and $x = (x_0,x_1,\ldots,x_{n-1}) \in \mathbb R^n$ is its normalized coefficient vector.
Then, for every $i \in \{0,1,\ldots,n-1\}$,
\begin{equation*}
 p(\w^i) =  \sum_{j=0}^{n-1} |x_j|^2 \left(\w^i\right)^j = \langle U_n^i x, x \rangle.
\end{equation*}
\end{lem}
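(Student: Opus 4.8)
The plan is to unwind the definition of the normalized coefficient vector and then invoke the inner-product computation already recorded in Observation \ref{obs:mult_Un_by_vector_x}. By Definition \ref{defn:normalized_coef_vector}, the vector $x$ is built from the unique polynomial $q(z) = b_0 + b_1 z + \cdots + b_{n-1}z^{n-1}$ of degree at most $n-1$ (guaranteed by Lemma \ref{lem:small_degree_poly}) that agrees with $p$ on every complex $n$th root of unity, via $x_j = \sqrt{b_j}$. Since each $b_j$ is a nonnegative real, this gives $|x_j|^2 = b_j$ for every $j$, and this single identity drives the whole argument.

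First I would establish the leftmost equality. Substituting $|x_j|^2 = b_j$ gives $\sum_{j=0}^{n-1}|x_j|^2(\w^i)^j = \sum_{j=0}^{n-1} b_j (\w^i)^j = q(\w^i)$. Because $\w^i$ is itself a complex $n$th root of unity, the defining property of $q$ yields $q(\w^i) = p(\w^i)$, completing this half.

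For the rightmost equality, I would simply apply the inner-product formula \eqref{eqn:UjUj_inner_prod} from Observation \ref{obs:mult_Un_by_vector_x} in the special case where the second index is $0$. Since $U_n^0 = I$, that case reads $\langle U_n^i x, x \rangle = \sum_{k=0}^{n-1}|x_k|^2(\w^i)^k$, which is precisely the middle expression after relabeling the summation index. Chaining the two equalities then gives $p(\w^i) = \langle U_n^i x, x\rangle$ for each $i \in \{0,1,\ldots,n-1\}$.

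I do not anticipate a genuine obstacle here: the lemma is essentially a bookkeeping step that records exactly why the normalized coefficient vector was defined by taking the square roots of the coefficients of $q$. The only point requiring care is to keep straight that $p$ and $q$ need not be equal as polynomials, so one must pass through $q$ and use that $\w^i$ lies among the roots of unity on which $p$ and $q$ were arranged to coincide.
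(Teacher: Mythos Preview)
Your proposal is correct and follows essentially the same approach as the paper's own proof, which simply states that the first equality is a direct consequence of Definition~\ref{defn:normalized_coef_vector} and the second follows from~\eqref{eqn:UjUj_inner_prod}. You have spelled out these two steps in more detail, including the careful remark that one must pass through $q$ to use $p(\w^i)=q(\w^i)$, but the underlying argument is identical.
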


\begin{proof}
Since $x$ is the normalized coefficient vector of $p$, the first equality is a direct consequence of Definition \ref{defn:normalized_coef_vector}, while the second follows from 
\eqref{eqn:UjUj_inner_prod}.
\end{proof}

\begin{prop}\label{prop:polynomial_connection}
Suppose $p \in \mathbb R_{\geq0}[z]$ and $x \in \mathbb R^n$ is its normalized coefficient vector.
Then $i\mapsto U_n^ix$ is a cyclically symmetric orthogonal representation for $\circulant{n}{S}$ if and only if $p$ satisfies
    \begin{equation*}
    p(\w^j)=0 \Longleftrightarrow j\not\in S \text{ for all } j \in \{1,2,\ldots,n-1\}.
    \end{equation*}
\end{prop}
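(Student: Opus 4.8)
The plan is to use Lemma \ref{lem:innerProduct_with_U_and_p} to translate the inner products defining the orthogonal representation directly into evaluations of $p$ at roots of unity, so that the orthogonality condition becomes a condition on the zero set of $p$. The key observation is that the representation $i \mapsto U_n^i x$ captures adjacency in $\circulant{n}{S}$ precisely when the inner product $\langle U_n^i x, U_n^j x\rangle$ is nonzero exactly for those pairs $i,j$ with $i-j \equiv k \bmod n$ for some $k \in S$.

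First I would reduce the two-index condition to a one-index condition. By \eqref{eqn:UjUj_inner_prod}, the inner product $\langle U_n^i x, U_n^j x\rangle = \sum_{k=0}^{n-1}|x_k|^2(\w^{i-j})^k$ depends only on the residue of $i-j$ modulo $n$. Combining this with Lemma \ref{lem:innerProduct_with_U_and_p} gives
\[
\langle U_n^i x, U_n^j x \rangle = p(\w^{\,i-j}),
\]
where the exponent is read modulo $n$ (this is legitimate since $\w^n = 1$). Thus the distinct values among all the off-diagonal inner products are exactly $p(\w^1), p(\w^2), \ldots, p(\w^{n-1})$.

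Next I would unwind the definition of an orthogonal representation. By definition, $i \mapsto U_n^i x$ is an orthogonal representation for $G = \circulant{n}{S}$ if and only if, for all distinct $i,j \in \{0,1,\ldots,n-1\}$, we have $\langle U_n^i x, U_n^j x\rangle \neq 0$ exactly when $\{i,j\} \in E(G)$, i.e., exactly when $i - j \equiv k \bmod n$ for some $k \in S$. Using the identification above, for a fixed nonzero residue $j \in \{1,2,\ldots,n-1\}$ the condition "$\langle U_n^i x, U_n^{i'} x\rangle \neq 0$ whenever $i - i' \equiv j$" is equivalent to "$p(\w^j) \neq 0$," and membership $\{i,i'\} \in E(G)$ for such a pair is equivalent to $j \in S$. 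Running this equivalence over all residues $j$ shows that the representation condition holds if and only if, for every $j \in \{1,2,\ldots,n-1\}$, $p(\w^j) \neq 0 \iff j \in S$, which contrapositively is exactly the stated condition $p(\w^j) = 0 \iff j \notin S$.

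Finally, two bookkeeping points remain. One must confirm that the map really is an orthogonal representation in the formal sense, which requires only that distinct vertices $i \neq i'$ give $i - i' \not\equiv 0 \bmod n$, so the diagonal case $j=0$ (where $p(\w^0) = \langle x,x\rangle > 0$ unless $x=0$) never interferes with the adjacency test. The second point is that $S$ is closed under additive inverse modulo $n$ and $G$ is a genuine circulant, so the condition is consistent with the symmetry $p(\w^j) = \overline{p(\w^{-j})}$; since $p$ has real coefficients and its values at conjugate roots are conjugate, the zero pattern is automatically symmetric under $j \mapsto -j$, matching the symmetry of $S$. The main obstacle, such as it is, is purely notational rather than mathematical: one must keep the modular reduction of exponents straight when passing between the two-index inner product and the single-variable polynomial, and verify that every residue $j \in \{1,\ldots,n-1\}$ is actually realized as $i - i'$ for some admissible pair, which holds trivially since for each such $j$ one may take $i' = 0$ and $i = j$.
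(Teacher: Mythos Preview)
Your proof is correct and follows essentially the same approach as the paper: both reduce the two-index inner product to a single-index one via \eqref{eqn:UjUj_inner_prod} (or unitarity of $U_n$) and then invoke Lemma~\ref{lem:innerProduct_with_U_and_p} to identify $\langle U_n^j x, x\rangle$ with $p(\w^j)$. Your version is simply more expansive, spelling out the bookkeeping about residues and the symmetry of $S$ that the paper leaves implicit.
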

  
\begin{proof}
Note that 
$i \mapsto U_n^i x$ is an orthogonal representation for $\circulant{n}{S}$ if and only if
\[ j \not\in S \Longleftrightarrow 0 = \langle x, U_n^j x \rangle \Longleftrightarrow 0 = \langle U_n^j x, x \rangle = p(\w^j) \]
for every $j \in \{1,2,\ldots,n-1\}$, where the final equality holds by Lemma \ref{lem:innerProduct_with_U_and_p}.
\end{proof}

Now Proposition \ref{prop:polynomial_connection} and Theorem \ref{thm:msr_circ_symmetric_rep_equivalence} can be combined to summarize the connection between polynomials and cyclically symmetric orthogonal representations as follows.

\begin{thm}\label{thm:TFAE_poly_C}
Let $G = \circulant{n}{S}$.  The following are equivalent.
  \begin{enumerate}
        \item There exists a positive semidefinite Hermitian circulant matrix with rank $k$ and graph $G$.
        \item There exists a cyclically symmetric orthogonal representation for $G$ over $\mathbb C$ with rank $k$.
    \item There exists a polynomial $p\in\mathbb R_{\geq0}[z]$ whose normalized coefficient vector has support of size $k$ such that
    \begin{equation}\label{eqn:poly_condition}
    p(\w^j)=0 \Longleftrightarrow j\not\in S \text{ for all } j \in \{1,2,\ldots,n-1\}.
    \end{equation}
    \item There exists a polynomial $p\in\mathbb R_{\geq0}[z]$ of degree at most $n-1$ with exactly $k$ terms such that the condition given in \eqref{eqn:poly_condition} holds.
  \end{enumerate}
  \end{thm}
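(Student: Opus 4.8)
The plan is to treat conditions (1) and (2) as already linked and to use the vector--polynomial correspondence of Definitions \ref{defn:normalized_coef_vector} and \ref{defn:poly_corresponding_to_vector} together with Proposition \ref{prop:polynomial_connection} to fold in conditions (3) and (4). Since the first two conditions here are verbatim the first two conditions of Theorem \ref{thm:msr_circ_symmetric_rep_equivalence}, their equivalence is immediate; moreover that theorem supplies a third equivalent condition, namely the existence of a nonnegative $x \in \mathbb{R}^n$ with support of size $k$ for which $i \mapsto U_n^i x$ is an orthogonal representation for $G$. I would therefore route the polynomial conditions through this intermediate statement, which I will call $(\ast)$, so that it suffices to prove $(\ast) \Leftrightarrow (3)$ and $(3) \Leftrightarrow (4)$.

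First I would establish $(\ast) \Leftrightarrow (3)$ by passing between the vector $x$ of $(\ast)$ and the polynomial $p$ corresponding to it. Given $x$ as in $(\ast)$, take $p$ to be the polynomial corresponding to $x$ in the sense of Definition \ref{defn:poly_corresponding_to_vector}; because $x$ is nonnegative and has degree index at most $n-1$, its normalized coefficient vector is $x$ itself, so that vector has support of size $k$, and Proposition \ref{prop:polynomial_connection} shows that the orthogonal-representation property of $i \mapsto U_n^i x$ is precisely the condition \eqref{eqn:poly_condition}. Conversely, starting from a $p$ as in (3) with normalized coefficient vector $x$, the same proposition converts \eqref{eqn:poly_condition} back into the statement that $i \mapsto U_n^i x$ is an orthogonal representation for $G$, while $x$ is nonnegative with support of size $k$ by hypothesis, giving $(\ast)$.

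Next I would handle $(3) \Leftrightarrow (4)$. The key observation is that \eqref{eqn:poly_condition} depends on $p$ only through its values on the complex $n$th roots of unity, so by Lemma \ref{lem:small_degree_poly} we may replace any $p$ by the unique $q \in \mathbb{R}_{\ge 0}[z]$ of degree at most $n-1$ agreeing with it on those points, without affecting either \eqref{eqn:poly_condition} or the normalized coefficient vector. For such a polynomial of degree at most $n-1$, the number of nonzero terms is exactly the size of the support of its normalized coefficient vector, since that vector is obtained coordinatewise by taking square roots of the coefficients and square roots neither create nor destroy zero entries. Hence ``normalized coefficient vector of support size $k$'' and ``exactly $k$ terms'' coincide, which yields $(3) \Leftrightarrow (4)$ and completes the cycle.

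The main obstacle I anticipate is bookkeeping rather than conceptual: one must verify carefully that the two notions of size align---the support size of the normalized coefficient vector versus the number of terms of the polynomial---and that the reduction to degree at most $n-1$ via the explicit formula \eqref{eqn:q_poly_from_p} is harmless for \eqref{eqn:poly_condition}. No genuinely hard step arises, as the substance of the theorem is already contained in Theorem \ref{thm:msr_circ_symmetric_rep_equivalence} and Proposition \ref{prop:polynomial_connection}; the present result is essentially their juxtaposition through the vector--polynomial bijection.
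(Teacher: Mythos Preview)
Your proposal is correct and follows essentially the same approach as the paper, which simply states that the result is obtained by combining Theorem \ref{thm:msr_circ_symmetric_rep_equivalence} and Proposition \ref{prop:polynomial_connection}; you have merely spelled out the bookkeeping that the paper leaves implicit.
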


\begin{ex}\label{ex:C4_over_C}
Consider the 4-cycle, $C_4=\circulant{4}{\{\pm 1\}}$.  In the notation of Theorem \ref{thm:TFAE_poly_C}, $n=4$, so that $\w=i$.
The polynomial $p(z)=z+1$ has $p(\w^3)=0$ while $p(\w^1)$ and $p(\w^{-1})$ are nonzero.
Hence, the condition given in \eqref{eqn:poly_condition} is met, so that Theorem \ref{thm:TFAE_poly_C} applies.  Thus, as 
$\deg(p) =1 \le n-1$ and $p$ has $2$ terms, there must exist a cyclically symmetric orthogonal representation for the $4$-cycle of rank $2$, so that $\mscr(C_4) \le 2$.  In particular,
as the normalized coefficient vector of $p$ is $(1,1,0,0) \in \mathbb R^4$,
the vectors
\[ x=\begin{bmatrix}1\\1\\0\\0\end{bmatrix}, \quad U_4x=\begin{bmatrix}1\\i\\0\\0\end{bmatrix}, \quad U_4^2x=\begin{bmatrix}1\\-1\\0\\0\end{bmatrix}, \quad\text{and}\quad U_4^3x=\begin{bmatrix}1\\-i\\0\\0\end{bmatrix} \]
form a cyclically symmetric orthogonal representation for $C_4$ by Proposition \ref{prop:polynomial_connection}.  Since $x$ has support of size $2$, by Lemma \ref{lem:span_of_Ux_vectors_is_size_of_x_support} the rank of this representation should be $2$, which in fact is visibly the case.
Finally, it is easy to verify that $\mr(C_4)=2$, so that in fact
\[ \mr(C_4)=\msr(C_4)=\mscr(C_4)=2.\]
Hence, the $4$-cycle is an example of a graph for which there is a positive semidefinite circulant matrix achieving the minimum rank, and hence the minimum semidefinite rank as well.
\end{ex}

As Example \ref{ex:C4_over_C} illustrates, Proposition \ref{prop:polynomial_connection} allows the construction of a cyclically symmetric orthogonal representation for a given circulant graph on $n$ vertices in terms of a polynomial $p$ with nonnegative coefficients that vanishes on a corresponding selection of the complex $n$th roots of unity.  
Such a polynomial can be taken with degree at most $n-1$, and then the rank of the resulting representation is simply the number of terms appearing in $p$.

This naturally leads  to an interest in the following question.  Given precisely which complex $n$th roots of unity are zeros of a certain polynomial with nonnegative coefficients, how few terms may appear in that polynomial?
In addressing this question, a useful upper bound is provided by the following lemma, the proof of which
rests on a fundamental result of convex geometry.

\begin{lem}\label{lem:poly_from_caratheodory}
Let $W$ be a self-conjugate set of complex $n$th roots of unity with $1 \not\in W$.  Then there exists a polynomial $p\in\mathbb{R}_{\geq0}[z]$ with $deg(p)\le n-1$ 
and at most $|W|+1$ terms
such that $p(w)=0$ for all $w\in W$.
\end{lem}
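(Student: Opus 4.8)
The plan is to realize the coefficients of $p$ as the weights of a convex combination and then invoke Carathéodory's theorem to bound the number of nonzero weights. First I would fix the correspondence between polynomials and coefficient vectors: writing $p(z) = \sum_{k=0}^{n-1} c_k z^k$ with each $c_k \ge 0$, the number of terms of $p$ is exactly the number of indices $k$ with $c_k \neq 0$, and $p$ vanishes on $W$ precisely when $\sum_{k=0}^{n-1} c_k P_k = 0$, where $P_k = (w^k)_{w \in W} \in \mathbb{C}^{W}$ records the value of the monomial $z^k$ at each point of $W$. Thus it suffices to exhibit nonnegative weights $c_0, \ldots, c_{n-1}$, not all zero and with at most $|W|+1$ of them nonzero, such that $\sum_k c_k P_k = 0$.

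Next I would observe that the origin already lies in the convex hull of $\{P_0, \ldots, P_{n-1}\}$. Indeed, since $1 \notin W$, every $w \in W$ satisfies $w \neq 1$ and $w^n = 1$, so $\sum_{k=0}^{n-1} w^k = 0$; reading this off coordinatewise gives $\sum_{k=0}^{n-1} P_k = 0$, and hence $\frac{1}{n}\sum_{k=0}^{n-1} P_k = 0$ expresses $0$ as a uniform convex combination of the $P_k$. This already guarantees a nonnegative relation among the $P_k$; the work that remains is to sparsify it.

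The key step, and the one producing the sharp bound $|W|+1$ rather than the naive $2|W|+1$, is to record that the $P_k$ do not range over all of $\mathbb{C}^{W} \cong \mathbb{R}^{2|W|}$ but lie in a real subspace of dimension only $|W|$; this is exactly where self-conjugacy enters. Let $V \subseteq \mathbb{C}^{W}$ be the set of vectors $v$ with $v_{\bar w} = \overline{v_w}$ for every $w \in W$. Since $W = \overline{W}$ and the coefficients are real, each $P_k$ lies in $V$, and $V$ is a real vector space of dimension $|W|$, contributing one real coordinate for each real element of $W$ (only $-1$ can occur, as $1 \notin W$) and two for each conjugate pair. Applying Carathéodory's theorem inside $V \cong \mathbb{R}^{|W|}$ to the convex representation of $0$ found above then yields nonnegative weights, summing to $1$ and hence not all zero, with at most $|W|+1$ of them nonzero; the corresponding $p \in \mathbb{R}_{\ge 0}[z]$ has degree at most $n-1$, at most $|W|+1$ terms, and vanishes on all of $W$. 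The point demanding the most care is this dimension count for $V$: I must verify that vanishing of the combination as an element of $V$ genuinely forces $p(w)=0$ for each individual $w \in W$, including both members of each conjugate pair, which holds because the coordinates of $V$ faithfully encode each conjugate pair.
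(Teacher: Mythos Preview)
Your proposal is correct and follows essentially the same approach as the paper: both arguments show that the origin lies in the convex hull of the ``monomial vectors'' in a real space of dimension $|W|$ (because $\sum_k w^k=0$ for each $w\in W$) and then apply Carath\'eodory's theorem. The only difference is cosmetic: the paper writes out explicit real coordinates $(\Re(\alpha_j^i),\Im(\alpha_j^i))$, whereas you work abstractly with the conjugation-invariant subspace $V\subseteq\mathbb{C}^W$ and count its real dimension.
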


\begin{proof}
Assume without loss of generality that $W\not=\emptyset$ and take $\alpha_1,\ldots,\alpha_k \in \mathbb C\setminus \mathbb R$ such that
\[
    W\setminus\{-1\} = \{\alpha_1,\overline{\alpha_1},\ldots,\alpha_k,\overline{\alpha_k}\}.
\]
For each $i \in \mathbb Z$, let
\begin{align*}
        v_i = \left((-1)^i,\Re(\alpha_1^i),\Im(\alpha_1^i),\ldots,\Re(\alpha_k^i),\Im(\alpha_k^i) \right) &\in \mathbb R^{|W|} \\
\intertext{if $-1 \in W$, and otherwise let}
        v_i = \left(\Re(\alpha_1^i),\Im(\alpha_1^i),\ldots,\Re(\alpha_k^i),\Im(\alpha_k^i) \right) &\in \mathbb R^{|W|}.
\end{align*}
Every element of $W$ is a root of $z^{n-1}+z^{n-2}+\cdots+z+1$, and it follows that $\sum_{i=0}^{n-1} v_i = 0$.  In particular, $0 \in \mathbb R^{|W|}$ is in the convex hull of the $v_i$ vectors.  It follows by Carath\'eodory's Theorem (see, e.g., \cite[Theorem 2.3]{barvinok}) that there exist nonnegative $b_0,b_1,\ldots,b_m \in \mathbb R$ such that $\sum_{i=0}^{n-1} b_i v_i = 0$ with $|\{i : b_i \neq 0 \}| \le |W|+1$.  Hence, taking
    \[
    p(z) = b_0 + b_1z + b_2z^2 + \cdots + b_{n-1}z^{n-1}
    \]
gives a polynomial as desired.
\end{proof}

Note that standard proofs of Carath\'eodory's Theorem are both elementary and constructive.  Hence, under the hypotheses of Lemma \ref{lem:poly_from_caratheodory}, the polynomial that is asserted to exist can in fact be calculated in a finite number of steps.

When the goal is to apply Theorem \ref{thm:TFAE_poly_C}, the serious limitation of Lemma \ref{lem:poly_from_caratheodory} is that nothing about the result or its proof provides any guarantee as to which complex $n$th roots of unity are {\em not} zeros of the promised polynomial.  Under some     circumstances, however, this limitation can be overcome, as we will see in the next section.

  \begin{section}{Minimum circulant rank for particular classes of circulants}\label{sec:mcr_for_families}

In general, determination of the minimum semidefinite rank $\msr(G)$ for a particular graph $G$ is a difficult problem; the necessary upper and lower bounds may both be difficult to obtain.

A remarkable upper bound that holds in general was proved by probabilistic methods in \cite{LovaszSaksSchrijver1989,LovaszSaksSchrijver1989CORRECTION} and gives the following connection between $\msr(G)$ and the vertex connectivity of $G$, namely the smallest number of vertices whose deletion from $G$ leaves a disconnected graph, which we denote by $\kappa(G)$.

\begin{thm}[{\cite[Corollary 1.4]{LovaszSaksSchrijver1989}}]\label{thm:connectivity_bound}
For every graph $G$ on $n$ vertices, $\msr(G) \le n - \kappa(G)$.
\end{thm}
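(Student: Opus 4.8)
The plan is to use Corollary \ref{cor:msr_is_min_k_for_OR}, which reduces the problem to exhibiting an orthogonal representation for $G$ over $\mathbb C$ in $d$ dimensions, where I set $k = \kappa(G)$ and $d = n - k$. I would construct such a representation by a genericity (probabilistic) argument: assign to the vertices vectors $r(v) \in \mathbb C^d$ (indeed one can work in $\mathbb R^d$) chosen as generically as possible subject only to the orthogonality constraints $\langle r(u), r(v)\rangle = 0$ forced by the non-edges of $G$, and then argue that such a generic choice is automatically \emph{faithful}, i.e., that $\langle r(u), r(v)\rangle \ne 0$ for every edge $uv$. A representation in $\mathbb C^d$ has rank at most $d$, so producing any faithful one yields $\msr(G) \le d$.

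First I would check feasibility, i.e., that nonzero vectors meeting the non-edge constraints exist. Ordering the vertices arbitrarily, I assign $r(v)$ one at a time, requiring $r(v)$ to lie in the orthogonal complement of the already-assigned vectors of the non-neighbors of $v$. Since $\kappa(G)\le\delta(G)$ always holds, every vertex has at least $k$ neighbors and hence at most $n-1-k = d-1$ non-neighbors; thus at each step the relevant orthogonal complement has dimension at least $d-(d-1)=1$, so a nonzero choice is always available. This shows the orthogonality system is consistent in dimension $d$, but it uses only the minimum-degree bound.

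The main obstacle is faithfulness. The tuples $(r(v))_{v\in V} \in (\mathbb C^d)^n$ satisfying the non-edge constraints form an algebraic variety $\mathcal V$, and I must produce a point of $\mathcal V$ at which, in addition, $\langle r(u),r(v)\rangle \ne 0$ for every edge $uv$; equivalently, for each edge $uv$ the locus $\{\langle r(u),r(v)\rangle = 0\}$ must fail to contain the component of $\mathcal V$ from which the representation is built. The danger is concentrated at ``tight'' vertices: if $v$ has degree exactly $k$ and all its non-neighbors are assigned before it, then $r(v)$ is pinned (up to scale) as the unique vector orthogonal to those $d-1$ vectors, and faithfulness then forces this pinned direction to meet each neighbor of $v$ non-orthogonally. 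This is precisely where genuine $k$-connectivity, rather than merely $\delta(G)\ge k$, is essential. I would control these degeneracies by a rank/transversal estimate on the bilinear orthogonality system restricted to vertex subsets, using Menger's theorem to show that the $k$ vertices lying outside any candidate degenerate set cannot separate the pattern of orthogonality constraints enough to force an edge-orthogonality identically on $\mathcal V$. I expect this connectivity-driven estimate (equivalently, the statement that a generic orthogonal representation satisfying the non-edge constraints is in general position) to be the technical heart of the argument.

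Once no edge constraint vanishes identically on the relevant component of $\mathcal V$, each is a proper Zariski-closed condition, so their finite union is avoided by some point of $\mathcal V$. Such a point is a faithful orthogonal representation for $G$ in $\mathbb C^d$, whence Corollary \ref{cor:msr_is_min_k_for_OR} gives $\msr(G) \le d = n - \kappa(G)$. The reduction and the feasibility count are routine; the Menger-theoretic argument ruling out forced edge-orthogonality is the crux.
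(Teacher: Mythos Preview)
The paper does not prove this theorem; it is quoted verbatim as \cite[Corollary 1.4]{LovaszSaksSchrijver1989} and attributed to a probabilistic argument of Lov\'asz, Saks, and Schrijver. So there is no ``paper's own proof'' to compare against, only the original source.

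Your outline is in the right spirit and matches the Lov\'asz--Saks--Schrijver strategy: build a generic orthogonal representation in $\mathbb C^{n-\kappa(G)}$ satisfying the non-edge constraints, and argue that genericity forces faithfulness on the edges. You have correctly located the one nontrivial step, namely showing that no edge inner product vanishes identically on the relevant component of the constraint variety, and you have correctly noted that this is where $\kappa(G)$ (not merely $\delta(G)$) must enter. However, as written the proposal stops exactly at that point: ``a rank/transversal estimate on the bilinear orthogonality system \ldots\ using Menger's theorem'' is a description of the shape of the argument, not the argument itself. The actual proof in \cite{LovaszSaksSchrijver1989} requires a careful inductive construction (or an explicit measure-zero computation) that exploits $k$-connectivity to guarantee that at each stage the partially built representation remains in general position; this is genuinely delicate and was the subject of a published correction \cite{LovaszSaksSchrijver1989CORRECTION}. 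Until you supply that step, what you have is a correct plan rather than a proof.
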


In particular, let $G=\circulant{n}{S}$.  Since deleting all neighbors of a single vertex is sufficient to disconnect the graph, certainly $\kappa(G) \le |S|$.  When equality holds, Theorem \ref{thm:connectivity_bound} gives $\msr(G) \le n-|S|$.  Interestingly, results of \cite{vanDoorn1986} show that in fact $\kappa(G)=|S|$ does hold under certain conditions, including both when $G$ is a consecutive circulant (see Definition \ref{def:consecutive_circulant}) and when $n$ is prime. Therefore, we have the following.

\begin{thm}\label{thm:msr_inequality_for_prime_and_consecutive}
Let $G=\circulant{n}{S}$.  If $G$ is a consecutive circulant or $n$ is prime, then $\msr(G) \le n-|S|$.
\end{thm}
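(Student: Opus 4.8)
The plan is to deduce the bound from the connectivity inequality of Theorem \ref{thm:connectivity_bound} by computing the vertex connectivity $\kappa(G)$ exactly for the two families in question. The key reduction is this: since $G = \circulant{n}{S}$ is $|S|$-regular, deleting the $|S|$ neighbors of any single vertex isolates that vertex and disconnects $G$ (provided $G$ is not complete), so $\kappa(G) \le |S|$ always holds. To turn the general inequality $\msr(G) \le n - \kappa(G)$ into the desired $\msr(G) \le n - |S|$, what I need is the reverse inequality $\kappa(G) \ge |S|$, i.e., that $G$ achieves the maximum connectivity possible for a graph of its degree.

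First I would record the trivial bound $\kappa(G) \le |S|$, which is immediate for any circulant. The substantive step is then to establish $\kappa(G) = |S|$ in each of the two cases, and for this I would appeal directly to the results of \cite{vanDoorn1986}, which compute the vertex connectivity of circulant graphs and in particular show that equality with the degree holds both when $S$ is a set of consecutive residues (so that $G$ is a consecutive circulant) and whenever $n$ is prime. Once $\kappa(G) = |S|$ is in hand, Theorem \ref{thm:connectivity_bound} yields $\msr(G) \le n - \kappa(G) = n - |S|$ immediately.

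The only nontrivial ingredient, and hence the main obstacle, is the lower bound $\kappa(G) \ge |S|$, which is supplied entirely by the external reference. For consecutive circulants this reflects the classical fact that a power of a cycle attains connectivity equal to its degree; for prime $n$ the relevant phenomenon is that every nonempty $S$ generates all of $\mathbb Z_n$, which forces the circulant to be highly connected. Since these connectivity computations are already established in \cite{vanDoorn1986}, no new argument is required, and the content of the theorem lies simply in packaging this known connectivity result together with Theorem \ref{thm:connectivity_bound}.
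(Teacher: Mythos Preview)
Your proposal is correct and follows essentially the same approach as the paper: observe the trivial bound $\kappa(G)\le |S|$, invoke the results of \cite{vanDoorn1986} to obtain $\kappa(G)=|S|$ in both the consecutive and prime cases, and then apply Theorem~\ref{thm:connectivity_bound}. The paper presents this argument in the paragraph immediately preceding the theorem statement rather than as a separate proof, but the content is identical.
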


Also appearing in \cite{LovaszSaksSchrijver1989} is the so-called Delta Conjecture, attributed to
 Maehara,
which asserts that in fact $\msr(G) \le n-\delta$ for every graph on $n$ vertices with minimum degree $\delta$.  Since $\circulant{n}{S}$ is regular of degree $|S|$, the truth of this conjecture would imply that the bound of Theorem \ref{thm:msr_inequality_for_prime_and_consecutive} in fact holds for {\it all} circulant graphs.

In what follows, we strengthen the result of Theorem \ref{thm:msr_inequality_for_prime_and_consecutive} by showing first that $\msr(G)$ can be replaced with $\mscr(G)$, and then that, moreover, this modification actually gives equality.
  In particular, we prove this under the hypothesis that $G$ is a consecutive circulant in 
Subsection \ref{subsec:consecutive_circulants}, and under the hypothesis that $n$ is prime in Subsection \ref{subsec:mscr_for_prime_order_circulants}.  Moreover, our methods of proof are constructive, so that, when the hypothesis of Theorem \ref{thm:msr_inequality_for_prime_and_consecutive} holds, a matrix achieving the minimum of $\mscr(G)$ can be found in a finite number of steps.

\subsection{Computing $\mscr(G)$ for consecutive circulants}
\label{subsec:consecutive_circulants}

Before we proceed, a remark about lower bounds is also in order.  In particular, 
a combinatorial graph parameter called the {\it zero forcing number} of $G$, introduced in \cite{AIM08} and denoted by $Z(G)$, provides a lower bound for the minimum rank over any field.

\begin{thm}[{\cite[Proposition 2.4]{AIM08}}]\label{thm:ZF_lower_bound}
    Let $G$ be a graph on $n$ vertices and let $K$ be a field.  Then $\mr^K(G) \ge n-Z(G)$.
\end{thm}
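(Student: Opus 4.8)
The plan is to recall the zero forcing mechanism and then to exhibit, for an arbitrary symmetric matrix $A$ over $K$ with graph $G$, an injection from the null space of $A$ into $K^B$, where $B$ is a minimum zero forcing set; this immediately bounds the nullity of $A$ by $Z(G)$ and hence its rank below by $n-Z(G)$. Recall that a \emph{zero forcing set} is a subset $B \subseteq V(G)$ such that, when the vertices of $B$ are initially colored black and all others white, repeated application of the color-change rule---whereby a black vertex with exactly one white neighbor forces that neighbor to become black---eventually colors every vertex black; and $Z(G)$ is the minimum cardinality of such a set. First I would fix a symmetric matrix $A\in K^{n\times n}$ with graph $G$ and a zero forcing set $B$ with $|B|=Z(G)$.

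The key claim will be that if $x\in\ker A$ satisfies $x_i=0$ for every $i\in B$, then $x=0$. I would prove this by induction along a fixed sequence of forces that colors all of $V(G)$, showing that the coordinate of $x$ at each newly forced vertex vanishes. Suppose that at some stage a black vertex $u$ forces its unique white neighbor $w$. At this stage every black vertex has already been shown to have a zero coordinate in $x$ (the base case being $B$); in particular $x_u=0$, and $x_v=0$ for every neighbor $v$ of $u$ other than $w$, since $w$ is the \emph{only} white neighbor of $u$. Reading off row $u$ of the equation $Ax=0$ then gives
\[
0 = \sum_{j=0}^{n-1} A_{uj}x_j = A_{uu}x_u + \sum_{v\in N(u)} A_{uv}x_v = A_{uw}x_w,
\]
and since $\{u,w\}\in E(G)$ forces $A_{uw}\neq 0$ while $K$ has no zero divisors, this yields $x_w=0$. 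When the forcing process terminates every vertex is black, and hence $x=0$, establishing the claim.

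Granting the claim, the restriction map $\ker A \to K^B$ sending $x$ to $(x_i)_{i\in B}$ is injective, so $\dim\ker A \le |B| = Z(G)$. Therefore $\rank A = n - \dim\ker A \ge n - Z(G)$. Since $A$ was an arbitrary symmetric matrix over $K$ with graph $G$, taking the minimum over all such $A$ yields $\mr^K(G) \ge n - Z(G)$, as desired.

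I expect the main obstacle to be the careful bookkeeping in the inductive step: one must be certain that at the precise moment $u$ forces $w$, the coordinate $x_u$ and the coordinates at all neighbors of $u$ except $w$ have already been shown to vanish. This is exactly what the color-change rule guarantees---$u$ is black, so its coordinate is zero by the inductive hypothesis, and $w$ is its unique white neighbor, so every other neighbor of $u$ is black and hence also has zero coordinate---but the argument hinges on organizing the induction along a fixed valid sequence of forces rather than over the vertices in an arbitrary order.
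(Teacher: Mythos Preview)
Your argument is correct and is in fact the standard proof of this result, exactly as it appears in the original reference \cite{AIM08}. Note, however, that the present paper does not supply its own proof of this theorem: it is stated with a citation to \cite[Proposition 2.4]{AIM08} and used as a black box. So there is no ``paper's proof'' to compare against; your write-up simply reproduces the argument from the cited source, and it does so accurately. The inductive bookkeeping you flag as a potential obstacle is handled properly---at the moment $u$ forces $w$, the color-change rule guarantees that $u$ and every neighbor of $u$ other than $w$ are already black, so their coordinates have already been shown to vanish by the inductive hypothesis, and the row-$u$ equation of $Ax=0$ reduces cleanly to $A_{uw}x_w=0$.
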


For consecutive circulant graphs, the zero forcing number behaves in a predictable way.  In particular, the following is straightforward to show.

\begin{thm}
\label{thm:ZF_easy_for_consecutive}
If $G=\circulant{n}{S}$ is a consecutive circulant, then $Z(G)=|S|$.
\end{thm}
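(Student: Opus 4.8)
The plan is to establish the two inequalities $Z(G)\le|S|$ and $Z(G)\ge|S|$ separately, exploiting throughout that $G=\circulant{n}{S}$ is regular of degree $|S|$.

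For the lower bound I would first record the general fact that $Z(H)\ge\delta(H)$ for every graph $H$, where $\delta(H)$ is the minimum degree. The argument is short: if $B$ is a zero forcing set with $B\ne V(H)$, then at least one color change must occur, and the first one consists of some black vertex $u$ forcing a white vertex $w$. By the color-change rule, every neighbor of $u$ other than $w$ already lies in $B$, so $\{u\}\cup\bigl(N(u)\setminus\{w\}\bigr)\subseteq B$. Since $u\notin N(u)$ and $w\in N(u)$, these are $\deg(u)$ distinct vertices, whence $|B|\ge\deg(u)\ge\delta(H)$ (and the case $B=V(H)$ is trivial). As $G$ is $|S|$-regular we have $\delta(G)=|S|$, so $Z(G)\ge|S|$.

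For the upper bound I would exhibit an explicit zero forcing set of size $|S|$. Write $S=\{\pm1,\pm2,\ldots,\pm k\}$. If $2k=n$, then $G=K_n$ and $|S|=n-1$, and any $n-1$ vertices form a zero forcing set, settling this case directly. Otherwise $|S|=2k<n$, and I claim the consecutive block $B=\{0,1,\ldots,2k-1\}$ forces. I would prove by induction on $t$ that the black set can be enlarged to the consecutive arc $\{0,1,\ldots,2k-1+t\}$ for each $0\le t\le n-2k$. Given the arc $\{0,\ldots,m\}$ with $m=2k-1+t\le n-2$, the vertex $v=k+t$ lies in the arc (since $k\ge 1$) and its neighborhood is precisely $\{t,t+1,\ldots,m+1\}\setminus\{v\}$, all of whose members except $m+1$ are black; hence $v$ forces $m+1$, extending the arc by one. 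When $t=n-2k$ every vertex is black, so $B$ is a zero forcing set and $Z(G)\le 2k=|S|$.

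The only places demanding care are the routine residue computations in the propagation step: that the neighborhood of $v=k+t$ really is the integer interval $[t,m+1]$ with $v$ removed, that this interval involves no wraparound modulo $n$ (which holds because $0\le t$ and $m+1=2k+t\le n-1$, so all the relevant integers already lie in $\{0,1,\ldots,n-1\}$), and that $v$ itself sits inside the current black arc. I expect this bookkeeping to be the main, though minor, obstacle; the lower bound is immediate once the degree bound $Z(G)\ge\delta(G)$ is in hand, and combining the two inequalities yields $Z(G)=|S|$.
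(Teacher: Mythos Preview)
Your proof is correct. The paper does not actually supply a proof of this theorem; it merely states the result, remarking that it ``is straightforward to show.'' Your argument is exactly the kind of direct verification the authors evidently had in mind: the lower bound $Z(G)\ge\delta(G)=|S|$ is the standard degree bound for zero forcing, and the upper bound comes from the obvious consecutive block $\{0,1,\ldots,|S|-1\}$ propagating one vertex at a time around the cycle. The residue bookkeeping you flag (no wraparound because $0\le t$ and $m+1\le n-1$; the forcing vertex $v=k+t$ lies in the current arc since $k\le v\le m$) is handled correctly, and the boundary case $2k=n$ (where $G=K_n$ and $|S|=n-1$) is dealt with separately as it should be.
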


Combining Theorems \ref{thm:ZF_lower_bound} and \ref{thm:ZF_easy_for_consecutive} with Theorem \ref{thm:msr_inequality_for_prime_and_consecutive} gives that
\[
n-|S| \le \mr(G) \le \msr(G) \le n-|S|
\]
for any consecutive circulant graph $G=\circulant{n}{S}$, so that, in particular, equality holds throughout, i.e., $\mr(G)=\msr(G)=n-|S|$.  Theorem \ref{thm:consCircC}, which follows,
strengthens this result by showing that $\mscr(G)=n-|S|$ as well.  Hence, for a consecutive circulant graph, the minimum rank
over $\mathbb C$
can always be achieved by a positive semidefinite circulant matrix.  (Example \ref{ex:4cycle_over_R} will show that this does not hold over $\mathbb R$, however.)

Since our goal is to show the existence of an appropriate circulant matrix, we turn our attention to constructing polynomials of the kind required to invoke Theorem \ref{thm:TFAE_poly_C}.  Since these must be polynomials with nonnegative coefficients, we will find the following result, which we quote verbatim from \cite{BDPW}, to be very useful.

\begin{thm}[{\cite[Theorem 1.1]{BDPW}}]\label{thm:pNonneg}
Let $p$ be a polynomial of degree $N$, $p(0)=1$, with  nonnegative coefficients and zeros $z_1, z_2, \ldots, z_N$.  For $t\geq 0$ write
\[
p_t(z) = \prod_{\substack{1\leq j \leq N\\ \left|\Arg(z_j)\right|>t}} \left(1-z/z_j\right).
\]
Then if $p_t\neq p$, all of the coefficients of $p_t$ are positive.
\end{thm}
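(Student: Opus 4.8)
The plan is to work with the factorization $p(z)=\prod_{j=1}^N\left(1-z/z_j\right)$, which is legitimate since $p(0)=1$, and to peel the deleted factors off one conjugate pair at a time. First I would record the elementary structural facts forced by the hypotheses. Because every coefficient of $p$ is nonnegative and $p(0)=1$, we have $p(x)\ge 1>0$ for every $x\ge 0$, so $p$ has no zero on $[0,\infty)$; because the coefficients are real, the nonreal zeros occur in conjugate pairs, and any real zero is negative and hence has $\left|\Arg(z_j)\right|=\pi$. Consequently the factors deleted in forming $p_t$, namely those with $\left|\Arg(z_j)\right|\le t$, are precisely a collection of conjugate pairs of nonreal zeros of small argument, and $p_t\ne p$ holds exactly when at least one such pair is deleted (for $t<\pi$; the case $t\ge\pi$ leaves $p_t=1$, whose sole coefficient is positive).

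Next I would reduce the theorem to a single step. Order the deleted pairs by increasing argument and remove them from $p$ one at a time. After each removal the surviving zeros have the property that the next pair to be deleted has argument no larger than that of any remaining nonreal zero. Thus it suffices to prove the following key lemma and then iterate: \emph{if $q\in\mathbb{R}_{\ge0}[z]$ has $q(0)=1$ and $w=\rho e^{i\theta}$ (with $0<\theta<\pi$) is a zero of $q$ whose argument $\theta$ is minimal in absolute value among all nonreal zeros of $q$, then $h(z)=q(z)/\bigl[(1-z/w)(1-z/\bar w)\bigr]$ has all coefficients positive.} Starting from $p$ (which has merely nonnegative coefficients) the first application of the lemma already yields strict positivity, and every later application has as input a polynomial with positive—hence nonnegative—coefficients, so the hypotheses are maintained throughout. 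Carrying out the deletions down to $p_t$ then gives the theorem.

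For the key lemma I would pass to the generating function. Writing $(1-z/w)(1-z/\bar w)=1-\tfrac{2\cos\theta}{\rho}z+\tfrac{1}{\rho^2}z^2$ and using the Dirichlet/Chebyshev kernel expansion
\[
\frac{1}{(1-z/w)(1-z/\bar w)}=\sum_{m\ge 0}\frac{\sin\bigl((m+1)\theta\bigr)}{\sin\theta}\,\rho^{-m}z^m,
\]
one finds, with $q_j$ the coefficients of $q$, that $h_k=\sum_{j=0}^{k}q_j\,\rho^{-(k-j)}\sin\bigl((k-j+1)\theta\bigr)/\sin\theta$. Since $\sin\theta>0$, proving $h_k>0$ is equivalent to proving
\[
\sum_{j=0}^{k}\bigl(q_j\rho^{j}\bigr)\,\sin\bigl((k-j+1)\theta\bigr)>0\qquad(0\le k\le N-2).
\]
The endpoints are immediate ($h_0=q_0=1$, and the leading coefficient equals $q_N\rho^2>0$), and the identity $q(\bar w)=0$ shows that the analogous full sum over $0\le j\le N$ vanishes, so the head and tail partial sums agree up to sign.

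The crux—and the step I expect to be the main obstacle—is to show that this alternating trigonometric partial sum never drops to zero or below. Here the minimality of $\theta$ must be used in an essential way: without it the quotient can genuinely acquire a negative coefficient, and the assumption that $q$ itself has nonnegative coefficients is equally indispensable. The approach I would take is a strong induction on $k$ driven by the recurrence $h_k=\tfrac{2\cos\theta}{\rho}h_{k-1}-\tfrac{1}{\rho^2}h_{k-2}+q_k$, reading the pair $(h_k,h_{k-1})$ as the orbit of the rotation-by-$\theta$ companion map perturbed by the nonnegative impulses $q_k$, and arguing that because no zero of $q$ has argument below $\theta$ the free rotation cannot carry the orbit through a half-turn into the region $\{h_k\le 0\}$ before the next nonnegative impulse arrives. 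Converting this geometric picture into a rigorous sign bound on the partial sums above is the heart of the matter.
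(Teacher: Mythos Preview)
The paper does not supply its own proof of this statement; it is quoted verbatim from \cite{BDPW} and used as a black box, so there is no argument in the paper against which to compare your proposal.

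On the proposal itself: the reduction to peeling off one conjugate pair of minimal argument at a time is sound, the iteration is set up correctly (each application of the lemma returns a polynomial in $\mathbb{R}_{\ge 0}[z]$ with constant term $1$ whose remaining zero of smallest argument is exactly the next pair to be removed), and the generating-function computation leading to the trigonometric partial sums is accurate, including the head/tail symmetry coming from $q(\bar w)=0$. But the proof is genuinely incomplete at precisely the point you flag. The rotation-plus-nonnegative-impulses description of the recurrence $h_k=\tfrac{2\cos\theta}{\rho}h_{k-1}-\tfrac{1}{\rho^2}h_{k-2}+q_k$ is only a heuristic: you have not shown how the hypothesis that $\theta$ is minimal among the arguments of the zeros of $q$ translates into any concrete inequality that prevents the orbit from entering $\{h_k\le 0\}$. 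Nonnegativity of the $q_k$ alone cannot do this (as you yourself note), and the constraint on the \emph{other} zeros must enter in a specific way; supplying that step is the entire substance of the theorem, and it is not supplied here. As it stands, what you have is a plausible strategy rather than a proof.
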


Taking $p(z)=z^{n-1}+z^{n-2}+\cdots+z+1$ as the initial polynomial $p$ of Theorem \ref{thm:pNonneg}, we obtain that any monic polynomial $q$ whose zeros are precisely
$\w^{k+1},\w^{k+2},\ldots,\w^{n-k-1}$ for some $k$ with $1 \le k < \lfloor n/2 \rfloor$ will have only positive coefficients.  That observation is the essential content of the following lemma.

\begin{lem}\label{lem:pNonNegC}
Whenever $n$ and $k$ are integers with
$1 \le k < \lfloor n/2 \rfloor$,
the polynomial $p(z)=\prod_{j=k+1}^{n-k-1}(z-\w^j)$ has degree $n-2k-1$ and positive real coefficients.
\end{lem}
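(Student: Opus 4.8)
The plan is to verify the three assertions of the lemma—the degree, the realness, and the positivity of the coefficients—in turn, with the positivity being the only substantive point; it will be obtained from Theorem~\ref{thm:pNonneg} exactly as the paragraph preceding the lemma suggests. The degree is immediate: the product defining $p$ contains one linear factor for each $j \in \{k+1, k+2, \ldots, n-k-1\}$, and there are $(n-k-1)-(k+1)+1 = n-2k-1$ such indices, so $\deg p = n-2k-1$.

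For the coefficients I would apply Theorem~\ref{thm:pNonneg} to the polynomial $z^{n-1}+z^{n-2}+\cdots+z+1$, which takes the value $1$ at $0$, has all coefficients nonnegative, and has as its zeros precisely the complex $n$th roots of unity other than $1$, namely $\w^1, \w^2, \ldots, \w^{n-1}$. The crux is to identify which of these zeros survive in the resulting $p_t$. Writing $\Arg$ for the principal argument valued in $(-\pi,\pi]$, one checks that $|\Arg(\w^j)| = \tfrac{2\pi}{n}\min(j, n-j)$ for each $j \in \{1,\ldots,n-1\}$. Taking the threshold to be $t = 2\pi k/n$, the factor attached to $\w^j$ is retained in $p_t$ exactly when $\min(j,n-j) > k$, that is, exactly when $k+1 \le j \le n-k-1$. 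This yields
\[ p_t(z) = \prod_{j=k+1}^{n-k-1}\left(1 - z/\w^j\right). \]

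Because $k \ge 1$ gives $\deg p_t = n-2k-1 < n-1$, we have $p_t \neq z^{n-1}+\cdots+1$, so Theorem~\ref{thm:pNonneg} ensures that every coefficient of $p_t$ is positive. To finish, I would pass from $p_t$ back to the lemma's $p$: applying $1 - z/\w^j = -\w^{-j}(z-\w^j)$ to each factor shows that $p_t = c\,p$, where $c = \prod_{j=k+1}^{n-k-1}(-\w^{-j})$ is the leading coefficient of $p_t$. Since $c$ is a coefficient of the positively-coefficiented polynomial $p_t$, it is a positive real number, and hence $p = c^{-1}p_t$ has coefficients that are all real and positive, as claimed.

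The step I expect to be the main obstacle is pinning down the threshold $t$: since Theorem~\ref{thm:pNonneg} discards factors according to the size of their argument rather than their index, I must verify the formula $|\Arg(\w^j)| = \tfrac{2\pi}{n}\min(j,n-j)$ and confirm that $t = 2\pi k/n$ separates the roots $\w^{k+1},\ldots,\w^{n-k-1}$ to be kept from the roots $\w^{\pm 1},\ldots,\w^{\pm k}$ to be discarded. Once $p_t$ is correctly identified, the remaining work—the degree count and the extraction of the positive scalar $c$ relating $p_t$ to $p$—is routine.
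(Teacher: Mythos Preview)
Your proof is correct and follows essentially the same approach as the paper: both apply Theorem~\ref{thm:pNonneg} to the polynomial $z^{n-1}+\cdots+z+1$ and identify the resulting $p_t$ with the lemma's $p$ up to a positive scalar. Your version is more explicit about the threshold $t$, the argument formula, and the scalar $c$, while the paper simply writes the quotient expression \eqref{eqn:C_over_p_expression} and invokes the theorem directly.
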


\begin{proof}
Note that
\begin{equation}\label{eqn:C_over_p_expression}
p(z) = \frac{\prod_{j=1}^{n-1}(z-\w^j)}{\prod_{j=1}^k (z-\w^j)(z-\w^{n-j})}
= \frac{z^{n-1}+\cdots + z^1+1}{\prod_{j=1}^k (z-\w^j)(z-\overline{\w^j})}.
\end{equation}
Hence, $p(z)$ has positive real coefficients by Theorem \ref{thm:pNonneg}.  Moreover, since the numerator and denominator in the rightmost expression of \eqref{eqn:C_over_p_expression} clearly have degree $n-1$ and $2k$, respectively, the degree of $p(z)$ must be $n-2k-1$.
\end{proof}

Given the polynomial supplied by Lemma  \ref{lem:pNonNegC}, we may now apply Theorem \ref{thm:TFAE_poly_C} to obtain the following result.
  
\begin{thm}\label{thm:consCircC}
If $G=\circulant{n}{S}$ is a consecutive circulant graph, then
\[
\mscr(G)=\msr(G)=\mr(G)=n-|S|.
\]
\end{thm}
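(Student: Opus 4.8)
The plan is to observe that the only piece of the claimed equality not already in hand is the upper bound $\mscr(G) \le n - |S|$. Indeed, the chain $n-|S| \le \mr(G) \le \msr(G) \le n-|S|$ established just before the theorem (via the zero forcing bound of Theorem \ref{thm:ZF_lower_bound} together with Theorems \ref{thm:ZF_easy_for_consecutive} and \ref{thm:msr_inequality_for_prime_and_consecutive}) already gives $\mr(G) = \msr(G) = n-|S|$, and the definitional inequality $\msr(G) \le \mscr(G)$ (see Figure \ref{fig:mr_32_flavors}) gives $\mscr(G) \ge n-|S|$. Hence it suffices to produce a positive semidefinite Hermitian circulant matrix of rank $n-|S|$ with graph $G$, which by Theorem \ref{thm:TFAE_poly_C} amounts to exhibiting a suitable polynomial.

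Write $S = \{\pm 1, \pm 2, \ldots, \pm k\}$ with $1 \le k \le \lfloor n/2 \rfloor$ as in Definition \ref{def:consecutive_circulant}. First I would dispatch the boundary case $k = \lfloor n/2 \rfloor$: here all residues $1, 2, \ldots, n-1$ are represented in $S$, so $G = K_n$ and $|S| = n-1$, whence Observation \ref{obs:complete_graph_min_rank} gives $\mscr(G) = 1 = n - |S|$ immediately.

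For the remaining case $1 \le k < \lfloor n/2 \rfloor$, note that $|S| = 2k$ and that $\{1, \ldots, n-1\} \setminus S = \{k+1, k+2, \ldots, n-k-1\}$. The plan is to feed the polynomial supplied by Lemma \ref{lem:pNonNegC} into Theorem \ref{thm:TFAE_poly_C}. That is, take $p(z) = \prod_{j=k+1}^{n-k-1}(z - \w^j)$, which by Lemma \ref{lem:pNonNegC} has degree $n-2k-1$ and strictly positive real coefficients, so $p \in \mathbb{R}_{\geq 0}[z]$ with exactly $\deg(p) + 1 = n - 2k = n - |S|$ terms. Since its roots among the complex $n$th roots of unity are precisely $\w^{k+1}, \ldots, \w^{n-k-1}$, we have $p(\w^j) = 0 \iff j \notin S$ for all $j \in \{1, \ldots, n-1\}$, which is exactly the condition \eqref{eqn:poly_condition}. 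Applying the implication from the polynomial condition to the existence of a positive semidefinite Hermitian circulant matrix in Theorem \ref{thm:TFAE_poly_C} then yields such a matrix of rank $n - |S|$ with graph $G$, so $\mscr(G) \le n - |S|$, completing the chain of equalities.

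The key conceptual work has already been carried out in assembling Theorem \ref{thm:TFAE_poly_C} and Lemma \ref{lem:pNonNegC}, so no serious obstacle remains. The two points demanding care are that the strict hypothesis $k < \lfloor n/2 \rfloor$ of Lemma \ref{lem:pNonNegC} forces the separate treatment of the boundary (which reassuringly collapses to the complete graph), and that the passage from ``all coefficients positive'' to ``exactly $n - |S|$ terms'' must land on $n-|S|$ precisely, so that the rank of the constructed representation matches the already-known value of $\msr(G)$ and pins the inequality to equality.
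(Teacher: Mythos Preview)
Your proposal is correct and follows essentially the same approach as the paper: both dispose of the complete-graph case via Observation~\ref{obs:complete_graph_min_rank}, then for $1 \le k < \lfloor n/2 \rfloor$ invoke Lemma~\ref{lem:pNonNegC} to produce a polynomial with $n-2k$ positive terms satisfying~\eqref{eqn:poly_condition}, apply Theorem~\ref{thm:TFAE_poly_C} to obtain $\mscr(G) \le n-|S|$, and close the chain with the zero forcing lower bound from Theorems~\ref{thm:ZF_lower_bound} and~\ref{thm:ZF_easy_for_consecutive}.
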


\begin{proof}
If $G$ is a complete graph, then $|S|=n-1$,
and
the result follows
by Observation \ref{obs:complete_graph_min_rank}.
Hence, assume $G$ is not a complete graph.
Then
$S=\{\pm 1,\pm 2,\ldots,\pm k\}$ for some
integer $k$ with $1 \le k < \lfloor n/2 \rfloor$.
Consider the polynomial $p(z)=\prod_{j=k+1}^{n-k-1}(z-\w^j)$.  Lemma \ref{lem:pNonNegC} implies that $p$ has positive real coefficients
and $\deg(p) = n-2k-1 \le n-1$.
Therefore, exactly $n-2k$ terms appear in $p$.
Moreover, it is easy to verify that $p$ and $S$  satisfy the condition given in \eqref{eqn:poly_condition}.  Hence, Theorem \ref{thm:TFAE_poly_C} gives the existence of a positive semidefinite Hermitian circulant matrix with rank $n-2k$ and graph $G$.  Thus,
\[
\mscr(G) \le n-2k = n-|S|.
\]
Meanwhile, Theorem \ref{thm:ZF_easy_for_consecutive} gives
\[
n-|S| \le \mr(G) \le \msr(G) \le \mscr(G)
\]
for the corresponding lower bound.
\end{proof}
  
Note that, since the $4$-cycle is a consecutive circulant, Example \ref{ex:C4_over_C} may now be seen as a special case of Theorem \ref{thm:consCircC}.

\subsection{Computing $\mscr(G)$ for circulants of prime order}
\label{subsec:mscr_for_prime_order_circulants}

When the goal is to show that a given circulant graph has a cyclically symmetric orthogonal representation of small rank,
Theorem \ref{thm:TFAE_poly_C} allows this to be accomplished by showing that there is a polynomial in $\mathbb R_{\ge 0}[z]$ with a small number of terms whose zeros intersect the complex roots of unity in a way precisely determined by the edges of the graph.

Lemma \ref{lem:poly_from_caratheodory} allows us to place an upper bound on the number of terms that must appear in a polynomial with nonnegative coefficients that vanishes on any desired set of complex $n$th roots of unity.
Unfortunately, the polynomial provided by Lemma \ref{lem:poly_from_caratheodory} may have ``extra" zeros at other, undesired $n$th roots of unity.
Here we investigate the situation further and resolve this issue for the case in which the graph has a prime number of vertices.

Recall from Definition \ref{def:fourier_matrix} that $\hat F_n$ is the Vandermonde matrix of the complex $n$th roots of unity.  Hence, for a vector $(b_0,b_1,\ldots,b_{n-1})\in \mathbb R^n$, the results of evaluating the polynomial
\[
p(z) = b_0 + b_1z +  \cdots + b_{n-1}z^{n-1}
\]
at every $n$th root of unity are precisely the coordinates of the vector $\hat F_nb$.  That is, $p(\w^i) = (\hat F_nb)_i$ for each $i\in\{0,1,\ldots,n-1\}$.
Finding a polynomial which satisfies the condition given in \eqref{eqn:poly_condition}
is therefore equivalent to finding a nonnegative vector in $\mathbb R^n$ which is orthogonal to the $j$th row of $F_n$ precisely when $j\in \{1,2,\ldots,n-1\}\setminus S$.

The proof of the following lemma demonstrates the value in this point of view.

\begin{lem}\label{lem:polynomial_k_terms_bound_on_omega_zeros}
Let $p$ be prime.  A polynomial in $\mathbb R[z]$ with $k$ terms cannot vanish on more than $k-1$ complex $p$th roots of unity.
\end{lem}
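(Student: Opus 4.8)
The plan is to exploit the linear-algebraic reformulation introduced just above the lemma: evaluating a polynomial at the $p$-th roots of unity amounts to multiplying its coefficient vector by $\hat F_p$, so that the zeros of a polynomial among the $p$-th roots of unity correspond exactly to the rows of $\hat F_p$ that annihilate its coefficient vector. First I would reduce to a convenient normal form. Since the value of a polynomial at any $p$-th root of unity is unchanged upon reducing it modulo $z^p-1$, and this reduction (which merely collects exponents sharing a common residue modulo $p$) cannot increase the number of terms, I may assume the polynomial in question has degree at most $p-1$; write it as $f(z)=\sum_{i=1}^{k}a_iz^{e_i}$ with each $a_i\ne 0$ and with $e_1,\ldots,e_k$ distinct elements of $\{0,1,\ldots,p-1\}$.

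Next, suppose toward a contradiction that $f$ vanishes at $m\ge k$ of the $p$-th roots of unity, say at $\w^{j_1},\ldots,\w^{j_m}$ with $j_1,\ldots,j_m$ distinct elements of $\{0,1,\ldots,p-1\}$. The condition $f(\w^{j_s})=0$ for each $s$ is precisely the linear system $Va=0$, where $a=(a_1,\ldots,a_k)^{\mathsf T}$ and $V$ is the $m\times k$ matrix with $V_{s,i}=\w^{j_s e_i}$; note that $V$ is a submatrix of $\hat F_p$, obtained by keeping the rows indexed by $j_1,\ldots,j_m$ and the columns indexed by $e_1,\ldots,e_k$. Because $a\ne 0$, the columns of $V$ are linearly dependent. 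Choosing any $k$ of the $m\ge k$ rows therefore yields a $k\times k$ submatrix $V'$ of $\hat F_p$ whose columns remain dependent via the same vector $a$, so that $\det V'=0$.

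The key point---and the only place where the primality of $p$ enters---is that this is impossible. By a classical theorem of Chebotarev on the roots of unity, when $p$ is prime every square submatrix of the Vandermonde matrix $\hat F_p=(\w^{jk})_{0\le j,k\le p-1}$ is nonsingular. Since $V'$ is exactly such a submatrix, $\det V'\ne 0$, contradicting the conclusion of the previous paragraph. Hence no such $m\ge k$ can exist, and $f$ vanishes on at most $k-1$ of the $p$-th roots of unity, as claimed.

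The routine parts of this argument are the reduction to degree at most $p-1$ and the passage from a dependent $m\times k$ matrix to a singular $k\times k$ minor; the substantive ingredient, which I expect to be the crux, is Chebotarev's nonvanishing-of-minors theorem, since it is precisely this fact (false for composite $n$) that encodes the hypothesis that $p$ is prime. An alternative and essentially equivalent route would be to invoke the uncertainty principle for functions on $\mathbb Z/p\mathbb Z$: a nonzero function whose Fourier transform is supported on $k$ frequencies must itself be nonzero at all but at most $k-1$ points, which, upon identifying $f$ with its restriction to the $p$-th roots of unity, yields the bound directly. Either formulation rests on the same prime-order input.
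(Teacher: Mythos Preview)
Your proof is correct and follows essentially the same approach as the paper: reduce to degree at most $p-1$, reinterpret the vanishing conditions as a singular $k\times k$ submatrix of $\hat F_p$, and derive a contradiction from the fact that every square submatrix of $F_p$ is nonsingular when $p$ is prime. The only cosmetic difference is that you attribute this last fact to Chebotarev, whereas the paper cites \cite[Theorem 4]{Delvaux_Barel2008} for the same result.
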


\begin{proof}
Let $\w = e^{2\pi i/p}$.
Suppose $q \in \mathbb R[z]$ has $k$ terms and let
\[
S = \{ 0 \le j \le p-1 : q(\w^j)=0 \}.
\]
Assume for the sake of contradiction that $|S| \ge k$.

By Lemma \ref{lem:small_degree_poly}, we may assume without loss of generality that $q$ has degree at most $p-1$.
Then there exists $(b_0,b_1,\ldots,b_{p-1}) \in \mathbb R^p$ with
$q(z) = b_0 + b_1z + \cdots + b_{p-1}z^{p-1}$.
Let $y=\hat F_pb$, so that $q(\w^i)=y_i$ for each $i\in\{0,1,\ldots,p-1\}$,
let $\hat S$ be any subset of $S$ with $|\hat S| = k$,
and let
$A$ be the $k\times k$ matrix given by $A=\hat F_p[\hat S, \{ i : b_i\not=0 \}]$.

Let $\hat b$ be the vector in $\mathbb{R}^k$  obtained from $b$ by deleting all zero coordinates.  Then $A\hat b=0$ and so $A$ is singular.  But this contradicts the fact \cite[Theorem 4]{Delvaux_Barel2008} that every square submatrix of $F_p$ is nonsingular.
\end{proof}

When the result on polynomials provided by Lemma \ref{lem:polynomial_k_terms_bound_on_omega_zeros} is translated via Theorem \ref{thm:TFAE_poly_C} to give information about cyclically symmetric orthogonal representations, it provides the key to both the lower and upper bounds necessary to establish the following result.

\begin{thm}
If $p$ is prime and $G=\circulant{p}{S}$, then $\mscr(G) = p-|S|$.
\end{thm}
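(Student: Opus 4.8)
The plan is to establish the two inequalities $\mscr(G) \ge p - |S|$ and $\mscr(G) \le p - |S|$ separately, using Theorem \ref{thm:TFAE_poly_C} to translate everything into statements about polynomials in $\mathbb{R}_{\ge 0}[z]$ and then invoking the prime-specific Lemma \ref{lem:polynomial_k_terms_bound_on_omega_zeros}. The central observation is that $\mscr(G)$ equals the minimum number of terms in a polynomial $p \in \mathbb{R}_{\ge 0}[z]$ of degree at most $p-1$ whose zeros among $\{\w^1,\ldots,\w^{p-1}\}$ are exactly those $\w^j$ with $j \notin S$; this is precisely the equivalence of conditions (1) and (4) in Theorem \ref{thm:TFAE_poly_C}.

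For the \textbf{lower bound}, I would argue as follows. Let $A$ be a positive semidefinite Hermitian circulant matrix with graph $G$ and rank equal to $\mscr(G)$. By Theorem \ref{thm:TFAE_poly_C}, there is a polynomial $q \in \mathbb{R}_{\ge 0}[z]$ with exactly $\mscr(G)$ terms satisfying the condition in \eqref{eqn:poly_condition}. This means $q$ must vanish at $\w^j$ for every $j \in \{1,2,\ldots,p-1\} \setminus S$, which is a set of size $(p-1) - |S|$ complex $p$th roots of unity. By Lemma \ref{lem:polynomial_k_terms_bound_on_omega_zeros}, a polynomial with $k$ terms vanishes on at most $k-1$ complex $p$th roots of unity, so we must have $\mscr(G) - 1 \ge (p-1) - |S|$, giving $\mscr(G) \ge p - |S|$.

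For the \textbf{upper bound}, the goal is to exhibit a polynomial in $\mathbb{R}_{\ge 0}[z]$ with at most $p - |S|$ terms that vanishes at exactly the roots $\w^j$ for $j \notin S$ and at no other root $\w^j$ with $j \in S$. The starting point is Lemma \ref{lem:poly_from_caratheodory}: taking $W = \{\w^j : j \in \{1,\ldots,p-1\} \setminus S\}$, which is self-conjugate (since $S$ is closed under additive inverse mod $p$) and does not contain $1$, that lemma produces a polynomial $r \in \mathbb{R}_{\ge 0}[z]$ of degree at most $p-1$ with at most $|W| + 1 = (p-1) - |S| + 1 = p - |S|$ terms that vanishes on all of $W$. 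The difficulty, flagged in the text immediately after Lemma \ref{lem:poly_from_caratheodory}, is that this polynomial might have \emph{extra} zeros at roots $\w^j$ with $j \in S$, which would violate the forward direction of \eqref{eqn:poly_condition}. Resolving this is where the primality is again essential and is the main obstacle. The key point is that $r$ has at most $p - |S|$ terms, so by Lemma \ref{lem:polynomial_k_terms_bound_on_omega_zeros} it vanishes on at most $(p - |S|) - 1 = (p-1) - |S| = |W|$ roots of unity; but it already vanishes on all $|W|$ elements of $W$, so it can have \emph{no} additional zeros among the $p$th roots of unity. Hence $r(\w^j) \ne 0$ for every $j \in S$, and $r$ satisfies \eqref{eqn:poly_condition} exactly.

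Finally, I would combine the two bounds. The upper bound polynomial $r$ has at most $p - |S|$ terms and satisfies \eqref{eqn:poly_condition}, so by Theorem \ref{thm:TFAE_poly_C} there is a positive semidefinite Hermitian circulant matrix with graph $G$ whose rank is at most $p - |S|$, giving $\mscr(G) \le p - |S|$; together with the lower bound this yields $\mscr(G) = p - |S|$. I expect the main obstacle to be the upper bound, specifically the clean matching of the term-count bound from Carath\'eodory (Lemma \ref{lem:poly_from_caratheodory}) against the zero-count bound from primality (Lemma \ref{lem:polynomial_k_terms_bound_on_omega_zeros}): the two bounds must coincide exactly so that the polynomial produced by the convex-geometry argument is automatically forced to have no spurious roots, and verifying that the self-conjugacy hypothesis of Lemma \ref{lem:poly_from_caratheodory} is met by $W$ (and that $1 \notin W$) requires care since it depends on $S$ being closed under negation modulo $p$.
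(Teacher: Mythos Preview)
Your proposal is correct and follows essentially the same approach as the paper's proof: both directions use Theorem~\ref{thm:TFAE_poly_C} to reduce to polynomials, the upper bound combines Lemma~\ref{lem:poly_from_caratheodory} with Lemma~\ref{lem:polynomial_k_terms_bound_on_omega_zeros} exactly as you describe (the paper is terser, merely citing the latter lemma to rule out extra zeros without spelling out the counting argument you give), and the lower bound is identical. The only cosmetic difference is that the paper proves the upper bound first.
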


\begin{proof}
Let $\w = e^{2\pi i/p}$ and
let $W=\{ \w^j : j\in \{1,2,\ldots,p-1\}\setminus S \}$.  Note that $W$ is self-conjugate.  Therefore,  Lemma \ref{lem:poly_from_caratheodory} gives a polynomial $q$ with $\deg(q) \le p-1$ and at most $|W|+1 = (p-1-|S|)+1 = p-|S|$ terms such that
$q(w)=0$ for every $w \in W$.
Moreover, Lemma \ref{lem:polynomial_k_terms_bound_on_omega_zeros} gives  $q(\w^j)\not=0$ for every $j \in S$.  
Hence, the
condition given in \eqref{eqn:poly_condition} is satisfied, so that by Theorem \ref{thm:TFAE_poly_C} there exists a positive semidefinite Hermitian circulant matrix with rank $p-|S|$ and graph $G$.  Thus, $\mscr(G) \le p-|S|$.

For the reverse inequality, suppose there exists some positive semidefinite Hermitian circulant matrix with rank $k$ and graph $G$.  Then, by Theorem  \ref{thm:TFAE_poly_C}, there is a polynomial $q \in \mathbb R_{\ge 0}[z]$ 
of degree at most $p-1$ with exactly $k$ terms such that the condition given in \eqref{eqn:poly_condition} holds, i.e.,
\[
q(\w^j)=0 \Longleftrightarrow j\not\in S \text{ for all } j \in \{1,2,\ldots,p-1\}.
\]
In particular, $q$ is zero on at least $p-1-|S|$ complex $p$th roots of unity.
It follows by Lemma \ref{lem:polynomial_k_terms_bound_on_omega_zeros} that $k \geq (p-1-|S|)+1 = p-|S|$.
  \end{proof}

  \end{section}

\section{Minimum circulant rank over $\mathbb{R}$}
\label{sec:results_over_the_reals}

In this section we investigate the parameter defined for a circulant graph $G$ as the smallest rank over all {\it real symmetric} positive semidefinite circulant matrices with graph $G$.
Of course, the inequality $\mscr(G) \le \mscrREAL(G)$ trivially holds.
From \cite{van_der_holst} and Example \ref{ex:5cycle_cyclically_symm_rep},
we have for the $5$-cycle that
\[
3 \le \msr(C_5) \le \mscr(C_5) \le \mscrREAL(C_5) \le 3,
\]
showing that equality does occur for some circulant graphs.  On the other hand, the following example shows that the $4$-cycle is a circulant for which equality does not hold.

\begin{ex}\label{ex:4cycle_over_R}
Consider the 4-cycle, $C_4=\circulant{4}{\{\pm 1\}}$.  Over $\mathbb{C}$, we have by either Example \ref{ex:C4_over_C} or the more general result of Theorem \ref{thm:consCircC} that
\[
\mr(C_4)=\msr(C_4)=\mscr(C_4)=2.
\]
Furthermore, it is easily verified that the vectors
\[ \begin{bmatrix}1\\0\end{bmatrix}, \quad \begin{bmatrix}1\\1\end{bmatrix}, \quad \begin{bmatrix}0\\1\end{bmatrix}, \quad\text{and}\quad \begin{bmatrix}1\\-1\end{bmatrix} \]
form an orthogonal representation for $C_4$ in $\mathbb R^2$. Hence, we have in addition that
\[
\mrREAL(C_4)=\msrREAL(C_4)=2.
\]
On the other hand, $\mscrREAL(C_4)\neq 2$.  To see this, let $M$ be a real symmetric circulant matrix with graph $C_4$.  Then
\[ M =
\begin{bmatrix}
b & a & 0 & a \\
a & b & a & 0 \\
0 & a & b & a \\
a & 0 & a & b
\end{bmatrix}
\]
for some $a,b \in \mathbb{R}$.  When $a \neq 0$ and $b=0$, $M$ has rank $2$.  It follows that $\mcrREAL(C_4)=2$.  For $M$ to be positive semidefinite, we may assume without loss of generality that $b=1$.  This leaves $M$ with a characteristic polynomial of
\[  	
x^{4} - 4 x^{3} + (-4 \, a^{2} + 6) x^{2} + (8 \, a^{2}
- 4) x - (4 a^2 - 1).
\]
Hence, for $M$ to be singular requires that $4a^2-1=0$, and this gives $8a^2-4\not=0$, so that the rank of the matrix is $3$.  Hence, $\mscrREAL(C_4) = 3$.
\end{ex}

We now begin a development parallel to that of Section \ref{sec:orthogonal_representations_and_symmetry}, except over $\mathbb R$.  One of the central tools in Section \ref{sec:orthogonal_representations_and_symmetry} was the canonical form provided for a cyclically symmetric orthogonal representation by condition \eqref{cond:G_has_cyc_symm_rep_in_canonical_form} of Theorem \ref{thm:msr_circ_symmetric_rep_equivalence}.  With a view toward establishing an analog of this canonical form, we next introduce a real orthogonal matrix that will play the same role as that of the unitary matrix $U_n$ of Notation \ref{not:U_n}.

\begin{notation}
For any $\theta \in \mathbb R$, let $R_\theta$ denote the $2\times 2$ rotation matrix
\[R_\theta=\left[\begin{array}{cc}\cos\theta & -\sin\theta\\ \sin\theta & \phantom{-}\cos\theta\end{array}\right]
= \begin{bmatrix}
\Re(e^{i\theta}) & \Im(e^{-i\theta}) \\
\Im(e^{i\theta}) & \Re(e^{-i\theta})
\end{bmatrix}
.\]
When $n$ is understood to be a positive integer, we write $A_n$ for the $n \times n$ matrix that has all entries zero with the exceptions that $A_{jj}=1$ when $j=0$, that $A_n[\{j,n-j\}] = R_{2j\pi/n}$
for $1\leq j < \frac{n}{2}$, and that $A_{jj} = -1$ for $j=n/2$ when $n$ is even.
For $k\in\mathbb{Z}$, we have $R_\theta^k=R_{k\theta}$, and so
\[ A_n^k[\{j,n-j\}]
= R_{\frac{2j\pi}n}^k
= R_{\frac{2jk\pi}n}
=
\begin{bmatrix}\cos\left({\scriptstyle \frac{2jk\pi}n}\right) & -\sin\left({\scriptstyle \frac{2jk\pi}n}\right)\\ \sin\left({\scriptstyle \frac{2jk\pi}n}\right) & \phantom{-}\cos\left({\scriptstyle \frac{2jk\pi}n}\right)\end{bmatrix}
= \begin{bmatrix}
\Re(\w^{jk}) & \Im(\w^{(n-j)k}) \\
\Im(\w^{jk}) & \Re(\w^{(n-j)k})
\end{bmatrix}.\]
Note that $A_n$ is permutationally similar to a direct sum of $2 \times 2$ block rotation matrices, a single $1\times 1$ block whose entry is $1$ and, when $n$ is even, another $1\times 1$ block whose entry is $-1$.
In particular, since every block of $A_n$ is orthogonal and has eigenvalues that are $n$th roots of unity, $A_n$ is itself orthogonal, and $A_n^n=I$.
\end{notation}

Just as the Gram matrix of the vectors $x,U_nx,U_n^2x,\ldots,U_n^{n-1}x$ played a central role in Section \ref{sec:orthogonal_representations_and_symmetry}, the Gram matrix of the vectors $x,A_nx,A_n^2x,\ldots,A_n^{n-1}x$ will be critical to what follows.  The following lemma gives the relationship between these two matrices.

\begin{lem}\label{lem:gram_mx_Axs_real_part_of_gram_mx_Uxs}
Let $x=(x_0,x_1,\ldots,x_{n-1}) \in \mathbb R^n$
.  Then, for every $i,j\in\{0,1,\ldots,n-1\}$,
\[ \langle A_n^i x, A_n^j x \rangle = \Re\left( \langle U_n^i x, U_n^j x \rangle \right). \]
\end{lem}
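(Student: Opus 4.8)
The plan is to compute both sides explicitly and match them term by term, exploiting the coordinate-wise action of $U_n$ against the block structure of $A_n$. Since $x$ is real, equation \eqref{eqn:UjUj_inner_prod} gives $\langle U_n^i x, U_n^j x \rangle = \sum_{\ell=0}^{n-1} x_\ell^2 (\w^{i-j})^\ell$, so that
\[
\Re\left(\langle U_n^i x, U_n^j x\rangle\right) = \sum_{\ell=0}^{n-1} x_\ell^2 \cos\!\left(\tfrac{2\pi\ell(i-j)}{n}\right).
\]
This is the expression I want to recover from the left-hand side.

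To compute the left-hand side, I would use that $A_n$ is permutationally similar to a direct sum of blocks, so that $\langle A_n^i x, A_n^j x\rangle$ splits as a sum of contributions, one from each block: the $1\times 1$ block at coordinate $0$, each $2\times 2$ rotation block acting on the coordinate pair $\{\ell, n-\ell\}$ for $1 \le \ell < n/2$, and, when $n$ is even, the $1\times 1$ block at coordinate $n/2$. For a rotation block I would write the contribution as a quadratic form and use $R_\alpha^\top R_\beta = R_{\beta-\alpha}$, giving
\[
\begin{bmatrix} x_\ell & x_{n-\ell}\end{bmatrix} R_{2\pi\ell(j-i)/n} \begin{bmatrix} x_\ell \\ x_{n-\ell}\end{bmatrix}.
\]
Expanding this and observing that the off-diagonal (sine) terms cancel leaves precisely $(x_\ell^2 + x_{n-\ell}^2)\cos(2\pi\ell(i-j)/n)$, using that cosine is even. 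The block at coordinate $0$ contributes $x_0^2$, and when $n$ is even the block at coordinate $n/2$ contributes $(-1)^{i-j} x_{n/2}^2$.

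It then remains to match the two expressions. Because $i-j \in \mathbb Z$ and cosine is even, $\cos(2\pi(n-\ell)(i-j)/n) = \cos(2\pi\ell(i-j)/n)$, so grouping the $\ell$ and $n-\ell$ terms of the right-hand sum yields exactly $(x_\ell^2 + x_{n-\ell}^2)\cos(2\pi\ell(i-j)/n)$, matching each rotation block. The $\ell=0$ term matches $x_0^2$, and when $n$ is even the $\ell = n/2$ term is $x_{n/2}^2 \cos(\pi(i-j)) = (-1)^{i-j} x_{n/2}^2$, matching the final block.

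The computation presents no serious obstacle; it is chiefly careful bookkeeping of the block decomposition and the $\ell \leftrightarrow n-\ell$ pairing, together with separate handling of the even-$n$ coordinate $n/2$. The one genuine observation is that the cross-terms in each $2\times 2$ rotation block cancel, so that only cosines survive with no sine contribution, which is exactly what forces the left-hand side to equal the real part on the right.
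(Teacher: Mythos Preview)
Your proof is correct, but it takes a different route from the paper's. You compute both sides explicitly in coordinates, decomposing the left-hand side block by block and matching against the cosine expansion of the right-hand side; the key observation, as you note, is that the sine cross-terms in each $2\times 2$ rotation block cancel.

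The paper's argument is shorter and coordinate-free. It first uses unitarity of $A_n$ and $U_n$ to reduce to the case $i=0$, so that one need only show $\langle x, A_n^k x\rangle = \Re(\langle x, U_n^k x\rangle)$. Then it invokes the single matrix identity $A_n^k + (A_n^k)^T = 2\Re(U_n^k)$ together with the symmetry $\langle x, (A_n^k)^T x\rangle = \langle x, A_n^k x\rangle$ of the real bilinear form. This identity is precisely the block-level statement that encapsulates your ``sine terms cancel'' observation: adding $R_\theta$ to its transpose kills the off-diagonal entries and leaves $2\cos\theta$ on the diagonal, which is exactly the real part of the corresponding diagonal block of $U_n^k$. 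Your approach buys transparency and avoids needing to spot that identity; the paper's buys brevity and makes the later Lemma~\ref{lem:spec_decomp_of_Ax_gram_mx} (the spectral decomposition of the Gram matrix of the $A_n^i x$) slightly more natural, since the same symmetrization idea recurs there.
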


\begin{proof}
Since $A_n$ and $U_n$ are both unitary,
$\langle A_n^i x, A_n^j x \rangle = \langle x, A_n^{j-i}x \rangle$
and
$\langle U_n^i x, U_n^j x \rangle = \langle x, U_n^{j-i}x \rangle$
for every $i,j \in \mathbb Z$.  As a result, it suffices to show that
$\langle x,A_n^kx \rangle = \Re\left(\langle x, U_n^k x \rangle\right)$
for every $k \in \mathbb Z$.  Toward this end, note that $A_n^k + (A_n^k)^T = 2\Re(U_n^k)$, and that, since $U_n^k$ is diagonal, $\Re\left(\langle x, U_n^k x \rangle\right) = \langle x, \Re(U_n^k) x \rangle$.  Therefore,
\[ \Re\left(\langle x, U_n^k x \rangle\right) = \langle x, \Re(U_n^k) x \rangle = \langle x, {\textstyle \frac 12}(A_n^k + (A_n^k)^T) x \rangle. \]
Properties of the real inner product give $\langle x, (A_n^k)^T x \rangle = \langle A_n^k x, x \rangle = \langle x, A_n^k x \rangle$, so that
\[ \langle x, {\textstyle \frac 12}(A_n^k + (A_n^k)^T) x \rangle = {\textstyle \frac 12}\langle x, A_n^k x \rangle + {\textstyle \frac 12}\langle x, (A_n^k)^T x \rangle
= \langle x, A_n^k x \rangle, \]
as desired.
\end{proof}

It will also be useful to understand the support of the vectors $x,A_nx,A_n^2x,\ldots,A_n^{n-1}x$.

\begin{obs}\label{obs:def_Wx}
The block structure of the matrix $A_n$ dictates that the $i$th row of $A_n^k$ may have nonzero entries in only columns $i$ and $n-i$.
Therefore, the $i$th coordinate of $A_n^kx$ may be nonzero only if at least one of $i$ or $n-i$ is in $\supp(x)$.
That is, the support of the vector $A_n^kx$ must be a subset of the set
\begin{equation}\label{eqn:block_support_of_x}
W(x) = \{ i : i \in \supp(x) \text{ or } n-i \in \supp(x) \}.
\end{equation}
As a result, the coordinates of $x$ with indices in $W(x)$ are the only ones that may affect the product $A_n^kx$ for any $k \in \mathbb Z$.
\end{obs}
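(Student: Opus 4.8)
The plan is to read off the conclusion directly from the block structure of the powers of $A_n$, which was already recorded in the definition of $A_n$. Recall that $A_n$ is permutationally similar to a direct sum consisting of a $1\times 1$ block (with entry $1$) at index $0$, the $2\times 2$ rotation blocks $R_{2j\pi/n}$ coupling each index pair $\{j,n-j\}$ for $1 \le j < n/2$, and, when $n$ is even, a further $1\times 1$ block (with entry $-1$) at index $n/2$.

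First I would confirm that this block structure is inherited by every power $A_n^k$. Since $A_n$ is permutationally similar to a block-diagonal matrix, so is $A_n^k$, and the blocks of $A_n^k$ are precisely the $k$th powers of the corresponding blocks of $A_n$; for a $2\times 2$ rotation block this gives $R_{2j\pi/n}^k = R_{2jk\pi/n}$, exactly as in the definition of $A_n$. What matters here is not the precise values of these entries but only their location: the $(i,\ell)$ entry of $A_n^k$ can be nonzero only when $i$ and $\ell$ lie in a common block, that is, only when $\ell \in \{i,\,n-i\}$, with indices read modulo $n$ so that $n-0 \equiv 0$ and, for $n$ even, $n-\tfrac n2 \equiv \tfrac n2$.

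With this in hand the claim reduces to a one-line expansion. For each $i \in \{0,1,\ldots,n-1\}$,
\[
(A_n^k x)_i = \sum_{\ell=0}^{n-1} (A_n^k)_{i\ell}\, x_\ell = \sum_{\ell \in \{i,\,n-i\}} (A_n^k)_{i\ell}\, x_\ell,
\]
since every omitted term vanishes. Consequently, if $x_\ell = 0$ for every $\ell \in \{i,\,n-i\}$ --- equivalently, if $i \notin W(x)$ for $W(x)$ as in \eqref{eqn:block_support_of_x} --- then $(A_n^k x)_i = 0$. Taking the contrapositive yields $\supp(A_n^k x) \subseteq W(x)$, and the final assertion of the observation is then immediate, as any coordinate of $x$ indexed outside $W(x)$ contributes to no coordinate of $A_n^k x$.

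I expect no genuine difficulty here; the argument is purely structural. The only points needing care are the boundary indices $i = 0$ and $i = n/2$, where the relevant block is a singleton and the set $\{i,\,n-i\}$ collapses to $\{i\}$, so that the displayed sum has a single term. Keeping the index arithmetic modulo $n$ straight in these degenerate cases is the whole of the bookkeeping.
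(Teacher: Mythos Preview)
Your proposal is correct and is essentially the same as the paper's approach: the paper states this as an observation with the justification embedded in the statement itself, relying directly on the block structure of $A_n$ without a separate proof. You have simply spelled out the details of that structural reasoning, which matches the paper's intent exactly.
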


\begin{defn}\label{def:weight}
Given a vector $x \in \mathbb R^n$, 
the {\it weight} of $x$, denoted $\weight(x)$, is defined by $\weight(x) = |W(x)|$, where $W(x)$ is as given in \eqref{eqn:block_support_of_x}.
\end{defn}

\begin{defn}
A vector $x \in \mathbb R^n$ is {\it balanced} if $x_i = x_{n-i}$ for all $i\in\{1,2,\ldots,n-1\}$. 
\end{defn}

\begin{obs}\label{obs:balanced_vector_has_equal_weight_and_support}
If $x \in \mathbb R^n$ is balanced, then $\weight(x) = \left|\supp(x)\right|$.
\end{obs}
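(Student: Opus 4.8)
The plan is to prove the two inequalities $|\supp(x)| \le \weight(x)$ and $\weight(x) \le |\supp(x)|$ separately, the first holding for every vector and the second being where the balance hypothesis does its work. Since $\weight(x) = |W(x)|$ with $W(x)$ as in \eqref{eqn:block_support_of_x}, it suffices to show that $W(x) = \supp(x)$ whenever $x$ is balanced.

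First I would observe that $\supp(x) \subseteq W(x)$ holds by definition for any $x \in \mathbb R^n$: if $i \in \supp(x)$, then the condition defining $W(x)$ is immediately met by its first disjunct. This already gives $|\supp(x)| \le |W(x)| = \weight(x)$, with no hypothesis on $x$. The substance of the observation is thus the reverse containment $W(x) \subseteq \supp(x)$. Given $i \in W(x)$, by definition either $i \in \supp(x)$, in which case there is nothing to prove, or else $n - i \in \supp(x)$. In the latter case I would split according to whether $i$ is a fixed point of the involution $i \mapsto n - i$ on $\{0,1,\ldots,n-1\}$. For $i \in \{1,2,\ldots,n-1\}$ the balance condition $x_i = x_{n-i}$ forces $x_i \ne 0$, so $i \in \supp(x)$. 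The remaining case is $i = 0$, where $n - i \equiv 0 \bmod n$, so that $n - i \in \supp(x)$ already says $0 = i \in \supp(x)$; this case needs no appeal to balance. Combining the two containments yields $W(x) = \supp(x)$, and hence $\weight(x) = |\supp(x)|$.

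I do not anticipate any genuine obstacle, as the statement is a direct unwinding of the definitions. The only point demanding care is the index bookkeeping modulo $n$: the balance condition is stated only for $i \in \{1,\ldots,n-1\}$, so the fixed point $i = 0$ of $i \mapsto n - i$ must be handled separately, as above. (When $n$ is even, the index $i = n/2$ is likewise self-paired, but such an index falls into the first case anyway, so it poses no difficulty.)
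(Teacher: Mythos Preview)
Your argument is correct: showing $W(x)=\supp(x)$ via the two inclusions, with the balance hypothesis used exactly for the non-trivial containment $W(x)\subseteq\supp(x)$, is precisely the intended reasoning, and your handling of the boundary index $i=0$ is appropriate. The paper states this as an observation without proof, treating it as immediate from the definitions; your write-up simply makes explicit what the paper leaves to the reader.
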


Note that $\weight(x)$ is the number of coordinates that contain a nonzero entry in at least one of the vectors $x,A_nx,A_n^2x,\ldots,A_n^{n-1}x$.
Our next goal is to show that the weight of $x$ in fact gives the dimension of the span of those vectors.  We first need two intermediate results.

\begin{lem}\label{lem:gram_mx_Un_hatxs_gives_transpose_of_gram_mx_Unxs}
Let $x=(x_0,x_1,\ldots,x_{n-1}) \in \mathbb R^n$ and let $\hat x = (x_0,x_{n-1},x_{n-2},\ldots,x_1)$.  Then $\langle U_n^i\hat x, U_n^j\hat x \rangle = \overline{\langle U_n^ix, U_n^jx \rangle} = \langle U_n^jx, U_n^ix \rangle$ for every $i,j\in\{0,1,\ldots,n-1\}$.
\end{lem}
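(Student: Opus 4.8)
The plan is to reduce everything to the explicit inner-product formula \eqref{eqn:UjUj_inner_prod}, which for a real vector expresses each relevant inner product as a sum of monomials in $\w^{i-j}$ with real coefficients. The second asserted equality, $\overline{\langle U_n^i x, U_n^j x \rangle} = \langle U_n^j x, U_n^i x \rangle$, is simply conjugate-symmetry of the Hermitian inner product and requires no computation, so the entire content lies in the first equality.

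First I would record that the coordinate reversal defining $\hat x$ can be written uniformly as $\hat x_k = x_{(n-k)\bmod n}$ for every $k \in \{0,1,\ldots,n-1\}$; in particular this correctly fixes the coordinate at index $0$, since $(n-0)\bmod n = 0$. Then, applying \eqref{eqn:UjUj_inner_prod} to $\hat x$ and using that its coordinates are real, I would write
\[
\langle U_n^i \hat x, U_n^j \hat x \rangle = \sum_{k=0}^{n-1} |\hat x_k|^2 \left(\w^{i-j}\right)^k = \sum_{k=0}^{n-1} x_{(n-k)\bmod n}^2 \left(\w^{i-j}\right)^k.
\]

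Next I would perform the substitution $m = (n-k)\bmod n$, which is a bijection of $\{0,1,\ldots,n-1\}$ onto itself. Since $\w^n = 1$, the exponent transforms as $\left(\w^{i-j}\right)^k = \left(\w^{i-j}\right)^{-m}$, turning the sum into $\sum_{m=0}^{n-1} x_m^2 \left(\w^{j-i}\right)^m$. On the other hand, taking the complex conjugate of \eqref{eqn:UjUj_inner_prod} for $x$ and using that the coefficients $x_m^2$ are real together with $\overline{\w} = \w^{-1}$ gives exactly $\overline{\langle U_n^i x, U_n^j x \rangle} = \sum_{m=0}^{n-1} x_m^2 \left(\w^{j-i}\right)^m$. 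Comparing the two sums yields the first equality, and the second then follows from conjugate-symmetry of the inner product, completing the proof.

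There is essentially no hard step here; the only point demanding care is the modular bookkeeping in the index substitution—specifically, confirming that reversing the coordinates corresponds to negating the exponent of $\w^{i-j}$ modulo $n$, and that the reversal fixes index $0$ rather than shifting it. Everything else is a direct appeal to \eqref{eqn:UjUj_inner_prod} and the unimodularity of $\w$.
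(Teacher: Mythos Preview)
Your proof is correct and follows essentially the same approach as the paper: both arguments invoke \eqref{eqn:UjUj_inner_prod}, note that the second equality is just conjugate-symmetry, and establish the first by reindexing the sum via $k \mapsto n-k$ (the paper separates out the $k=0$ term explicitly, while you handle it uniformly with the modular notation $\hat x_k = x_{(n-k)\bmod n}$). The only cosmetic difference is direction—you start from $\langle U_n^i\hat x, U_n^j\hat x\rangle$ and transform it into $\overline{\langle U_n^i x, U_n^j x\rangle}$, whereas the paper goes the other way.
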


\begin{proof}
The second claimed equality is trivial.  For the first, note, using Observation \ref{obs:mult_Un_by_vector_x}, that
\[ \overline{ \langle U_n^ix, U_n^jx \rangle } = \overline{ \sum_{k=0}^{n-1} |x_k|^2 \left(\w^{i-j}\right)^k } = \sum_{k=0}^{n-1} |x_k|^2 \overline{ \left(\w^{i-j}\right)^k } =  |x_0|^2 + \sum_{k=1}^{n-1} |x_k|^2 \left(\w^{i-j}\right)^{n-k}. \]
Reindexing the rightmost expression then gives
\[ |x_0|^2 + \sum_{k=1}^{n-1} |x_{n-k}|^2 \left(\w^{i-j}\right)^k = \sum_{k=0}^{n-1} |\hat x_k|^2 \left(\w^{i-j}\right)^k = \langle U_n^i\hat x, U_n^j\hat x \rangle. \qedhere \]
\end{proof}

Lemma \ref{lem:spec_decomp_of_Ux_gram_mx} showed the Gram matrix of the vectors $x,U_nx,U_n^2x,\ldots,U_n^{n-1}x$ to have eigenvalues given by $n$ times the squares of the coordinates of $x$.  Hence, one consequence of Lemma \ref{lem:gram_mx_Un_hatxs_gives_transpose_of_gram_mx_Unxs} is that the conjugate
 (not the Hermitian) of this matrix has the same spectrum, 
but with the eigenvalues occurring in a different order in the diagonalized form
that results from conjugation by the Fourier matrix.

Lemma \ref{lem:gram_mx_Axs_real_part_of_gram_mx_Uxs} gave the explicit relationship between the Gram matrix of the vectors \linebreak $x,U_nx,U_n^2x,\ldots,U_n^{n-1}x$ and that of the vectors $x,A_nx,A_n^2x,\ldots,A_n^{n-1}x$, namely that taking the real part of each entry in the first matrix gives the second matrix.  Now, just as Lemma \ref{lem:spec_decomp_of_Ux_gram_mx} gave the spectral decomposition of the first matrix, we are ready to obtain this information for the second matrix as well.

\begin{lem}\label{lem:spec_decomp_of_Ax_gram_mx}
Given $x=(x_0,x_1,\ldots,x_{n-1}) \in \mathbb R^n$, let
\[
\Lambda = n\diag{
x_0^2,
~{\textstyle \frac 12}\left(x_1^2+ x_{n-1}^2\right),
~{\textstyle \frac 12}\left(x_2^2+x_{n-2}^2\right),\ldots,
~{\textstyle \frac 12}\left(x_{n-1}^2+x_1^2\right)}.
\]
Then the Gram matrix of the vectors $x,A_nx,A_n^2x,\ldots,A_n^{n-1}x$ is $F_n^*\Lambda F_n$.
\end{lem}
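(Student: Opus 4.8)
The plan is to build the target Gram matrix out of the already-computed Gram matrix of the vectors $x,U_nx,\ldots,U_n^{n-1}x$, exploiting the fact (Lemma~\ref{lem:gram_mx_Axs_real_part_of_gram_mx_Uxs}) that passing from the $U_n$-orbit to the $A_n$-orbit of $x$ amounts to taking the entrywise real part. Write $M$ for the Gram matrix of $x,U_nx,\ldots,U_n^{n-1}x$, so that Lemma~\ref{lem:spec_decomp_of_Ux_gram_mx} gives $M=F_n^*\Lambda_1F_n$ with $\Lambda_1=n\diag{x_0^2,x_1^2,\ldots,x_{n-1}^2}$. Let $P$ denote the Gram matrix of $x,A_nx,\ldots,A_n^{n-1}x$, i.e.\ the matrix we must identify. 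Since $\langle A_n^ix,A_n^jx\rangle=\Re\langle U_n^ix,U_n^jx\rangle$ for all $i,j$ by Lemma~\ref{lem:gram_mx_Axs_real_part_of_gram_mx_Uxs}, we have $P=\Re(M)=\tfrac12(M+\overline M)$ entrywise.

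The key step is to diagonalize $\overline M$ in the same basis. Here Lemma~\ref{lem:gram_mx_Un_hatxs_gives_transpose_of_gram_mx_Unxs} does the work: with $\hat x=(x_0,x_{n-1},\ldots,x_1)$, its conclusion $\langle U_n^i\hat x,U_n^j\hat x\rangle=\overline{\langle U_n^ix,U_n^jx\rangle}$ says precisely that $\overline M$ is the Gram matrix of the vectors $\hat x,U_n\hat x,\ldots,U_n^{n-1}\hat x$. Applying Lemma~\ref{lem:spec_decomp_of_Ux_gram_mx} to $\hat x$ therefore yields $\overline M=F_n^*\Lambda_2F_n$ with $\Lambda_2=n\diag{x_0^2,x_{n-1}^2,x_{n-2}^2,\ldots,x_1^2}$; that is, $\Lambda_2$ carries the same diagonal entries as $\Lambda_1$ but with the indices $1,\ldots,n-1$ reversed.

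It then remains only to combine the two diagonalizations, which share the outer factors $F_n^*$ and $F_n$:
\[
P=\tfrac12(M+\overline M)=F_n^*\bigl(\tfrac12(\Lambda_1+\Lambda_2)\bigr)F_n,
\]
and I would finish by checking that $\tfrac12(\Lambda_1+\Lambda_2)$ equals the matrix $\Lambda$ of the statement entrywise: the $0$th diagonal entry is $\tfrac12(nx_0^2+nx_0^2)=nx_0^2$, while for $1\le j\le n-1$ the $j$th entry is $\tfrac12(nx_j^2+nx_{n-j}^2)=\tfrac n2(x_j^2+x_{n-j}^2)$, matching $\Lambda$ exactly.

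I expect no genuine obstacle here; the conceptual content is supplied entirely by the three preceding lemmas, and the remaining work is assembly. The one point demanding care is the index reversal of the previous paragraph: one must track that conjugating $M$ replaces the diagonal $(x_0^2,x_1^2,\ldots,x_{n-1}^2)$ not by itself but by its reversal fixing the $0$th slot, so that averaging produces the symmetric entries $\tfrac12(x_j^2+x_{n-j}^2)$ rather than $x_j^2$.
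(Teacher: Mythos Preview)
Your proposal is correct and follows essentially the same route as the paper's proof: both write the target Gram matrix as $\tfrac12(M+\overline M)$ via Lemma~\ref{lem:gram_mx_Axs_real_part_of_gram_mx_Uxs}, diagonalize $M$ by Lemma~\ref{lem:spec_decomp_of_Ux_gram_mx}, identify $\overline M$ as the Gram matrix of the $U_n$-orbit of $\hat x$ via Lemma~\ref{lem:gram_mx_Un_hatxs_gives_transpose_of_gram_mx_Unxs} and diagonalize it the same way, and then average the two diagonal matrices to obtain $\Lambda$.
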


\begin{proof}
Let $X$ be the Gram matrix of the vectors $x,A_nx,A_n^2x,\ldots,A_n^{n-1}x$ and $Y$ be the Gram matrix of the vectors $x,U_nx,U_n^2x,\ldots,U_n^{n-1}x$.  By Lemma \ref{lem:gram_mx_Axs_real_part_of_gram_mx_Uxs}, then, we have $X = \Re(Y) = (Y + \overline{Y})/2$.
Since each of $Y$ and $\overline Y$ is a circulant matrix, $F_nYF_n^*$ and $F_n\overline YF_n^*$ are both diagonal by Theorem \ref{thm:circulants_diagonalized_by_Fourier}.  In particular,
\[ F_nYF_n^* = n\diag{x_0^2,x_1^2,\ldots,x_{n-1}^2} \]
by Lemma \ref{lem:spec_decomp_of_Ux_gram_mx}, while
\[ F_n\overline Y F_n^* = n\diag{x_0^2,x_{n-1}^2,x_{n-2}^2,\ldots,x_1^2} \]
by the combination of Lemmas \ref{lem:gram_mx_Un_hatxs_gives_transpose_of_gram_mx_Unxs} and \ref{lem:spec_decomp_of_Ux_gram_mx}.  Thus,
    \begin{align*}
        X &= {\textstyle \frac 12 \left(Y + \overline Y\right)} \\
        &= {\textstyle \frac 12 F_n^*n\diag{x_0^2,x_1^2,\ldots,x_{n-1}^2}F_n +  \frac 12 F_n^*n\diag{x_0^2,x_{n-1}^2,x_{n-2}^2,\ldots,x_1^2}F_n} \\
        &=   {\textstyle \frac 12n F_n^* \left(\diag{x_0^2,x_1^2,\ldots,x_{n-1}^2} + \diag{x_0^2,x_{n-1}^2,x_{n-2}^2,\ldots,x_1^2}\right)F_n} \\
        &=   {\textstyle \frac 12n F_n^*\diag{2x_0^2, x_1^2+ x_{n-1}^2,x_2^2+x_{n-2}^2,\ldots,x_{n-1}^2+x_1^2}F_n} \\
        &=   {\textstyle \frac 12n F_n^*(\frac 2n\Lambda) F_n} \\
        &= F_n^*\Lambda F_n. \qedhere
    \end{align*}
\end{proof}

We now obtain as a corollary the following result analogous to Lemma \ref{lem:span_of_Ux_vectors_is_size_of_x_support}.

\begin{lem}\label{lem:span_of_Xx_vectors_is_weight_of_x}
If $x \in \mathbb R^n$ has weight $k$, then the vectors $x,A_nx,A_n^2x,\ldots,A_n^{n-1}x$ span a $k$-dimensional subspace.
\end{lem}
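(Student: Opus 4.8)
The plan is to obtain this as an immediate corollary of Lemma \ref{lem:spec_decomp_of_Ax_gram_mx}, exactly as Lemma \ref{lem:span_of_Ux_vectors_is_size_of_x_support} was obtained from Lemma \ref{lem:spec_decomp_of_Ux_gram_mx}. The dimension of the span of $x, A_n x, A_n^2 x, \ldots, A_n^{n-1} x$ equals the rank of their Gram matrix, and Lemma \ref{lem:spec_decomp_of_Ax_gram_mx} identifies that Gram matrix as $F_n^* \Lambda F_n$ for the diagonal matrix $\Lambda$ displayed there. Because $F_n$ is unitary, conjugation by it preserves rank, so the rank in question is simply the number of nonzero diagonal entries of $\Lambda$. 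The entire problem therefore reduces to counting those entries and verifying that the count is $\weight(x)$.

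To perform the count, I would inspect the diagonal entries of $\Lambda$ index by index. For $1 \le j \le n-1$ the $j$th entry is $\tfrac{n}{2}\left(x_j^2 + x_{n-j}^2\right)$, while the $0$th entry is $n x_0^2$; as each is a nonnegative combination of squares, the $j$th entry is nonzero precisely when $x_j \neq 0$ or $x_{n-j} \neq 0$. But this is exactly the defining condition for $j \in W(x)$ recorded in \eqref{eqn:block_support_of_x}, namely that $j \in \supp(x)$ or $n-j \in \supp(x)$. Hence the indices of the nonzero diagonal entries of $\Lambda$ are precisely the elements of $W(x)$, so their number is $|W(x)| = \weight(x) = k$, which is the claim.

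The only point requiring any care is the treatment of the two ``singleton'' indices $0$ and (when $n$ is even) $n/2$, for which $n - j \equiv j \bmod n$ so that the two squared coordinates appearing in the $j$th entry of $\Lambda$ coincide; one checks directly that the matching with $W(x)$ nonetheless holds there as well. Beyond this routine bookkeeping I anticipate no genuine obstacle, since the argument is a verbatim analog of the deduction already carried out for $U_n$ in Lemma \ref{lem:span_of_Ux_vectors_is_size_of_x_support}.
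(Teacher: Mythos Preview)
Your proposal is correct and follows essentially the same route as the paper: both invoke Lemma~\ref{lem:spec_decomp_of_Ax_gram_mx} to obtain the eigenvalues of the Gram matrix and then count how many are nonzero. Your write-up is actually more explicit than the paper's, which simply lists the eigenvalues and asserts that, since the entries of $x$ are real, exactly $k$ of them are nonzero; your identification of the nonzero indices with $W(x)$ spells out precisely why that assertion holds.
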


\begin{proof}
The Gram matrix of the vectors $x,A_nx,A_n^2x,\ldots,A_n^{n-1}x$ has eigenvalues
\[
nx_0^2,
~{\textstyle \frac n2}\left(x_1^2+ x_{n-1}^2\right),
~{\textstyle \frac n2}\left(x_2^2+x_{n-2}^2\right),\ldots,
~{\textstyle \frac n2}\left(x_{n-1}^2+x_1^2\right)
\]
by Lemma \ref{lem:spec_decomp_of_Ax_gram_mx}.
Since the entries of $x$ are real, precisely $k$ of these are nonzero.
\end{proof}

The next result is an analog of Proposition \ref{prop:psd_circulant_iff_Gram_mx_of_Unxs}.

\begin{prop}\label{prop:psd_circulant_iff_Gram_mx_of_Anxs_for_balanced_x}
An $n\times n$ real symmetric positive semidefinite matrix is a circulant matrix if and only if it is the Gram matrix of the vectors $x,A_nx,A_n^2x,\ldots,A_n^{n-1}x$ for some balanced nonnegative $x \in \mathbb R^n$.
\end{prop}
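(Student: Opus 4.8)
The plan is to mirror the proof of Proposition~\ref{prop:psd_circulant_iff_Gram_mx_of_Unxs}, with Lemma~\ref{lem:spec_decomp_of_Ax_gram_mx} substituting for Lemma~\ref{lem:spec_decomp_of_Ux_gram_mx}; the one genuinely new feature is tracking where the balancedness hypothesis becomes essential. I would first handle the reverse implication, which is routine. Given that $M$ is the Gram matrix of $x, A_n x, \ldots, A_n^{n-1} x$ for some balanced nonnegative $x$, the vectors $A_n^i x$ are real, so $M$ is automatically real symmetric and positive semidefinite. To see that $M$ is circulant, I would observe that since $A_n$ is orthogonal with $A_n^n = I$, the entry $M_{ij} = \langle A_n^i x, A_n^j x \rangle = \langle A_n^{i-j} x, x \rangle$ depends only on the residue of $i-j$ modulo $n$; this is verbatim the argument in Proposition~\ref{prop:psd_circulant_iff_Gram_mx_of_Unxs}, and it uses neither positivity nor balancedness of $x$.

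For the forward implication, I would start from a real symmetric positive semidefinite circulant $M$ and diagonalize it as $F_n M F_n^* = \Lambda_M = \diag{\lambda_0, \lambda_1, \ldots, \lambda_{n-1}}$ via Theorem~\ref{thm:circulants_diagonalized_by_Fourier}, noting $\lambda_i \ge 0$ by positive semidefiniteness. The crux is to establish the eigenvalue symmetry $\lambda_i = \lambda_{n-i}$. For this I would use that the $i$th column $u_i$ of $F_n^*$ is an eigenvector of $M$ for $\lambda_i$ and satisfies $\overline{u_i} = u_{n-i}$; conjugating $M u_i = \lambda_i u_i$ and using that $M$ is real yields $M u_{n-i} = \overline{\lambda_i}\, u_{n-i}$, while $M$ being real symmetric forces each $\lambda_i$ to be real, so that $\lambda_{n-i} = \overline{\lambda_i} = \lambda_i$.

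With this symmetry in hand, I would take $x = (\sqrt{\lambda_0/n}, \sqrt{\lambda_1/n}, \ldots, \sqrt{\lambda_{n-1}/n})$, which is nonnegative and, because $\lambda_i = \lambda_{n-i}$, balanced. For a balanced vector one has $\tfrac12(x_i^2 + x_{n-i}^2) = x_i^2$, so the diagonal matrix $\Lambda$ appearing in Lemma~\ref{lem:spec_decomp_of_Ax_gram_mx} collapses to $n\diag{x_0^2, x_1^2, \ldots, x_{n-1}^2} = \Lambda_M$. That lemma then identifies the Gram matrix of $x, A_n x, \ldots, A_n^{n-1} x$ as $F_n^* \Lambda_M F_n = M$, finishing the direction.

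I expect the main obstacle to be the eigenvalue symmetry $\lambda_i = \lambda_{n-i}$ together with the realization that it is precisely this symmetry that makes the natural candidate for $x$ balanced; this is the single point where the real case genuinely departs from the complex case of Proposition~\ref{prop:psd_circulant_iff_Gram_mx_of_Unxs}, in which every nonnegative $x$ was admissible regardless of balance. Once that observation is in place, the collapse of $\Lambda$ for balanced $x$ and the final identification with $M$ both follow immediately from Lemma~\ref{lem:spec_decomp_of_Ax_gram_mx}.
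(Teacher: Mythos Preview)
Your proof is correct, but the route differs from the paper's. The paper bootstraps both directions from the complex case: for the forward implication it invokes Proposition~\ref{prop:psd_circulant_iff_Gram_mx_of_Unxs} to obtain a nonnegative $x$ with $M$ the Gram matrix of the $U_n^i x$, observes via Lemma~\ref{lem:gram_mx_Axs_real_part_of_gram_mx_Uxs} that (since $M$ is real) $M$ is also the Gram matrix of the $A_n^i x$, and then compares the two spectral decompositions of $M$ furnished by Lemmas~\ref{lem:spec_decomp_of_Ux_gram_mx} and~\ref{lem:spec_decomp_of_Ax_gram_mx} to force $x_i = x_{n-i}$; for the converse it again passes through Lemma~\ref{lem:gram_mx_Axs_real_part_of_gram_mx_Uxs} to identify $M$ with the Gram matrix of the $U_n^i x$ and then cites Proposition~\ref{prop:psd_circulant_iff_Gram_mx_of_Unxs}. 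Your argument instead stays entirely in the real setting: you derive the eigenvalue symmetry $\lambda_i=\lambda_{n-i}$ directly from conjugating the eigenvector equation, construct $x$ from the eigenvalues, and check via Lemma~\ref{lem:spec_decomp_of_Ax_gram_mx} alone that it works; for the converse you show circulancy by the elementary $\langle A_n^i x, A_n^j x\rangle = \langle A_n^{i-j}x,x\rangle$ computation. Your approach is more self-contained and makes transparent that balancedness is irrelevant to the converse, whereas the paper's approach economizes by reusing the complex machinery already built. One small remark: Theorem~\ref{thm:circulants_diagonalized_by_Fourier} is stated for $F_n^* A F_n$, so your $F_n M F_n^* = \Lambda_M$ technically requires the additional (easy) observation that $F_n M F_n^*$ is also diagonal for circulant $M$; the paper makes the same implicit move in its proof of Proposition~\ref{prop:psd_circulant_iff_Gram_mx_of_Unxs}.
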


\begin{proof}
First, let $M$ be a real positive semidefinite circulant matrix.  By Proposition \ref{prop:psd_circulant_iff_Gram_mx_of_Unxs}, $M$ is the Gram matrix of the vectors $x,U_nx,U_n^2x,\ldots,U_n^{n-1}x$ for some nonnegative $x=(x_0,x_1,\ldots,x_{n-1}) \in \mathbb R^n$.
By Lemma \ref{lem:gram_mx_Axs_real_part_of_gram_mx_Uxs}, since $M$ is real, $M$ is in fact the Gram matrix of the vectors $x,A_nx,A_n^2x\ldots,A_n^{n-1}x$.  Hence, we have $M = F_n^*\Lambda F_n$ for
\begin{align*}
	\Lambda &= \diag{
nx_0^2,
~nx_1^2,
~nx_2^2,
\,\ldots,
~nx_{n-1}^2} \\
	&= \diag{
nx_0^2,
~{\textstyle \frac n2}\left(x_1^2+ x_{n-1}^2\right),
~{\textstyle \frac n2}\left(x_2^2+x_{n-2}^2\right),
\,\ldots,
~{\textstyle \frac n2}\left(x_{n-1}^2+x_1^2\right)},
\end{align*}
where the first equality is from Lemma \ref{lem:spec_decomp_of_Ux_gram_mx} and the second is from Lemma \ref{lem:spec_decomp_of_Ax_gram_mx}.  This shows that, for each $i\in\{1,2,\ldots,n-1\}$,
\[
	{\textstyle \frac n2}\left( x_i^2 + x_{n-i}^2 \right) = nx_i^2,
\]
so that, since $x$ is nonnegative, $x_i=x_{n-i}$.  That is, $x$ is balanced.

For the converse, let $M$ be the Gram matrix of the vectors $x,A_nx,A_n^2x,\ldots,A_n^{n-1}x$ for some balanced nonnegative $x \in \mathbb R^n$.  Trivially, $M$ is real symmetric positive semidefinite.  Since $M$ is real, it follows from Lemma \ref{lem:gram_mx_Axs_real_part_of_gram_mx_Uxs} that $M$ is actually the Gram matrix of the vectors $x,U_nx,U_n^2x,\ldots,U_n^{n-1}x$.  By Proposition \ref{prop:psd_circulant_iff_Gram_mx_of_Unxs}, then, $M$ is a circulant matrix, as desired.
\end{proof}

We are now ready to prove an analog of Theorem \ref{thm:msr_circ_symmetric_rep_equivalence}, the central result of Section \ref{sec:orthogonal_representations_and_symmetry}.
There are actually two possibilities for an analog of condition \eqref{cond:G_has_cyc_symm_rep_in_canonical_form} from that theorem.

\begin{thm}\label{thm:msr_REAL_circ_symmetric_rep_equivalence}
Let $G = \circulant{n}{S}$.  The following are equivalent.
    \begin{enumerate}
        \item\label{cond: psdrcm} There exists a real positive semidefinite circulant matrix with rank $k$ and graph $G$.
        \item\label{cond: cyc sym OR} There exists a cyclically symmetric orthogonal representation for $G$ over $\mathbb R$ with rank $k$.
        \item\label{cond: weight k vector} There exists a nonnegative $x \in \mathbb R^n$ with weight $k$ such that $i \mapsto A_n^i x$ is an orthogonal representation for $G$.
        \item\label{cond: balanced vector} There exists a balanced nonnegative $x \in \mathbb R^n$ with support of size $k$ such that $i \mapsto A_n^i x$ is an orthogonal representation for $G$.
    \end{enumerate}
  \end{thm}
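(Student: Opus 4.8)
The plan is to establish the cycle of implications
$\eqref{cond: psdrcm} \implies \eqref{cond: balanced vector} \implies \eqref{cond: weight k vector} \implies \eqref{cond: cyc sym OR} \implies \eqref{cond: psdrcm}$,
paralleling the proof of Theorem \ref{thm:msr_circ_symmetric_rep_equivalence} but substituting the real-variable machinery developed above for its complex analogs. The essential substitutions are the real orthogonal matrix $A_n$ in place of the unitary diagonal $U_n$, Lemma \ref{lem:span_of_Xx_vectors_is_weight_of_x} in place of Lemma \ref{lem:span_of_Ux_vectors_is_size_of_x_support}, and Proposition \ref{prop:psd_circulant_iff_Gram_mx_of_Anxs_for_balanced_x} in place of Proposition \ref{prop:psd_circulant_iff_Gram_mx_of_Unxs}.

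For $\eqref{cond: psdrcm} \implies \eqref{cond: balanced vector}$, I would take a real positive semidefinite circulant matrix $M$ of rank $k$ with graph $G$ and apply Proposition \ref{prop:psd_circulant_iff_Gram_mx_of_Anxs_for_balanced_x} to write $M$ as the Gram matrix of $x, A_n x, \ldots, A_n^{n-1} x$ for some balanced nonnegative $x$. Since $M$ has graph $G$, the map $i \mapsto A_n^i x$ is an orthogonal representation for $G$; its rank is $\weight(x)$ by Lemma \ref{lem:span_of_Xx_vectors_is_weight_of_x}, and $\weight(x) = |\supp(x)|$ by Observation \ref{obs:balanced_vector_has_equal_weight_and_support}, so $|\supp(x)| = \rank(M) = k$. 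The implication $\eqref{cond: balanced vector} \implies \eqref{cond: weight k vector}$ is then immediate from Observation \ref{obs:balanced_vector_has_equal_weight_and_support}, which gives $\weight(x) = |\supp(x)| = k$ for balanced $x$. For $\eqref{cond: weight k vector} \implies \eqref{cond: cyc sym OR}$, since $A_n$ is real orthogonal with $A_n^n = I$, the representation $i \mapsto A_n^i x$ already meets Definition \ref{def:cyclically_symmetric_OR_in_C} over $\mathbb R$, and its rank is $\weight(x) = k$ by Lemma \ref{lem:span_of_Xx_vectors_is_weight_of_x}. Finally, for $\eqref{cond: cyc sym OR} \implies \eqref{cond: psdrcm}$, Definition \ref{def:cyclically_symmetric_OR_in_C} supplies a unitary $U$ with $U^n = I$ and a vector $x$ with $r(i) = U^i x$, each $r(i)$ real. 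The Gram matrix $M$ of $r(0), \ldots, r(n-1)$ is real symmetric positive semidefinite of rank $k$ with graph $G$, and $M_{ij} = \langle U^i x, U^j x \rangle = \langle x, U^{j-i} x \rangle$ together with $U^n = I$ shows $M_{ij}$ depends only on $i - j \bmod n$, exactly as in the proof of Proposition \ref{prop:psd_circulant_iff_Gram_mx_of_Unxs}, so $M$ is circulant.

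The step carrying the genuine content is $\eqref{cond: psdrcm} \implies \eqref{cond: balanced vector}$, and its real difficulty is already dispatched by Proposition \ref{prop:psd_circulant_iff_Gram_mx_of_Anxs_for_balanced_x}: the nontrivial fact is that a real positive semidefinite circulant forces the underlying vector to be \emph{balanced}, the very phenomenon that distinguishes conditions \eqref{cond: weight k vector} and \eqref{cond: balanced vector} and that has no counterpart over $\mathbb C$. Once balancedness is available, the gap between weight and support closes via Observation \ref{obs:balanced_vector_has_equal_weight_and_support}, and the remainder of the cycle is a routine transcription of the complex argument with $A_n$ replacing $U_n$. I expect the only points needing care to be the bookkeeping that the rank of each representation matches the parameter named in its condition — weight in \eqref{cond: weight k vector}, support size in \eqref{cond: balanced vector} — which the two span lemmas handle cleanly.
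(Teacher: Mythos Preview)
Your proposal is correct and follows essentially the same cycle of implications as the paper's proof, with the same key ingredients (Proposition~\ref{prop:psd_circulant_iff_Gram_mx_of_Anxs_for_balanced_x}, Lemma~\ref{lem:span_of_Xx_vectors_is_weight_of_x}, and Observation~\ref{obs:balanced_vector_has_equal_weight_and_support}). The only cosmetic difference is that in $\eqref{cond: weight k vector}\Rightarrow\eqref{cond: cyc sym OR}$ the paper additionally compresses the representation to $\mathbb R^k$ by restricting $A_n$ and $x$ to the coordinates in $W(x)$, a step not needed for the theorem as stated but used to derive Corollary~\ref{cor:min_dim_for_REAL_symmetric_OR_is_mscrREAL}.
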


\begin{proof}
We proceed by showing $\eqref{cond: cyc sym OR}\implies \eqref{cond: psdrcm} \implies \eqref{cond: balanced vector} \implies \eqref{cond: weight k vector}\implies\eqref{cond: cyc sym OR}$.  First, $\eqref{cond: cyc sym OR}\implies \eqref{cond: psdrcm}$ follows by an appeal to the rank of the Gram matrix of the vectors of the representation; the argument is identical to that of the analogous implication in the proof of Theorem \ref{thm:msr_circ_symmetric_rep_equivalence}.
  
Now assume that $\eqref{cond: psdrcm}$ holds.  Then there exists a 
real positive semidefinite circulant matrix $M$ with rank $k$ and graph $G$.
By Proposition \ref{prop:psd_circulant_iff_Gram_mx_of_Anxs_for_balanced_x}, $M$ is the Gram matrix of the vectors $x,A_nx,A_n^2x,\ldots,A_n^{n-1}x$ for some balanced nonnegative $x \in \mathbb R^n$.  Thus, $i \mapsto A_n^i x$ is an orthogonal representation for the graph of $M$, which is $G$.  
By Lemma \ref{lem:span_of_Xx_vectors_is_weight_of_x}, we have $\weight(x)=\rank(M)=k$.  Since $x$ is balanced, the support of $x$ has size $k$ by Observation \ref{obs:balanced_vector_has_equal_weight_and_support}.  Hence, \eqref{cond: balanced vector} holds.

That \eqref{cond: balanced vector} implies \eqref{cond: weight k vector} follows immediately from Observation \ref{obs:balanced_vector_has_equal_weight_and_support}.

Finally, assume \eqref{cond: weight k vector} holds.  Then there exists a nonnegative $x \in \mathbb R^n$ with weight $k$ such that $i \mapsto A_n^ix$ is an orthogonal representation for $G$.  Let $\hat x \in \mathbb R^k$ be the vector formed on the coordinates of $x$ with indices in the set $W(x)$ as defined in \eqref{eqn:block_support_of_x} and
let $\hat A$ be the $k\times k$ orthogonal matrix formed on the rows and columns of $A$ with indices in $W(x)$. More precisely, let $\hat A = A_n[W(x)]$.

Note that the support of each block of $A$ that intersects $W(x)$ is entirely contained within $W(x)$, so that by 
Observation \ref{obs:def_Wx} we have $\langle A_n^i x, A_n^j x \rangle = \langle \hat A_n^i \hat x, \hat A_n^j \hat x \rangle$ for every $i,j \in \{0,1,\ldots,n-1\}$.  Therefore, since $i \mapsto A_n^ix$ is an orthogonal representation for $G$, so is $i \mapsto \hat A_n^i\hat x$.  Moreover, since $x$ has weight $k$, it follows from Lemma \ref{lem:span_of_Xx_vectors_is_weight_of_x} that this representation has rank $k$.  Hence, \eqref{cond: cyc sym OR} holds.
\end{proof}

Just as Corollary \ref{cor:min_dim_for_symmetric_OR_is_mscr} followed from Theorem \ref{thm:msr_circ_symmetric_rep_equivalence} and its proof, the next corollary follows in exactly the same way from Theorem \ref{thm:msr_REAL_circ_symmetric_rep_equivalence}.

\begin{cor}\label{cor:min_dim_for_REAL_symmetric_OR_is_mscrREAL}
    Let $G$ be a circulant graph and let $k$ be the smallest integer such that $G$ has a cyclically symmetric orthogonal representation in $\mathbb R^k$.  Then $k=\mscrREAL(G)$.
\end{cor}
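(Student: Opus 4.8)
The plan is to mirror exactly the derivation of Corollary \ref{cor:msr_is_min_k_for_OR} from Theorem \ref{thm:gram_matrix_equivalence}, replacing each ingredient with its cyclically symmetric, real analog. The two facts I would combine are: first, the equivalence \eqref{cond: psdrcm} $\Leftrightarrow$ \eqref{cond: cyc sym OR} of Theorem \ref{thm:msr_REAL_circ_symmetric_rep_equivalence}, which identifies $\mscrREAL(G)$ with the smallest possible \emph{rank} of a cyclically symmetric orthogonal representation for $G$ over $\mathbb R$; and second, the observation (to be extracted from the proof of that theorem) that any such representation of rank $k$ can be realized by vectors that actually lie in $\mathbb R^k$, so that the smallest possible \emph{ambient dimension} coincides with the smallest possible rank.

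For the second fact, I would point to the implication \eqref{cond: weight k vector} $\Rightarrow$ \eqref{cond: cyc sym OR} in the proof of Theorem \ref{thm:msr_REAL_circ_symmetric_rep_equivalence}. There, starting from a nonnegative $x \in \mathbb R^n$ of weight $k$ whose $A_n$-orbit gives an orthogonal representation, one forms the reduced vector $\hat x \in \mathbb R^k$ on the coordinates indexed by $W(x)$ together with the $k \times k$ orthogonal matrix $\hat A = A_n[W(x)]$, and checks that $i \mapsto \hat A^i \hat x$ is again a cyclically symmetric orthogonal representation for $G$, now living in $\mathbb R^k$ and still of rank $k$. Thus the passage through conditions \eqref{cond: psdrcm}, \eqref{cond: balanced vector}, and \eqref{cond: weight k vector} shows that whenever $G$ admits a cyclically symmetric orthogonal representation over $\mathbb R$ of rank $k$, it admits one in $\mathbb R^k$.

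With these in hand the corollary is immediate. Let $r$ be the smallest rank of a cyclically symmetric orthogonal representation for $G$ over $\mathbb R$; by \eqref{cond: psdrcm} $\Leftrightarrow$ \eqref{cond: cyc sym OR} we have $r = \mscrREAL(G)$. Since the rank of any orthogonal representation is at most its ambient dimension, the smallest $k$ for which $G$ has a cyclically symmetric orthogonal representation in $\mathbb R^k$ satisfies $k \ge r$. Conversely, the observation above produces such a representation in $\mathbb R^r$, giving $k \le r$. Hence $k = r = \mscrREAL(G)$, as claimed.

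I expect the only delicate point to be confirming that the reduced matrix $\hat A = A_n[W(x)]$ inherits orthogonality and continues to satisfy $\hat A^n = I$, so that it genuinely defines a cyclically symmetric representation in the sense required. This rests on the block structure of $A_n$ recorded in Observation \ref{obs:def_Wx}: each $2 \times 2$ rotation block of $A_n$ that meets $W(x)$ is entirely contained in $W(x)$, so $\hat A$ is a principal submatrix that is exactly the direct sum of the relevant blocks of $A_n$ (together with the $1 \times 1$ blocks), with no coupling to the discarded coordinates. Because each such block is orthogonal with $n$th power equal to the identity, the same holds for $\hat A$, and the distinction between rank and ambient dimension is thereby bridged.
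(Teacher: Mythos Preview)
Your proposal is correct and matches the paper's approach exactly: the paper derives the corollary from Theorem \ref{thm:msr_REAL_circ_symmetric_rep_equivalence} and its proof in precisely the way you describe, paralleling how Corollary \ref{cor:min_dim_for_symmetric_OR_is_mscr} was obtained from Theorem \ref{thm:msr_circ_symmetric_rep_equivalence}, with the implication \eqref{cond: weight k vector} $\Rightarrow$ \eqref{cond: cyc sym OR} supplying the reduction to $\mathbb R^k$. Your remark on the block structure ensuring that $\hat A$ is orthogonal with $\hat A^n = I$ fills in a detail the paper leaves implicit.
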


Section \ref{sec:polynomial_connection} showed how to connect the equivalent conditions of Theorem \ref{thm:msr_circ_symmetric_rep_equivalence} with related conditions in terms of polynomials.
Our next goal is to develop analogous connections in the real setting.
We first need one additional ancillary result.

\begin{lem}\label{lem:Ux_and_Ax_inner_product_equal_on_balanced_x}
Suppose $x \in \mathbb R^n$ is balanced.  Then $\langle A_n^i x, A_n^j x \rangle = \langle U_n^i x, U_n^j x\rangle$ for every $i,j \in \{0,1,\ldots,n-1\}$.
\end{lem}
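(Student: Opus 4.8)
The plan is to reduce this identity to the observation that, for a balanced vector $x$, the complex inner products $\langle U_n^i x, U_n^j x \rangle$ are already real; once that is established, the claim follows at once from Lemma \ref{lem:gram_mx_Axs_real_part_of_gram_mx_Uxs}, which states that $\langle A_n^i x, A_n^j x \rangle = \Re\left( \langle U_n^i x, U_n^j x \rangle \right)$.

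First I would note that when $x$ is balanced, the reflected vector $\hat x = (x_0, x_{n-1}, x_{n-2}, \ldots, x_1)$ appearing in Lemma \ref{lem:gram_mx_Un_hatxs_gives_transpose_of_gram_mx_Unxs} in fact coincides with $x$. Indeed, for each $i \in \{1, 2, \ldots, n-1\}$ the $i$th coordinate of $\hat x$ is $x_{n-i}$, which equals $x_i$ precisely because $x$ is balanced, while the zeroth coordinates agree trivially. With $\hat x = x$ in hand, Lemma \ref{lem:gram_mx_Un_hatxs_gives_transpose_of_gram_mx_Unxs} yields $\langle U_n^i x, U_n^j x \rangle = \langle U_n^i \hat x, U_n^j \hat x \rangle = \overline{\langle U_n^i x, U_n^j x \rangle}$ for all $i, j$; a complex number equal to its own conjugate is real, so each $\langle U_n^i x, U_n^j x \rangle$ is real. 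Applying Lemma \ref{lem:gram_mx_Axs_real_part_of_gram_mx_Uxs}, the real part of this quantity is the quantity itself, and the desired equality follows.

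I expect no genuine obstacle here; the sole point needing care is the verification that balancedness forces $\hat x = x$. As an alternative that avoids Lemma \ref{lem:gram_mx_Un_hatxs_gives_transpose_of_gram_mx_Unxs} altogether, one could instead expand $\langle U_n^i x, U_n^j x \rangle = \sum_{k=0}^{n-1} |x_k|^2 \left(\w^{i-j}\right)^k$ using \eqref{eqn:UjUj_inner_prod} and pair the $k$th and $(n-k)$th terms: their imaginary parts cancel because $|x_k|^2 = |x_{n-k}|^2$ while $\Im\left(\left(\w^{i-j}\right)^{n-k}\right) = -\Im\left(\left(\w^{i-j}\right)^k\right)$, with the $k = 0$ term (and the $k = n/2$ term when $n$ is even) contributing no imaginary part. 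Either route confirms that the inner product is real, completing the argument.
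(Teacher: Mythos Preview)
Your proof is correct and follows essentially the same approach as the paper: observe that balancedness gives $\hat x = x$, invoke Lemma \ref{lem:gram_mx_Un_hatxs_gives_transpose_of_gram_mx_Unxs} to conclude the complex inner product equals its own conjugate and is therefore real, and then apply Lemma \ref{lem:gram_mx_Axs_real_part_of_gram_mx_Uxs}. Your chain of equalities is in fact slightly more direct than the paper's (you substitute $\hat x = x$ first and then apply the lemma, whereas the paper applies the lemma first and then substitutes), and the alternative direct computation you sketch is a nice supplementary verification.
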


\begin{proof}
Choose any $i,j \in \{0,1,\ldots,n-1\}$.
Let $\hat x = (x_0,x_{n-1},x_{n-2},\ldots,x_1)$, but note that actually $\hat x = x$, since $x$ is balanced.
As a result, we have by Lemma \ref{lem:gram_mx_Un_hatxs_gives_transpose_of_gram_mx_Unxs} that
\[
    \langle U_n^i x, U_n^j x \rangle =
    \langle U_n^j \hat x, U_n^i \hat x \rangle =
    \langle U_n^j x, U_n^i x \rangle =
    \overline{\langle U_n^i x, U_n^j x \rangle}.
\]
Thus, $\langle U_n^i x, U_n^j x \rangle$ is real.  Hence, Lemma \ref{lem:gram_mx_Axs_real_part_of_gram_mx_Uxs} gives $\langle A_n^i x, A_n^j x \rangle = \langle U_n^i x, U_n^j x\rangle$.
\end{proof}

In particular,
the Gram matrix of the vectors $x,U_nx,U_n^2x,\ldots,U_n^{n-1}x$ is equal to that of the vectors $x,A_nx,A_n^2x,\ldots,A_n^{n-1}x$ whenever $x \in \mathbb R^n$ is balanced.

In a parallel to Section \ref{sec:polynomial_connection}, we now develop a correspondence between real orthogonal representations and
certain polynomials whose values are constrained at specific complex roots of unity.
Just as each of the two
conditions \eqref{cond: weight k vector} and \eqref{cond: balanced vector} of Theorem \ref{thm:msr_REAL_circ_symmetric_rep_equivalence} can be considered analogous to condition \eqref{cond:G_has_cyc_symm_rep_in_canonical_form} of Theorem \ref{thm:msr_circ_symmetric_rep_equivalence}, we present two results here, each giving a correspondence with polynomials analogous to Proposition \ref{prop:polynomial_connection}.  The first result establishes a polynomial correspondence with condition \eqref{cond: weight k vector} of Theorem \ref{thm:msr_REAL_circ_symmetric_rep_equivalence}.

\begin{prop}\label{prop:real_poly_weight}
Suppose $p \in \mathbb R_{\geq0}[z]$ and $x \in \mathbb R^n$ is its normalized coefficient vector.
Then $i\mapsto A_n^ix$ is a cyclically symmetric orthogonal representation for $\circulant{n}{S}$ if and only if $p$ satisfies
\begin{equation}\label{eqn:poly_condition_R2}
    \Re(p(\w^j))=0 \Longleftrightarrow j\not\in S \text{ for all } j \in \{1,2,\ldots,n-1\}.
\end{equation}
\end{prop}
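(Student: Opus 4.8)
The plan is to imitate the proof of Proposition \ref{prop:polynomial_connection} line for line, substituting the real orthogonal matrix $A_n$ for the unitary matrix $U_n$ and bridging the two via Lemma \ref{lem:gram_mx_Axs_real_part_of_gram_mx_Uxs}. One preliminary remark streamlines everything: since $A_n$ is real orthogonal with $A_n^n = I$ and $x \in \mathbb R^n$, a map of the form $i \mapsto A_n^i x$ is, the moment it is an orthogonal representation at all, \emph{automatically} cyclically symmetric in the sense of Definition \ref{def:cyclically_symmetric_OR_in_C} (taking $U = A_n$, which is unitary). So the only genuine content is to characterize when $i \mapsto A_n^i x$ is an orthogonal representation for $\circulant{n}{S}$, and to show this is exactly condition \eqref{eqn:poly_condition_R2}.

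First I would use the cyclic structure to collapse the representation condition, which a priori ranges over all pairs of vertices, into a condition indexed by a single difference. Because $A_n$ is orthogonal, $\langle A_n^i x, A_n^j x \rangle = \langle x, A_n^{j-i} x \rangle$, and since $A_n^n = I$ this inner product depends only on the residue of $j-i$ modulo $n$. Together with the fact that $S$ is closed under additive inverse, this shows that $i \mapsto A_n^i x$ is an orthogonal representation for $\circulant{n}{S}$ precisely when, for every $j \in \{1,2,\ldots,n-1\}$,
\[
\langle x, A_n^j x \rangle \neq 0 \Longleftrightarrow j \in S.
\]

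Next I would identify $\langle x, A_n^j x \rangle$ with $\Re(p(\w^j))$. Applying Lemma \ref{lem:gram_mx_Axs_real_part_of_gram_mx_Uxs} with the first index set to $0$ gives $\langle x, A_n^j x \rangle = \Re\!\left( \langle x, U_n^j x \rangle \right)$. Since the real part is insensitive to conjugation and $\langle x, U_n^j x \rangle = \overline{\langle U_n^j x, x \rangle}$, Lemma \ref{lem:innerProduct_with_U_and_p} then yields $\Re\!\left( \langle x, U_n^j x \rangle \right) = \Re\!\left( \langle U_n^j x, x \rangle \right) = \Re(p(\w^j))$. Substituting this into the displayed equivalence above produces exactly the contrapositive of \eqref{eqn:poly_condition_R2}, which completes the argument in both directions at once.

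Because every step is a transcription of the complex case through the two available lemmas, I do not expect any real obstacle. The one point demanding care is the bookkeeping of argument order and complex conjugation in the inner products; this is ultimately harmless since it is washed out by $\Re(\cdot)$, but it must be tracked cleanly so that the final quantity lands on $\Re(p(\w^j))$ rather than on an erroneous sign or on $p(\w^j)$ itself.
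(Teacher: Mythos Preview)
Your proposal is correct and follows essentially the same approach as the paper: reduce the orthogonal-representation condition to the vanishing of $\langle x, A_n^j x\rangle$, then identify this with $\Re(p(\w^j))$ via Lemma \ref{lem:gram_mx_Axs_real_part_of_gram_mx_Uxs} and Lemma \ref{lem:innerProduct_with_U_and_p}. The paper's proof is the same chain of equalities compressed into one line, using $\langle x, A_n^j x\rangle = \langle A_n^j x, x\rangle$ (real inner product) in place of your conjugation remark.
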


\begin{proof}
The proof is similar to that of Proposition \ref{prop:polynomial_connection}. Note that
$i \mapsto A_n^i x$ is an orthogonal representation for $\circulant{n}{S}$ if and only if
\[
j \not\in S \Longleftrightarrow 0 = \langle x, A_n^jx \rangle = \langle A_n^jx, x \rangle = \Re(\langle U_n^jx, x \rangle) = \Re(p(\w^j))
\]
for every $j \in \{1,2,\ldots,n-1\}$, where the third and fourth equalities follow from Lemmas \ref{lem:gram_mx_Axs_real_part_of_gram_mx_Uxs} and \ref{lem:innerProduct_with_U_and_p}, respectively.
\end{proof}

The condition given in \eqref{eqn:poly_condition_R2} that the polynomial take on only purely imaginary values on a given set of points may be harder to work with than the condition that the polynomial vanishes on that set.
However, by imposing the additional requirement
that the normalized coefficient vector of the polynomial is balanced, we can obtain a second correspondence, this time in terms of zeros of the polynomial.  In particular, the following result establishes a polynomial correspondence with condition \eqref{cond: balanced vector} of Theorem \ref{thm:msr_REAL_circ_symmetric_rep_equivalence}.

\begin{prop}\label{prop:real_poly}
Suppose $p \in \mathbb R_{\geq0}[z]$ and $x \in \mathbb R^n$ is its normalized coefficient vector.  Suppose also that $x$ is balanced.
Then $i\mapsto A_n^ix$ is a cyclically symmetric orthogonal representation for $\circulant{n}{S}$ if and only if $p$ satisfies
\begin{equation}\label{eqn:poly_condition_R}
    p(\w^j)=0 \Longleftrightarrow j\not\in S \text{ for all } j \in \{1,2,\ldots,n-1\}.
\end{equation}
\end{prop}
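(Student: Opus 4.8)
The plan is to mirror the proof of Proposition \ref{prop:polynomial_connection} verbatim, with the unitary matrix $U_n$ replaced by the orthogonal matrix $A_n$, and to observe that the hypothesis that $x$ is balanced is precisely what collapses the purely-imaginary-value condition of the companion result, Proposition \ref{prop:real_poly_weight}, back into a genuine vanishing condition on $p$.

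First I would record the basic orthogonal-representation criterion. Since $A_n$ is orthogonal, for any $i,j \in \{0,1,\ldots,n-1\}$ we have $\langle A_n^i x, A_n^j x \rangle = \langle A_n^{i-j} x, x \rangle$, so the inner product of $r(i)=A_n^i x$ with $r(j)=A_n^j x$ depends only on the residue of $i-j$ modulo $n$. Because $S$ is closed under additive inverse, it follows that $i \mapsto A_n^i x$ is an orthogonal representation for $\circulant{n}{S}$ if and only if, for every $j \in \{1,2,\ldots,n-1\}$,
\[
    j \notin S \Longleftrightarrow \langle A_n^j x, x \rangle = 0.
\]

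Next I would bring in the balancedness hypothesis. Lemma \ref{lem:Ux_and_Ax_inner_product_equal_on_balanced_x} gives that, when $x$ is balanced, $\langle A_n^j x, x \rangle = \langle U_n^j x, x \rangle$ for every $j$. Then Lemma \ref{lem:innerProduct_with_U_and_p}, applied with its index equal to $j$, identifies $\langle U_n^j x, x \rangle = p(\w^j)$, since $x$ is the normalized coefficient vector of $p$. Chaining these two equalities yields $\langle A_n^j x, x \rangle = p(\w^j)$, and in particular $\langle A_n^j x, x \rangle = 0 \Longleftrightarrow p(\w^j)=0$. Substituting this equivalence into the criterion from the first step turns the orthogonal-representation condition into exactly $j \notin S \Longleftrightarrow p(\w^j)=0$ for all $j \in \{1,2,\ldots,n-1\}$, which is the condition given in \eqref{eqn:poly_condition_R}.

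There is essentially no obstacle here beyond assembling the right lemmas in the correct order; the genuine content of the statement is the recognition that balancedness is the exact hypothesis needed to pass from the awkward condition $\Re(p(\w^j))=0$ of Proposition \ref{prop:real_poly_weight} to the cleaner condition $p(\w^j)=0$. The only point requiring a moment's care is why balancedness forces $\langle U_n^j x, x \rangle$ to be real, so that taking real parts (as in Lemma \ref{lem:gram_mx_Axs_real_part_of_gram_mx_Uxs}) becomes vacuous; but this is already packaged inside Lemma \ref{lem:Ux_and_Ax_inner_product_equal_on_balanced_x}, so no new computation is required.
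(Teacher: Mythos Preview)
Your proof is correct and follows essentially the same route as the paper: the paper's argument reduces to the chain $\langle A_n^j x, x\rangle = \langle U_n^j x, x\rangle = p(\w^j)$, citing Lemma~\ref{lem:Ux_and_Ax_inner_product_equal_on_balanced_x} for the first equality (using balancedness) and Lemma~\ref{lem:innerProduct_with_U_and_p} for the second, which is exactly what you do.
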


\begin{proof}
The proof is identical to that of Proposition \ref{prop:real_poly_weight}, except that now, since $x$ is balanced, we have
$\langle A_n^jx, x \rangle = \langle U_n^jx, x \rangle = p(\w^j)$,
where the first and second equalities follow from Lemmas \ref{lem:Ux_and_Ax_inner_product_equal_on_balanced_x} and \ref{lem:innerProduct_with_U_and_p}, respectively.
\end{proof}

By applying Theorem \ref{thm:msr_REAL_circ_symmetric_rep_equivalence} together with Propositions \ref{prop:real_poly_weight} and \ref{prop:real_poly}, we obtain an analog of Theorem \ref{thm:TFAE_poly_C} that summarizes  the real case as follows.
  \begin{thm}\label{thm:TFAE_poly_R}
Let $G = \circulant{n}{S}$.  The following are equivalent.
    \begin{enumerate}
      \item There exists a real positive semidefinite circulant matrix with rank $k$ and graph $G$.
        \item There exists a cyclically symmetric orthogonal representation for $G$ over $\mathbb R$ with rank $k$.
      \item There exists a polynomial $p\in\mathbb R_{\geq0}[z]$ whose normalized coefficient vector has weight $k$ such that
    \[
    \Re(p(\w^j))=0 \Longleftrightarrow j\not\in S \text{ for all } j \in \{1,2,\ldots,n-1\}.
    \]
      \item \label{cond:balanced_poly_representation}
      There exists a polynomial $p\in\mathbb R_{\geq0}[z]$ whose normalized coefficient vector is balanced and has support of size $k$ such that
    \[
    p(\w^j)=0 \Longleftrightarrow j\not\in S \text{ for all } j \in \{1,2,\ldots,n-1\}.
    \]

    \end{enumerate}
  \end{thm}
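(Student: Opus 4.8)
The plan is to assemble the statement directly from Theorem~\ref{thm:msr_REAL_circ_symmetric_rep_equivalence} together with the two polynomial dictionaries of Propositions~\ref{prop:real_poly_weight} and~\ref{prop:real_poly}, using the bijection between nonnegative vectors in $\mathbb R^n$ and polynomials in $\mathbb R_{\geq 0}[z]$ of degree at most $n-1$ noted after Definition~\ref{defn:poly_corresponding_to_vector}. The starting observation is that conditions (1) and (2) of the present theorem are verbatim conditions~\eqref{cond: psdrcm} and~\eqref{cond: cyc sym OR} of Theorem~\ref{thm:msr_REAL_circ_symmetric_rep_equivalence}, and hence are already known to be equivalent to each other and to the weight-vector condition~\eqref{cond: weight k vector} and the balanced-vector condition~\eqref{cond: balanced vector}. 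It therefore suffices to show that (3) is equivalent to~\eqref{cond: weight k vector} and that (4) is equivalent to~\eqref{cond: balanced vector}; chaining these with the internal equivalences of Theorem~\ref{thm:msr_REAL_circ_symmetric_rep_equivalence} then yields the full result.

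For (3) $\Longleftrightarrow$~\eqref{cond: weight k vector}, I would pass between a polynomial and its normalized coefficient vector. Given~\eqref{cond: weight k vector}, take the nonnegative weight-$k$ vector $x$ and let $p$ be the polynomial corresponding to it via Definition~\ref{defn:poly_corresponding_to_vector}; by bijectivity $x$ is exactly the normalized coefficient vector of $p$, so Proposition~\ref{prop:real_poly_weight} supplies $\Re(p(\w^j))=0 \Leftrightarrow j \notin S$, and $x$ has weight $k$, giving (3). Conversely, given $p$ as in (3), its normalized coefficient vector $x$ is automatically nonnegative and has weight $k$, and Proposition~\ref{prop:real_poly_weight} read in the other direction makes $i \mapsto A_n^i x$ an orthogonal representation for $G$, giving~\eqref{cond: weight k vector}. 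The equivalence (4) $\Longleftrightarrow$~\eqref{cond: balanced vector} runs identically, now invoking Proposition~\ref{prop:real_poly}: the only change is that the normalized coefficient vector is required to be balanced, which is precisely the hypothesis of that proposition, and the counting is done with support size rather than weight, matching~\eqref{cond: balanced vector} directly.

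I do not anticipate a genuine obstacle, since this is the exact real-field analog of the way Theorem~\ref{thm:TFAE_poly_C} was obtained over $\mathbb C$. The only point needing care is the bookkeeping inside the vector-polynomial dictionary: one must verify that the polynomial corresponding to a given nonnegative vector genuinely has that vector as its normalized coefficient vector (this is where bijectivity and the degree-at-most-$(n-1)$ normalization are used), and that the side hypotheses of the two propositions are preserved in both directions---nonnegativity throughout for Proposition~\ref{prop:real_poly_weight}, and additionally balancedness for Proposition~\ref{prop:real_poly}. Keeping the ``weight $k$'' accounting attached to condition (3) and the ``support of size $k$'' accounting attached to condition (4), exactly as in~\eqref{cond: weight k vector} and~\eqref{cond: balanced vector}, ensures the transcription is faithful.
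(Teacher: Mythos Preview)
Your proposal is correct and follows exactly the approach the paper takes: the paper simply states that the theorem is obtained ``[b]y applying Theorem~\ref{thm:msr_REAL_circ_symmetric_rep_equivalence} together with Propositions~\ref{prop:real_poly_weight} and~\ref{prop:real_poly},'' which is precisely your strategy of matching (3) and (4) to conditions~\eqref{cond: weight k vector} and~\eqref{cond: balanced vector} via the polynomial dictionaries. Your write-up actually spells out the bijection bookkeeping more carefully than the paper does.
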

  
Now the polynomial used in the proof of Theorem \ref{thm:consCircC} can be modified slightly in order to obtain an analog of that result.

\begin{thm}\label{thm:consCircR}
If $G=\circulant{n}{S}$ is a consecutive circulant graph and $n$ is odd, then
\[
\mscrREAL(G)=\mscr(G)=n-|S|.
\]
\end{thm}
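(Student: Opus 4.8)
The plan is to mirror the proof of Theorem \ref{thm:consCircC}, but to invoke condition \eqref{cond:balanced_poly_representation} of Theorem \ref{thm:TFAE_poly_R} in place of Theorem \ref{thm:TFAE_poly_C}. The trivial inequality $\mscr(G) \le \mscrREAL(G)$ together with Theorem \ref{thm:consCircC} already yields $n-|S| = \mscr(G) \le \mscrREAL(G)$, so the entire task reduces to the upper bound $\mscrREAL(G) \le n-|S|$. By Theorem \ref{thm:TFAE_poly_R}, this amounts to exhibiting a polynomial $p \in \mathbb{R}_{\ge 0}[z]$ whose normalized coefficient vector is \emph{balanced} and has support of size $n-|S|$, and which satisfies $p(\w^j)=0 \iff j\notin S$.

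First I would dispose of the complete graph via Observation \ref{obs:complete_graph_min_rank}, and otherwise assume $S=\{\pm 1,\ldots,\pm k\}$ with $1 \le k < \lfloor n/2\rfloor$. The natural candidate is the polynomial $p(z)=\prod_{j=k+1}^{n-k-1}(z-\w^j)$ of Lemma \ref{lem:pNonNegC}, which has positive real coefficients, degree $n-2k-1$, and vanishes at exactly the correct roots of unity. The crucial new observation is that, because $n$ is odd, the degree $n-2k-1$ is even, and this forces $p$ to be \emph{palindromic}. Indeed, the exponent set $\{k+1,\ldots,n-k-1\}$ is closed under $j\mapsto n-j$, so the reciprocal polynomial $z^{\deg p}p(1/z)$ has the same zero set as $p$; computing its leading constant $\prod_j(-\w^j)=(-1)^{\deg p}\w^{\sum_j j}$ and noting that $\sum_j j = \tfrac{n}{2}(n-2k-1)$ is an integer multiple of $n$ (again because $n-2k-1$ is even) shows this constant equals $1$. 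Hence $z^{\deg p}p(1/z)=p(z)$, so the coefficient vector of $p$ is symmetric about the integer index $(n-2k-1)/2$.

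To convert this into the balanced condition $x_i=x_{n-i}$, which is symmetry about index $0$ modulo $n$, I would replace $p$ by $z^s p(z)\bmod(z^n-1)$ for $s\equiv -(n-2k-1)/2$; this is a legitimate integer shift precisely because $n$ is odd. Cyclically permuting the coefficients preserves their positivity and, since $\w^{js}\neq 0$, leaves the zero set among the $n$th roots of unity untouched, while relocating the axis of symmetry to index $0$. The result is a polynomial with a balanced, strictly positive coefficient vector of support size $n-2k=n-|S|$ satisfying the required vanishing condition, so condition \eqref{cond:balanced_poly_representation} of Theorem \ref{thm:TFAE_poly_R} gives $\mscrREAL(G)\le n-|S|$, closing the chain of equalities.

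The main obstacle is exactly this balancing step: over $\mathbb{R}$ one cannot simply reuse the polynomial of Theorem \ref{thm:consCircC}, since its coefficient vector need not be balanced, and the existence of the recentering shift hinges entirely on the parity of $n$. Example \ref{ex:4cycle_over_R} confirms that this is not an artifact of the method — for even $n$ the degree $n-2k-1$ is odd, the axis of symmetry falls at a half-integer, no cyclic shift can produce a balanced vector, and indeed $\mscrREAL$ genuinely exceeds $\mscr$.
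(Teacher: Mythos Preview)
Your proposal is correct and follows essentially the same approach as the paper: both obtain the lower bound from Theorem~\ref{thm:consCircC}, take the polynomial $p$ of Lemma~\ref{lem:pNonNegC}, use palindromicity of $p$ (which the paper justifies only by the fact that all zeros lie on the unit circle, whereas you verify it more carefully via the reciprocal polynomial and the computation $\prod_j(-\w^j)=1$), and then cyclically shift to obtain a balanced normalized coefficient vector before invoking Theorem~\ref{thm:TFAE_poly_R}. Your shift $s\equiv -(n-2k-1)/2\pmod n$ is exactly the paper's shift $k+\tfrac{n+1}{2}$, just written differently.
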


\begin{proof}
As in the proof of Theorem \ref{thm:consCircC}, 
we may assume that $G$ is not a complete graph, so that $S=\{\pm 1, \pm 2, \ldots, \pm k\}$ for some integer $k$ with $1 \le k < \lfloor n/2 \rfloor$.  Also using Theorem \ref{thm:consCircC}, we have $\mscrREAL(G)\geq \mscr(G)=n-|S|=n-2k$.  Hence, we need only show the existence of a real positive semidefinite matrix with graph $G$ and rank $n-2k$.

Let the polynomial
\[p(z)=\prod_{j=k+1}^{n-k-1}(z-\w^j)=\sum_{i=0}^{n-2k-1}a_iz^i\]
and let $q(z)=z^{k+\frac{n+1}2}p(z)$.
By Lemma \ref{lem:pNonNegC}, both $p$ and $q$ have $n-2k$ terms and positive real coefficients.  Moreover, $q$ vanishes at precisely the same complex $n$th roots of unity as does $p$.

Because every zero of $p$ has modulus $1$, its coefficients are symmetric in the sense that $a_i=a_{\deg(p)-i}$ for each $i\in\{0,1,\ldots,\deg(p)\}$.
    As a result, the coefficient of $z^{n+j}$ in $q$ must equal the coefficient of $z^{n-j}$ for each $j\in\{k+1,k+2,\ldots,n-k-1\}$.  It follows that the normalized coefficient vector of $q$ is balanced with weight $n-2k$.  Hence, Theorem \ref{thm:TFAE_poly_R} gives the existence of a matrix as desired.
  \end{proof}

Some of our earlier examples may now be reexamined in terms of polynomials.  For instance,
the orthogonal representation for $C_5$ over $\mathbb R$ given in Example \ref{ex:5cycle_cyclically_symm_rep} was generated by following the proof of Theorem \ref{thm:consCircR}.  We now show the details.

\begin{ex}\label{ex:5cycle_poly_example}
Once again considering the $5$-cycle, this graph satisfies the hypotheses of Theorem \ref{thm:consCircR} with $n=5$ and $S=\{\pm 1\}$.
Hence, as in the proof of that theorem, the polynomial \[p(z)=\prod_{j=2}^{3}(z-\w^j)=z^2+2\cos(\pi/5)z+1\] may be multiplied by $z^4$ to produce the ``shifted'' polynomial
\[
q(z)=z^4p(z)=z^6+2\cos(\pi/5)z^5+z^4,
\] whose normalized coefficient vector is
\[
x=({\textstyle \sqrt{ \scriptstyle 2\cos(\pi/5)\, }},1,0,0,1) \in \mathbb R^5.
\]
Note that $x$ is nonnegative and balanced, with support of size $3$.  Meanwhile, we have by construction that $q(\w^j)=0$ if and only if $j\in S$, so that $q$ satisfies condition \eqref{cond:balanced_poly_representation} of Theorem \ref{thm:TFAE_poly_R}.  
Hence, by Proposition \ref{prop:real_poly},
applying powers of $A_5$ to $x$  must give a cyclically symmetric orthogonal representation for $C_5$.
Stripping the third and fourth coordinates (which are necessarily zero) from the resulting vectors gives the representation shown in Figure \ref{fig:depiction_of_C5_orth_rep}.
  \end{ex}

Theorem \ref{thm:consCircR},
unlike the analogous Theorem \ref{thm:consCircC}, includes the additional hypothesis that $n$ is odd.  Example \ref{ex:4cycle_over_R} shows that
this
is necessary; that is,
the conclusion of Theorem \ref{thm:consCircR} does fail for some consecutive circulants of even order.  
As noted in that example, $C_4=\circulant{4}{\{\pm 1\}}$ is a graph for which the minimum rank
is achieved by a circulant matrix that is Hermitian and positive semidefinite, by a circulant matrix that is real and symmetric but not positive semidefinite, and by a non-circulant matrix that is real and positive semidefinite, but {\it not} by a circulant matrix that is real and positive semidefinite!
In other words,  $\mscr(G)=\mcrREAL(G)=\msrREAL(G)=2$, and yet $\mscrREAL(G)=3$.
We now show how the polynomial results of Theorems \ref{thm:TFAE_poly_C} and \ref{thm:TFAE_poly_R} can be used to give alternate proofs of these equalities, expanding on Examples \ref{ex:C4_over_C} and \ref{ex:4cycle_over_R} and providing explicit orthogonal representations in the process.

\begin{ex}\label{ex:4cycle_summary}
Consider the 4-cycle, $C_4=\circulant{4}{\{\pm 1\}}$.  Here, $n=4$ and $\w=i$.  The matrices $U_4$ and $A_4$ are given by
\[U_4=\begin{bmatrix}
1 & 0 & 0 & 0 \\
0 & i & 0 & 0 \\
0 & 0 & -1 & 0 \\
0 & 0 & 0 & -i
\end{bmatrix} \quad\quad \text{and} \quad\quad A_4=\begin{bmatrix}
1 & 0 & 0 & 0 \\
0 & 0 & 0 & -1 \\
0 & 0 & -1 & 0 \\
0 & 1 & 0 & 0
\end{bmatrix}.\]  It is easy to see that $\mr(C_4)=2$, which will provide a lower bound in what follows.
  
The polynomial $p(z)=z+1$ is zero at $\w^2=-1$ and has a nonzero real part at $\w^1$ and at $\w^{-1}$.  Hence, both of Theorems \ref{thm:TFAE_poly_C} and \ref{thm:TFAE_poly_R} may be applied. The normalized coefficient vector of $p$ is $x=(1,1,0,0) \in \mathbb{R}^4$.  This vector has support of size 2 and weight 3.  Hence, Propositions \ref{prop:polynomial_connection} and \ref{prop:real_poly_weight} give cyclically symmetric orthogonal representations of ranks $2$ and $3$, respectively.  Explicitly, these are the representations formed by
\begin{align*}
  x &= \begin{bmatrix}1\\1\\0\\0\end{bmatrix}, &
  U_4x &= \begin{bmatrix}1\\i\\0\\0\end{bmatrix}, &
  U_4^2x &=\begin{bmatrix}1\\-1\\0\\0\end{bmatrix}, &
  U_4^3x &= \begin{bmatrix}1\\-i\\0\\0\end{bmatrix}, \\
\intertext{and by}
    x&= \begin{bmatrix}1\\1\\0\\0\end{bmatrix},  &
  A_4x &= \begin{bmatrix}1\\0\\0\\1\end{bmatrix},  &
  A_4^2x&=\begin{bmatrix}1\\-1\\0\\0\end{bmatrix},  &
  A_4^3&=\begin{bmatrix}1\\0\\0\\-1\end{bmatrix}.
\end{align*}
That these representations have ranks $2$ and $3$, respectively, is in fact visibly the case.  It follows that
$\mscr(C_4)=2$ and $\mscrREAL(C_4)\leq 3$.
  
For a lower bound corresponding to the latter inequality, we argue as follows.  Suppose a real positive semidefinite circulant matrix with graph $C_4$ and rank $2$ exists.  Then, by Theorem \ref{thm:TFAE_poly_R}, there exists a polynomial $p \in \mathbb R_{\ge 0}[z]$ with $p(\w^2)=p(-1)=0$ whose normalized coefficient vector is balanced with support of size $2$.  We may assume that $\deg(p) \le 3$.  Then either $p$ has positive coefficients on the $z^2$ and constant terms, or positive coefficients on the $z^3$ and $z$ terms.  Discarding a factor of $z$ reduces the second case to the first, so we may take $p(z)=az^2+b$ for some positive $a,b \in \mathbb R$.  But then  $0=p(-1)=a+b$, a contradiction.  Hence, $\mscrREAL(C_4) > 2$.  Therefore, the orthogonal representation over $\mathbb R$ shown above is of minimum rank, and $\mscrREAL(C_4)=3$.
\end{ex}

Theorem \ref{thm:TFAE_poly_R} also enables us to return to the graph $\circulant{6}{\{\pm 2,3\}}$ considered in Examples \ref{ex:Complement_of_6_cycle_mcr} and \ref{ex:Complement_of_6_cycle_OR} and compute its remaining minimum rank parameters.

\begin{ex}\label{ex:Complement_of_6_cycle_poly}
Let $G=\circulant{6}{\{\pm 2,3\}}$, the complement of the 6-cycle
shown in Figure \ref{fig:circulant_graph_examples}.  Now $n=6$ so that $\w=e^{2\pi i/6}$.  Previous examples give $\msrREAL(G)=\mcrREAL(G)=3$.    The following argument shows that $\mscr(G)=\mscrREAL(G)=4$.
    
    First, consider the polynomial $p(z)=\frac{1}{2}z^4+\frac{1}{2}z^3+\frac{1}{2}z^2+1$.  It is easy to check that $p(\w^1)=p(\w^{-1})=0$, while $p(\w^j)\not= 0$ for $j\in\{\pm 2,3\}$.
Moreover, the normalized coefficient vector of $p$ is balanced with support of size $4$.  Hence, from Theorem \ref{thm:TFAE_poly_R}, we have $\mscr(G)\leq\mscrREAL(G)\leq 4$.
    
    Suppose for the sake of contradiction that $\mscr(G)=3$.  Then, by Theorem \ref{thm:TFAE_poly_C}, there is a polynomial $p \in \mathbb R_{\ge 0}[z]$ in which exactly three terms appear with $p(\w^1)=p(\w^{-1})=0$ and $p(\w^j)\not= 0$ for $j\in\{\pm 2,3\}$.  Without loss of generality, take $p(z)=az^\ell+bz^k+1$ with $1 \le k < \ell \le 5$ and $a,b>0$.  Now $p(\w)=a\w^\ell+b\w^k+1=0$, and hence $a\w^\ell+b\w^k$ gives $-1$ as a convex combination of two complex $6$th roots of unity.  Geometrically, it is easy to see that the only possibility is
that $a=b=1$, with $k=2$ and $\ell=4$.  That is, $p(z)=z^4+z^2+1$.  But this gives $p(\w^2)=0$, which is a contradiction.  Hence, $\mscr(G) \neq 3$, and so $\mscr(G)=\mscrREAL(G)=4$.
    \end{ex}

\section{Further directions}

Some natural questions remain unresolved.  For instance, Example \ref{ex:Complement_of_6_cycle_poly} shows that $\msr(G)$ and $\mscr(G)$ may differ.  The general problem of determining exactly when this occurs remains open, however.

\begin{q}
Is it possible to characterize the circulant graphs $G$ for which $\mscr(G)=\msr(G)$?
\end{q}

As noted in Section \ref{sec:intro}, when $G$ has $n$ vertices, the possibility of diagonal dominance implies that the maximum rank of a Hermitian (or real symmetric) matrix with graph $G$ is certainly $n$.  Clearly,
when $G$ is a circulant graph,
a matrix with
rank $n$
can be found that is positive semidefinite, or a circulant matrix, or both simultaneously.
It follows from \cite[Lemma 1.1]{barrett2007} that within the Hermitian matrices in general and also within the positive semidefinite matrices in particular, every rank in between the minimum and $n$ is achievable by some matrix with graph $G$.  Hence, it is natural to ask whether
this is the case
within the circulant matrices as well.

\begin{q}\label{q:circulant_north_lemma}
Given a circulant graph $G$, must there exist a Hermitian circulant matrix with rank $k$ and graph $G$ for every integer $k$ with $\mcr(G) \le k \le n$?  Must there exist such a matrix that is positive semidefinite with rank $k$ for every $k$ with $\mscr(G) \le k \le n$?
\end{q}

Note that Question \ref{q:circulant_north_lemma} has an affirmative answer when $G$ is a consecutive circulant.  In particular, the proof of Theorem \ref{thm:consCircC} used a polynomial from $\mathbb R_{\ge 0}[z]$ of degree $\mscr(G)-1$ in which the maximum possible number, namely $\mscr(G)$, of possible terms appear.  Multiplying this polynomial by successive powers of $(z+2)$ creates a polynomial with each larger number of terms while respecting the condition given in \eqref{eqn:poly_condition} of Theorem \ref{thm:TFAE_poly_C}, thereby inducing a positive semidefinite Hermitian circulant matrix with graph $G$ and rank $k$ for each $k$ with $\mscr(G) \le k \le n$.  (The maximum occurs when the degree of the polynomial reaches $n-1$.)

Of the twelve separations possible in the Hasse diagram of Figure \ref{fig:mr_32_flavors}, references are given that separate all but three of them.  It may interesting to find an explicit graph giving each of the remaining separations, if possible.

\begin{q}
Can an explicit circulant graph $G$ be found such that $\mr(G) \not= \mcr(G)$, $\mcr(G) \not= \mcrREAL(G)$, or $\mrREAL(G) \not= \mcrREAL(G)$?
\end{q}

\section*{Acknowledgments}

The present work resulted from a collaboration of the authors that began at the AMS Spring Central Sectional Meeting in 2013.  We thank Iowa State University for hosting this meeting.  Thanks are also due to David Speyer for suggesting the application of the theorem of Carath\'eodory in the proof of Lemma \ref{lem:poly_from_caratheodory}, and to Leslie Hogben for pointing out the relevant results of \cite{vanDoorn1986}.  The authors also thank the referee for an exceptionally careful reading and many helpful comments.

\bibliographystyle{plain}
\bibliography{minrankcirculants-arxiv}

\end{document}